 \newtheorem{theorem}{Theorem}[section]
 \newtheorem{cor}[theorem]{Corollary}
 \newtheorem{lemma}[theorem]{Lemma}
 \theoremstyle{definition}
 \theoremstyle{remark}
\numberwithin{equation}{section}
\DeclareMathOperator{\supp}{supp}
\renewcommand{\b}{\hm\cdot}
\begin{document}
\title[Extremal product-one free sequences and $|G|$-product-one free sequences]
{Extremal product-one free sequences and $|G|$-product-one free sequences of a metacyclic group}

\author[Y.K. Qu]{Yongke Qu}
\address{Department of Mathematics\\ Luoyang Normal University\\
LuoYang 471934, P.R.
CHINA}
\email{yongke1239@163.com}

\author[Y.L. Li]{Yuanlin Li*}
\address{Department of Mathematics and Statistics\\  Brock University\\ St. Catharines, ON L2S 3A1, Canada}
\email{yli@brock.ca}

\thanks{*Corresponding author: Yuanlin Li, E-mail: yli@brocku.ca}

\begin{abstract}
Let $G$ be a multiplicatively written finite group. We denote by $\mathsf E(G)$ the smallest integer $t$ such that every sequence of $t$ elements in $G$ contains a product-one subsequence of length $|G|$. In 1961, Erd\H{o}s, Ginzburg and Ziv proved that $\mathsf E(G)\leq 2|G|-1$ for every finite abelian group $G$ and this result is known as the Erd\H{o}s-Ginzburg-Ziv Theorem. In 2005, Zhuang and Gao conjectured that $\mathsf E(G)=\mathsf d(G)+|G|$ for every finite group, where $\mathsf d(G)$ is the small Davenport constant. Very recently, we confirmed this conjecture for the case when $G=\langle x, y| x^p=y^m=1, x^{-1}yx=y^r\rangle$ where $p$ is the smallest prime divisor of $|G|$ and $\mbox{gcd}(p(r-1), m)=1$. In this paper, we study the associated inverse problems on $\mathsf d(G)$ and $\mathsf E(G)$. Our main results characterize the structure of any product-one free sequence with extremal length $\mathsf d(G)$, and that of any $|G|$-product-one free sequence with extremal length $\mathsf E(G)-1$.
\end{abstract}

\date{}

\maketitle

\noindent {\footnotesize {\it Keywords}: Erd\H{o}s-Ginzburg-Ziv theorem; Product-one sequence; Davenport constant; Metacyclic group} \\
\noindent {\footnotesize {\it 2020 Mathematics Subject Classifications}: Primary 20D60, 11P70; Secondary 11B75}

\section{Introduction and main results}
Let $G$ be a finite group written multiplicatively and let $S=g_1\bm\cdot \ldots\bm\cdot g_{\ell}$ be a sequence over $G$ with length $\ell$. We use $$\pi(S)=\{g_{\tau (1)}\ldots g_{\tau (\ell)}: \tau \mbox { a permutation of } [1,\ell]\}\subseteq G$$ to denote the set of products of $S$. We say that $S$ is a {\sl product-one sequence} if $1\in\pi(S)$. We denote by $\mathsf d(G)$ the maximal integer $\ell$ such that there is a sequence of length $\ell$ over $G$ which has no nontrivial product-one subsequence. We denote by $\mathsf E(G)$ the smallest integer $t$ such that every sequence of $t$ elements in $G$ contains a product-one subsequence of length $|G|$. The problem of finding the precise value of the Davenport constant of a finite group and what is now known as the Erd\H{o}s-Ginzburg-Ziv Theorem (see below for details) have become the starting points of zero-sum theory. Since that time (dating back to the early 1960s), zero-sum theory has developed into a flourishing branch of additive and combinatorial number theory. In particular, two invariants $\mathsf d(G)$ and $\mathsf E(G)$ found wide attention for finite abelian groups (see \cite{Caro2,GG2006,GeK,GeR,Gr} for surveys). Although the main focus of $\mathsf d(G)$ and $\mathsf E(G)$ has been on abelian groups, research was never restricted to the abelian setting alone.

At the present time, two conjectures on $\mathsf E(G)$ can be found in the literature. The first one investigates the upper bound for $\mathsf E(G)$. In 1961, Erd\H{o}s, Ginzburg and Ziv \cite{EGZ1961} proved that $\mathsf E(G)\leq 2|G|-1$ for every finite abelian group $G$ and this result is now well known as the Erd\H{o}s-Ginzburg-Ziv Theorem. In 1976, Olson \cite{O} showed that $\mathsf E(G)\leq 2|G|-1$ holds for all finite groups. In 1984, Yuster and Peterson \cite{YP1984} showed that $\mathsf E(G)\leq 2|G|-2$ when $G$ is a non-cyclic solvable group. Later, Yuster \cite{YP1988} improved the result to $\mathsf E(G)\leq 2|G|-r$ provided that $|G|\geq 600((r-1)!)^2$. In 1996, Gao \cite{Gao1996} further improved the upper bound to $\mathsf E(G)\leq \frac{11|G|}{6}-1$. Recently, in 2010, Gao and Li \cite{GL2010} proved that $\mathsf E(G)\leq\frac{7|G|}{4}-1$ and  conjectured that $\mathsf E(G)\leq \frac{3|G|}{2}$ for all non-cyclic finite groups. We remark that $\frac{3|G|}{2}$ is the best possible upper bound. Most recently, Gao, Li, and Qu \cite{GLQ} proved that $\mathsf E(G)\leq \frac{3|G|}{2}$ holds for all non-cyclic groups of odd order.

The second conjecture proposed by Zhuang and Gao \cite{ZG2005} in 2005 describes a relationship between $\mathsf d(G)$ and $\mathsf E(G)$, which says $\mathsf E(G)=\mathsf d(G)+|G|$ holds for all finite groups. This relation has
been verified for all finite abelian groups (see \cite{Caro, G1996}) as well as for several types of non-abelian groups. These include: dihedral groups, dicyclic groups, $C_p\ltimes C_q$, $C_m\ltimes C_{mn}$ and all non-abelian nilpotent groups (see \cite{Bass2007, GL2008, Han2015, HZ2019,ZG2005} for details). Most recently in \cite{QL}, by computing the exact values of $\mathsf d(G)$ and $\mathsf E(G)$, we confirmed the above conjecture for a new class of metacyclic groups $G=C_p\ltimes C_m$, where $p$ is the smallest prime divisor of $|G|$ and $\mbox{gcd}(p(r-1), m)=1$.
\\

\noindent {\bf Theorem A.} \cite[Theorem 1.2]{QL} Let $G=\langle x, y| x^p=y^m=1, x^{-1}yx=y^r\rangle$, where $p$ is the smallest prime divisor of $|G|$ and $\mbox{gcd}(p(r-1), m)=1$. Then $\mathsf d(G)=m+p-2$ and $\mathsf E(G)=\mathsf d(G)+|G|=mp+m+p-2$.
\\

We remark that the investigation of this particular class of metacyclic groups is of special interest because it is closely related to solving the first conjecture. In fact, the main results and the key method developed in \cite{QL} have been used intensively  in \cite{GLQ} to show that $\mathsf E(G)\leq \frac{3|G|}{2}$ holds for all non-cyclic groups of odd order.

After solving the direct problem, which asks for the precise value of group invariants such as $\mathsf d(G)$ and $\mathsf E(G)$, it is natural to consider the associated inverse problem, which asks for determining the structure of extremal sequences. In particular, the associated inverse problem on $\mathsf d(G)$ (resp. $\mathsf E(G)$) seeks to determine the structure of product-one free (resp. $|G|$-product-one free) sequences of length $\mathsf d(G)$ (resp. $\mathsf E(G)-1$). Recently, Brochero Mart\'{\i}nez and Ribas solved the associated inverse problem on $\mathsf d(G)$ over dihedral groups, dicyclic groups, and a class of metacyclic groups $C_m\ltimes C_q$ (see \cite{BR2018,BR2019}). Oh and Zhong solved the associated inverse problem on $\mathsf E(G)$ over dihedral groups and dicyclic groups (see \cite{OZ}).
In this paper, we investigate the associated inverse problems on $\mathsf d(G)$ and $\mathsf E(G)$ over the above mentioned class of metacyclic groups $G=C_p\ltimes C_m$ and our main results are as follows.
\begin{theorem}\label{inversed(G)} Let $G=\langle x, y| x^p=y^m=1, x^{-1}yx=y^r\rangle$, where $p$ is the smallest prime divisor of $|G|$ and $\mbox{gcd}(p(r-1), m)=1$. Let $S$ be a sequence over $G$ with $|S|=\mathsf d(G)=m+p-2$. Then the following statements are equivalent.
\begin{enumerate}
\item[(a)] $S$ is product-one free.
\item[(b)] $S$ is of one of the following forms:
\begin{enumerate}
\item[(i)] If $G\not\cong C_2\ltimes C_3$ then $$S=(x^ay^{b_1})\bm\cdot\ldots\bm\cdot (x^ay^{b_{p-1}})\bm\cdot (y^c)^{[m-1]},$$ where $a\in[1,p-1]$, $b_i\in[0,m-1]$ for $i\in[1,p-1]$, $c\in[1,m-1]$ and $\mbox{gcd}(c,m)=1$.
\item[(ii)] If $G\cong C_2\ltimes C_3$ then $$S=x\bm\cdot xy\bm\cdot xy^2$$ or $$S= xy^{b}\bm\cdot (y^{c})^{[2]}$$ for $b\in [0,2]$ and $c\in[1,2]$.
\end{enumerate}
\end{enumerate}
\end{theorem}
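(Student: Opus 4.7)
The plan is to verify (b) $\Rightarrow$ (a) by direct calculation using the identity $y^b x^a = x^a y^{b r^a}$. For $S$ of form (i), any nonempty subsequence $V$ has $x$-exponent sum $ka \pmod p$, where $k \in [0, p-1]$ counts the $x^a y^{b_i}$ terms included in $V$; this vanishes only when $k = 0$, reducing $V$ to a nonempty subsequence of $(y^c)^{[m-1]}$ whose product $y^{jc}$ is nontrivial because $\gcd(c, m) = 1$ and $j \in [1, m-1]$. The exceptional case $G \cong C_2 \ltimes C_3 \cong S_3$ is dispatched by enumerating length-three sequences in $S_3$.

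For (a) $\Rightarrow$ (b), I decompose $S = T \bm\cdot S'$, where $T$ gathers the entries of $S$ lying in $\langle y\rangle$ and $S' = S T^{-1}$. Let $\phi \colon G \to G/\langle y\rangle \cong C_p$ be the natural projection, so $\phi(S') \subseteq C_p^*$. Since $T$ is product-one free in $\langle y\rangle \cong C_m$, the bound $|T| \le \mathsf{d}(C_m) = m - 1$ is automatic. The central step, assuming $G \not\cong C_2 \ltimes C_3$, is to show $|T| = m - 1$. Suppose instead $|T| \le m - 2$, so $|S'| \ge p$; since $\mathsf{d}(C_p) = p - 1$, there is a nonempty $U \subseteq S'$ with $\phi(U)$ summing to $0$, and hence $\pi(U) \subseteq \langle y\rangle$. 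The key tool will be a lifting lemma quantifying how reorderings of $U$ enlarge $\pi(U)$ inside $\langle y\rangle$: each adjacent swap $u_i u_{i+1} \leftrightarrow u_{i+1} u_i$ shifts the $y$-exponent of the product by $b_i(r^{a_{i+1}} - 1) - b_{i+1}(r^{a_i} - 1)$, and the hypothesis $\gcd(r - 1, m) = 1$ (so that $[G, G] = \langle y\rangle$) forces these shifts to generate a subgroup large enough that $|\pi(U)| + |\Sigma(T)| > m$, where $\Sigma(T)$ is the subproduct set of $T$ in $\langle y\rangle$. A pigeonhole count in $\langle y\rangle$ then yields $\pi(U) \cap \Sigma(T)^{-1} \ne \emptyset$, so some ordering of the subsequence $U \bm\cdot T'$ of $S$ produces the identity, contradicting (a). The counting degenerates precisely at $p = 2$, $m = 3$, $|T| = 0$, pinpointing $G \cong C_2 \ltimes C_3$, which I treat directly to recover the extra form $x \bm\cdot xy \bm\cdot xy^2$.

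Once $|T| = m - 1$ is in hand, the inverse theorem for $\mathsf{d}(C_m)$ forces $T = (y^c)^{[m-1]}$ with $\gcd(c, m) = 1$, hence $\Sigma(T) = \langle y\rangle$. If the $x$-exponents of $S'$ (a sequence of length $p - 1$ in $C_p^*$) are not all equal, the inverse theorem for $\mathsf{d}(C_p) = p - 1$ produces a proper nonempty $U \subseteq S'$ with $\phi(U)$ summing to $0$; for any $z \in \pi(U) \subseteq \langle y\rangle$ there is then $T' \subseteq T$ with $\pi(T') = z^{-1}$, yielding a product-one subsequence of $S$ and contradicting (a). Thus all $x$-exponents coincide with some $a \in [1, p-1]$, completing form (i). The main obstacle is making the lifting lemma precise: controlling $|\pi(U)|$ in the non-abelian product requires careful bookkeeping of iterated commutator shifts, and the sharpness exactly at $C_2 \ltimes C_3$ shows the bound is tight and that this small group genuinely must be excised from the general argument.
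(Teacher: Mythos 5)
Your direction (b)$\Rightarrow$(a) and the endgame of (a)$\Rightarrow$(b) (once $|S_N|=m-1$ is known, using the inverse results for $\mathsf d(C_m)$ and $\mathsf d(C_p)$ to pin down $(y^c)^{[m-1]}$ and a single coset $x^aN$) are correct and agree with the paper. The genuine gap is the central step: ruling out $|S_N|\le m-2$. Your ``lifting lemma'' is, in substance, the paper's Lemma~\ref{conjugationone}: for a block $U$ with $\varphi(U)$ minimal product-one and $1\notin\pi(U)$, reorderings give $\pi(U)\supseteq\{u,u^{r},\ldots,u^{r^{p-1}}\}$ restricted to $|U|$ distinct conjugates, so $|\pi(U)|\ge|U|$ --- these $p$ conjugates do \emph{not} generate a large subgroup inside $\pi(U)$, they are just $|U|\le p$ elements. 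Feeding this into the product-set count (Lemma~\ref{productonefree} plus $|\Pi(S_N)|\ge|S_N|$) yields only $|\Pi(S)\cap N|\ge (|S'|-(p-1))+|S_N|=m-1=|N\setminus\{1\}|$, which is \emph{not} a contradiction. Your claimed inequality $|\pi(U)|+|\Sigma(T)|>m$ is exactly what fails in the equality case, and that failure is not confined to $p=2,\,m=3,\,|T|=0$: a priori it can occur for every $(p,m)$, so the assertion that the counting ``degenerates precisely at $C_2\ltimes C_3$'' is unsupported and is where essentially all the work lies.

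The paper resolves this equality case by a substantial further analysis: equality forces $|W|=p-1$, $|\pi(T_i)|=|T_i|=p$, then (via Lemmas~\ref{normalsequence} and \ref{conjugationtwo}) $S_N=\emptyset$ and $S$ contained in a single nontrivial coset $x^aN$, and finally $|\Pi_p(T_0)|=p$ for \emph{every} subsequence $T_0$ of length $2p-1$. The key Lemma~\ref{2p-1}, proved with Kneser's theorem and the DeVos--Goddyn--Mohar generalization, then shows this last condition forces $p=2$ and $\Pi_2(T_0)=H\setminus\{1\}$ for a subgroup $H\le N$ of order $3$; for $m\ge 9$ two disjoint triples give $1\in\Pi_2(T_0)\Pi_2(T_0')\subseteq\Pi(S)$, leaving only $C_2\ltimes C_3$. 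None of this structure-forcing machinery appears in your outline, and without it (or a genuine substitute) the proof of (a)$\Rightarrow$(b) does not close.
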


\begin{theorem}\label{inverseE(G)} Let $G=\langle x, y| x^p=y^m=1, x^{-1}yx=y^r\rangle$, where $p$ is the smallest prime divisor of $|G|$ and $\mbox{gcd}(p(r-1), m)=1$. Let $S$ be a sequence over $G$ with $|S|=\mathsf E(G)-1=mp+m+p-3$. Then the following statements are equivalent.
\begin{enumerate}
\item[(a)] $S$ is $|G|$-product-one free.
\item[(b)] $S$ is of one of the following forms:
\begin{enumerate}
\item[(i)] If $G\not\cong C_2\ltimes C_3$, then $$S=(x^ay^{b_1})\bm\cdot\ldots\bm\cdot (x^ay^{b_{p-1}})\bm\cdot (y^{c_1})^{[k_1m-1]}\bm\cdot (y^{c_2})^{[k_2m-1]},$$ where $a\in[1,p-1]$, $b_i, c_1,c_2\in[0,m-1]$ for $i\in[1,p-1]$, $\mbox{gcd}(c_1-c_2,m)=1$ and $k_1+k_2=p+1$.
\item[(ii)] If $G\cong C_2\ltimes C_3$, then  $$S=x\bm\cdot xy \bm\cdot xy^{2}\bm\cdot  1^{[5]},$$ or $$S=(xy^{b})\bm\cdot (y^{c_1})^{[5]}\bm\cdot (y^{c_2})^{[2]},$$ where $b, c_1,c_2\in [0,2]$ with $\mbox{gcd}(c_1-c_2,3)=1$.
\end{enumerate}
\end{enumerate}
\end{theorem}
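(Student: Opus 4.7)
The direction $(b)\Rightarrow(a)$ should be a direct verification. In form (i), any product-one subsequence of length $|G|=mp$ must have total $x$-exponent $\equiv 0\pmod p$; since $S$ has only $p-1$ elements outside $\langle y\rangle$ (all with $x$-exponent $a$ coprime to $p$), such a subsequence must sit entirely inside the cyclic block $(y^{c_1})^{[k_1m-1]}\bm\cdot(y^{c_2})^{[k_2m-1]}$, and the conditions $\gcd(c_1-c_2,m)=1$ and $k_1+k_2=p+1$ preclude a length-$pm$ zero-sum of this block by a short multiplicity count in $C_m$. The sequences in (ii) are checked directly in $G\cong S_3$.

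For $(a)\Rightarrow(b)$, I plan to write $S=U\bm\cdot T$, where $T$ collects the $\langle y\rangle$-entries of $S$ and $U$ the rest. Iterating Erd\H{o}s--Ginzburg--Ziv in $C_m\cong\langle y\rangle$ gives the bound $|T|\le(p+1)m-2$: otherwise $T$ contains $p$ pairwise disjoint length-$m$ zero-sum subsequences whose concatenation is a length-$|G|$ product-one subsequence of $S$, contradicting (a). In particular $|U|\ge p-1$. The core technical step is to upgrade this to $|U|=p-1$ outside the $C_2\ltimes C_3$ exception. Assuming $|U|\ge p$ for contradiction, one first invokes EGZ on the $x$-exponents of $U$ in $C_p$ to extract a subsequence $U_0\le U$ of length $p$ whose image in $G/\langle y\rangle$ is trivial, and then analyses the set of products $\pi(U_0)\subseteq\langle y\rangle$ obtained by permuting the factors of $U_0$; the crucial technical input from \cite{QL}, which rests on the hypothesis $\gcd(p(r-1),m)=1$ (making each map $b\mapsto b(r^i-1)$ a bijection of $C_m$), shows that $\pi(U_0)$ is rich enough that, combined with a length-$(p-1)m$ zero-sum subsequence of $T$ extracted by further EGZ iterations, one builds a length-$|G|$ product-one subsequence of $S$, a contradiction.

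Once $|U|=p-1$, the sequence $T$ of length $(p+1)m-2$ has no zero-sum subsequence of length $pm$ in $C_m$ (any such zero-sum would be a length-$|G|$ product-one subsequence of $S$), and the classical inverse $k$-wise EGZ theorem for $C_m$ (of Bialostocki--Dierker type) pins down $T=(y^{c_1})^{[k_1m-1]}\bm\cdot(y^{c_2})^{[k_2m-1]}$ with $k_1+k_2=p+1$ and $\gcd(c_1-c_2,m)=1$. Appending to $U$ a sub-block of $T$ consisting of $m-1$ equal entries (which exists by the structure just obtained) produces a length-$\mathsf d(G)$ subsequence $W\le S$; a case analysis on the length of a putative product-one subsequence of $W$, again combined with length-$m$ zero-sum subsequences of the remaining part of $T$, shows $W$ is product-one free, so Theorem~\ref{inversed(G)} applies and forces $U=(x^ay^{b_1})\bm\cdot\ldots\bm\cdot(x^ay^{b_{p-1}})$ with a common $a\in[1,p-1]$. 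The main obstacle is the bound $|U|\le p-1$, which is a non-abelian analogue of a $k$-EGZ statement: the order of multiplication of the factors of $U_0$ materially affects the $y$-part of the product through the twist $x^{-1}yx=y^r$, so the analysis cannot be reduced to abelian zero-sum theory without the explicit computation of $\pi(U_0)$ described above. The exceptional case $G\cong C_2\ltimes C_3$ is handled by a direct finite classification, which produces the extra extremal family $x\bm\cdot xy\bm\cdot xy^2\bm\cdot 1^{[5]}$ (with $|U|=3$) that survives the main argument because $C_m=C_3$ is small enough to admit a degenerate inverse $k$-EGZ configuration.
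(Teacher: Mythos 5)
Your $(b)\Rightarrow(a)$ argument and the opening reduction of $(a)\Rightarrow(b)$ (the bound $|S_N|\le mp+m-2$ via iterated EGZ in $C_m$, and the identification of $S_N$ via the inverse result of Lemma~\ref{knzerosumfree}(ii)) coincide with the paper's. The core of the proof, however, is ruling out ``too many terms outside $N$,'' and there your sketch has two genuine gaps. First, from $|U|\ge p$ you cannot ``invoke EGZ on the $x$-exponents to extract $U_0\mid U$ of length $p$ with trivial image in $G/N$'': EGZ needs $2p-1$ terms to produce a length-$p$ zero-sum, and for $p\ge 3$ a sequence such as $\bar x^{[p-1]}\bm\cdot(\bar x^{2})^{[p-1]}$ of length $2p-2$ over $C_p\setminus\{0\}$ has no zero-sum subsequence of length $p$. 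All that is guaranteed is \emph{some} product-one subsequence of $\varphi(U)$, of length anywhere in $[2,p]$; one must therefore decompose $U$ into minimal product-one blocks of varying lengths over $G/N$ and control the product set of the whole system of blocks together with $S_N$. This is exactly where the paper's real work lies: Lemma~\ref{conjugationone} gives only $|\pi(U_0)|\ge|U_0|$, which is far from enough to hit the specific target needed to reach length $mp$, so the paper invokes the DeVos--Goddyn--Mohar generalization of Kneser's theorem (Lemma~\ref{genKneser}) through the bookkeeping of Lemmas~\ref{IM}--\ref{1inpiU} and the long case analyses of Lemmas~\ref{SNigeqp} and~\ref{SNi<p}. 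The degenerate configurations of those bounds (nontrivial stabilizer, $\mu\le1$, $|I_{\{1\}}|\in\{m-2,m-1\}$) are precisely where the exceptional sequence $x\bm\cdot xy\bm\cdot xy^{2}\bm\cdot 1^{[5]}$ emerges; in your framework $|U|=3\ge p$ should trigger your contradiction argument, and ``$C_3$ is small'' does not explain why it fails to.

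Second, your concluding step is broken: $W=U\bm\cdot(y^{c_i})^{[m-1]}$ need not be product-one free, because only $\gcd(c_1-c_2,m)=1$ is known. One of the $c_i$ may be $0$ (the legitimate extremal sequence with $c_1=0$ gives $W=U\bm\cdot 1^{[m-1]}$), and both may fail to be coprime to $m$ (e.g.\ $m=15$, $c_1=3$, $c_2=5$, where $(y^{3})^{[5]}$ is a product-one subsequence of $W$). So Theorem~\ref{inversed(G)} cannot be applied to $W$, and this route to forcing $U$ into a single coset collapses. The paper avoids the issue entirely: it proves that $\varphi(S\bm\cdot S_N^{[-1]})$ is product-one free over $G/N\cong C_p$, so $|S\bm\cdot S_N^{[-1]}|\le p-1$ and Lemma~\ref{n-1} applied to the quotient sequence immediately places $U$ in a single coset $x^aN$. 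In short, the skeleton of your plan is reasonable, but both of the steps that carry the actual difficulty do not go through as described.
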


This paper is organized in the following way. In Section $2$, we describe some basic notation and preliminary assertions. In Section $3$, we collect some special properties for the above mentioned metacyclic groups. Our first main result is then proved in Section $4$, which characterizes the structure of an extremal product-one free sequence of length $\mathsf d(G)$. Finally, in Section $5$, we prove our second main result, which characterizes the structure of a extremal $|G|$-product-one free sequence of length $\mathsf E(G)-1$.

\section{Preliminaries}

We follow the notation and conventions detailed in \cite{GG2013}.

For $a,b\in \mathbb{R}$, we set $[a,b]=\{x\in \mathbb{Z}: a\leq x\leq b\}$. For integers $m,n\in \mathbb{Z}$, we denote by $\mbox{gcd}(m,n)$ the greatest common divisor of $m$ and $n$.

Let $G$ be a finite multiplicative group. If $A$ and $B$ are subsets of $G$, we define the product-set as $AB=\{ab:a\in A,b\in B\}$. If $A \subseteq G$ is a nonempty subset, then denote by  $\langle A \rangle $  the subgroup  of $G$ generated by $A$.  Recall that by ``a {\sl sequence} over a group $G$," we mean a finite, unordered sequence where repetition of elements is allowed. We view sequences over $G$ as elements of the free abelian monoid $\mathcal{F}(G)$, denote multiplication in $\mathcal{F}(G)$ by the bold dot symbol $\bm\cdot$ rather than by juxtaposition, and use brackets for all exponentiation in $\mathcal{F}(G)$.

A sequence $S \in \mathcal F(G)$ can be written in the form $S= g_1  \bm \cdot g_2 \bm \cdot \ldots \bm\cdot g_{\ell},$ where $|S|= \ell$ is the {\it length} of $S$. For $g \in G$, let $\mathsf v_g(S) = |\{ i\in [1, \ell] : g_i =g \}|\,  $ denote the {\it multiplicity} of $g$ in $S$. A sequence $T \in \mathcal F(G)$ is called a {\it subsequence } of $S$ and is denoted by $T \mid S$ if  $\mathsf v_g(T) \le \mathsf v_g(S)$ for all $g\in G$. Denote by $S \bm\cdot T^ {[-1]}$  the subsequence of $S$ obtained by removing the terms of $T$ from $S$. For every subset $A$ of $G$, let $S_A$ denote the subsequence of $S$ consisting of all terms of $S$ contained in $A$.

If $S_1, S_2 \in \mathcal F(G)$, then $S_1 \bm\cdot S_2 \in \mathcal F(G)$ denotes the sequence satisfying $\mathsf v_g(S_1 \bm\cdot S_2) = \mathsf v_g(S_1 ) + \mathsf v_g( S_2)$ for all $g \in G$. For convenience, we  write
\begin{center}
 $g^{[k]} = \underbrace{g \bm\cdot \ldots \bm\cdot g}_{k} \in \mathcal F(G)\quad$
\end{center}
for $g \in G$ and $k \in \mathbb{N}_0$. Suppose $S= g_1 \b g_2 \bm \cdot \ldots \bm\cdot g_{\ell} \in \mathcal F(G)$. Let
$$\pi (S) = \{g_{\tau(1)}\ldots g_{\tau(\ell)}: \tau \mbox{ a permutation of } [1, \ell] \} \subseteq G$$
denote the {\it set of products} of $S$. Let
$$\Pi_n(S) = \cup _{T\mid S,\ |T| = n}\pi(T)$$
denote the {\it set of all $n$-products} of $S$. Let
$$\Pi(S) = \cup_{1 \le n \le \ell}\Pi_n(S)$$
denote the {\it set of all subsequence products} of $S$. A sequence $S$ is called
\begin{itemize}
\item[$\bullet$]  {\it product-one} if $1 \in \pi(S)$;
\item[$\bullet$] {\it product-one free} if $1\not\in \Pi(S)$;
\item[$\bullet$] {\it $|G|$-product-one free} if $1\not\in \Pi_{|G|}(S)$;
\item[$\bullet$] {\it minimal product-one } if $1\in\pi(S)$ and $S$ has no proper nontrivial product-one subsequence.
\end{itemize}

We now collect some necessary lemmas.

Let $\mathbf{A}=(A_1,\ldots, A_{\ell})$ be a sequence of finite subsets of $G$ and $k\leq \ell$. We define $$\Pi^{k}(\mathbf{A})=\{a_{i_1}\ldots a_{i_{k}}:1\leq i_1<\cdots<i_{k}\leq \ell\mbox{ and } a_{i_j}\in A_{i_j} \mbox{ for every } 1\leq j\leq k\},$$
and
$$\Pi(\mathbf{A})=\cup_{1\leq k\leq \ell}\Pi^{k}(\mathbf{A}).$$
Let $A$ be a subset of $G$ and $\mbox{stab}(A)=\{g\in G: gA=A\}$ its stabilizer. The following is the well known Kneser's Addition Theorem, and we direct the interested reader to \cite{GeK, Na} for a detailed proof.

\begin{lemma}\label{Kneser} (Kneser) Let $A_1, \ldots, A_r$ be finite nonempty subsets of an abelian group, and let $H=\mbox{stab}(A_1\ldots A_r)$. Then,
$$
|A_1\ldots A_r|\geq |A_1H|+\cdots +|A_rH|-(r-1)|H|.
$$
\end{lemma}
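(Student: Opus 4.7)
The plan is to prove the equivalence in both directions, with (a) $\Rightarrow$ (b) carrying the substantive content. Throughout I write $S = S' \bm\cdot S_y$, where $S_y = S \cap \langle y\rangle$ collects the terms lying in the commutator subgroup and $S'$ collects the remaining terms, whose images under the projection $\varphi\colon G\twoheadrightarrow G/\langle y\rangle \cong C_p$ are all nonzero.

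For (b) $\Rightarrow$ (a) I verify directly that $S$ has no length-$|G|$ product-one subsequence. In form (i), suppose $T\mid S$ with $|T|=pm$ and $1\in\pi(T)$. Applying $\varphi$ shows the $x$-exponents of the terms of $T\cap S'$ sum to $0\pmod p$; since those terms all have $x$-exponent $a\in[1,p-1]$, this forces $|T\cap S'|\equiv 0\pmod p$, so $T\cap S'=\emptyset$ as $|T\cap S'|\leq p-1$. Hence $T=(y^{c_1})^{[\alpha]}\bm\cdot (y^{c_2})^{[\beta]}$ with $\alpha+\beta=pm$ and $\alpha c_1+\beta c_2\equiv 0\pmod m$; using $\gcd(c_1-c_2,m)=1$ gives $m\mid\alpha$, and then $\alpha\leq k_1m-1$ together with $\beta\leq k_2m-1$ forces $\alpha+\beta\leq(k_1+k_2-2)m=(p-1)m<pm$, a contradiction. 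Form (ii) and the exceptional $C_2\ltimes C_3$ situation reduce to a direct enumeration.

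For (a) $\Rightarrow$ (b) the argument proceeds in four steps. \emph{Step 1.} Since $\mathsf s_{pm}(C_m)=(p+1)m-1$ (the length-$pm$ zero-sum constant for $C_m$, obtained by iterating EGZ), if $|S'|\leq p-2$ then $|S_y|\geq (p+1)m-1$, so $S_y\subseteq\langle y\rangle$ contains a length-$pm$ zero-sum, giving a $|G|$-product-one subsequence of $S$; hence $|S'|\geq p-1$. \emph{Step 2.} I show $|S'|\leq p-1$ by adapting the argument used in the proof of Theorem~A in \cite{QL}: if $|S'|\geq p$, use EGZ (when $|S'|\geq 2p-1$) or the fine structure of $\varphi(S')$ on $C_p^\ast$ (when $p\leq|S'|\leq 2p-2$) to extract a subsequence $W\mid S'$ with $\varphi(W)$ a zero-sum of length at most $p$, so $\pi(W)\subseteq\langle y\rangle$. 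The non-abelian product rule $y^jx^i=x^iy^{jr^i}$ together with the identity $1+r+\cdots+r^{p-1}\equiv 0\pmod m$ (a consequence of $\gcd(r-1,m)=1$ and $r^p\equiv 1\pmod m$) gives precise control over $\pi(W)$; combined with the set of values of the $y$-exponent sum as $U$ ranges over subsequences of $S_y$ of the appropriate length, and invoking Lemma~\ref{Kneser} in $\langle y\rangle\cong C_m$, one produces a length-$pm$ product-one subsequence of $S$, contradicting freeness. \emph{Step 3.} With $|S'|=p-1$ and $|S_y|=(p+1)m-2=\mathsf s_{pm}(C_m)-1$, the sequence $S_y$ admits no length-$pm$ zero-sum, so by the inverse theorem for $\mathsf s_{pm}(C_m)$ we have $S_y=(y^{c_1})^{[k_1m-1]}\bm\cdot (y^{c_2})^{[k_2m-1]}$ with $k_1+k_2=p+1$ and $\gcd(c_1-c_2,m)=1$. \emph{Step 4.} Finally, the $p-1$ terms of $S'$ share a common $x$-exponent $a\in[1,p-1]$: otherwise, by the inverse of $\mathsf d(C_p)=p-1$, $\varphi(S')$ admits a nonempty proper subsequence summing to $0$ in $C_p$, whose lift $B\subseteq S'$ satisfies $\pi(B)\subseteq\langle y\rangle$; since $\gcd(c_1-c_2,m)=1$ gives full additive control over the $y$-exponent sums of size-$(pm-|B|)$ subsequences of $S_y$, we can match any target in $\langle y\rangle$ and produce a length-$pm$ product-one subsequence of $S$, a contradiction. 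The exceptional $G\cong C_2\ltimes C_3$ case is handled by a separate direct analysis, parallel to Theorem~\ref{inversed(G)}(ii).

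The main obstacle is Step 2: the non-commutativity of $G$ precludes a direct abelian zero-sum argument, and standard EGZ on $C_p$ does not apply when $p\leq|S'|\leq 2p-2$, so the fine product-set analysis from \cite{QL}, tuned to the metacyclic relation $x^{-1}yx=y^r$, is essential. Once Step 2 is secured, Steps 3 and 4 combine the inverse theorems for $\mathsf s_{pm}(C_m)$ and $\mathsf d(C_p)$ on the two pieces $S_y$ and $\varphi(S')$ with Kneser's theorem to pin down the structure.
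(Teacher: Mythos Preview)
Your proposal does not address the stated lemma at all. The statement in question is Kneser's Addition Theorem, a classical result in additive combinatorics asserting a lower bound on $|A_1\cdots A_r|$ in terms of the stabilizer $H$ of the product set. The paper does not give a proof of this lemma; it simply records the statement and cites \cite{GeK, Na} for a proof. What you have written is instead a proof sketch for Theorem~\ref{inverseE(G)} (the characterization of extremal $|G|$-product-one free sequences), complete with the decomposition $S = S' \bm\cdot S_y$, the four-step analysis of $|S'|$, and the appeal to inverse theorems for $\mathsf s_{pm}(C_m)$ and $\mathsf d(C_p)$.

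There is nothing in your proposal that bears on Kneser's theorem: no analysis of stabilizers, no induction on $r$ or on $|H|$, no Dyson-type $e$-transform, and no reduction to the two-summand case. If the task is really to supply a proof of Lemma~\ref{Kneser}, you need to start over with the standard argument (see, e.g., Nathanson or Geroldinger--Halter-Koch): prove the case $r=2$ via the $e$-transform and induction on $\min\{|A_1|,|A_2|\}$, then extend to general $r$ by a straightforward induction using the fact that $H$ stabilizes each partial product $A_1\cdots A_j H$. If instead you intended to submit a proof of Theorem~\ref{inverseE(G)}, you have attached it to the wrong statement.
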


The following lemma is a generalization of Kneser's Addition Theorem which is crucial for proving our results.

\begin{lemma}\cite[Theorem 1.3]{DGM2009}\cite[Theorem 13.1]{Gr}\label{genKneser}
Let $\mathbf{A}=(A_1,\ldots, A_{\ell})$ be a sequence of finite subsets of an abelian group $G$, let $k\leq \ell$, and $H=\mbox{stab}(\Pi^{k}(\mathbf{A}))$. If $\Pi^{k}(\mathbf{A})$ is nonempty, then $$|\Pi^{k}(\mathbf{A})|\geq |H|\bigg(1-k+\sum_{Q\in G/H}min\big\{k,|\{i\in[1,\ell]:A_i\cap Q\neq \emptyset\}|\big\}\bigg).$$
\end{lemma}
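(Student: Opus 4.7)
The plan is to reduce to the case where the stabilizer is trivial, and then induct on $\ell - k$ with the classical Kneser theorem (Lemma~\ref{Kneser}) handling the base case. First, since $\Pi^{k}(\mathbf{A})$ is a union of $H$-cosets, I would pass to the quotient group $\bar G = G/H$. Let $\pi\colon G \to \bar G$ be the canonical map and set $\bar A_i = \pi(A_i)$. Then $\Pi^{k}(\bar{\mathbf{A}}) = \pi(\Pi^{k}(\mathbf{A}))$, so $|\Pi^{k}(\mathbf{A})| = |H|\cdot|\Pi^{k}(\bar{\mathbf{A}})|$, and a short argument (using that $\Pi^{k}(\mathbf{A}) = \pi^{-1}(\Pi^{k}(\bar{\mathbf{A}}))$) shows that $\mathrm{stab}(\Pi^{k}(\bar{\mathbf{A}}))$ is trivial in $\bar G$. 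The bijection $G/H \to \bar G$, $Q \mapsto \bar g$, identifies $|\{i : A_i \cap Q \neq \emptyset\}|$ with $|\{i : \bar g \in \bar A_i\}|$, so it suffices to prove the inequality in the case $\mathrm{stab}(\Pi^{k}(\mathbf{A})) = \{1\}$, i.e.\
$$|\Pi^{k}(\mathbf{A})| \;\geq\; 1 - k + \sum_{g \in G} \min\bigl\{k,\, |\{i \in [1,\ell] : g \in A_i\}|\bigr\}.$$

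Next, I would argue by induction on $\ell - k$. The base case $\ell = k$ gives $\Pi^{k}(\mathbf{A}) = A_1 \cdots A_{\ell}$, so Kneser's theorem with trivial stabilizer yields $|A_1 \cdots A_k| \geq |A_1| + \cdots + |A_k| - (k-1)$. Since only $k$ sets are present the cap $\min\{k,\cdot\}$ is automatic, and exchanging the order of summation gives $\sum_{g\in G} |\{i : g \in A_i\}| = \sum_i |A_i|$, so the two lower bounds coincide. For the inductive step $\ell > k$, I would remove a single carefully chosen set $A_{j_0}$, apply induction to $\mathbf{A}' = (A_i)_{i \neq j_0}$, and use the inclusions $\Pi^{k}(\mathbf{A}') \subseteq \Pi^{k}(\mathbf{A})$ together with $a\cdot\Pi^{k-1}(\mathbf{A}') \subseteq \Pi^{k}(\mathbf{A})$ for each $a \in A_{j_0}$ to produce additional elements of $\Pi^{k}(\mathbf{A})$ that account for the gain in the capped sum when $A_{j_0}$ is reintroduced.

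The main obstacle is the inductive step itself, because the stabilizer $H' = \mathrm{stab}(\Pi^{k}(\mathbf{A}'))$ can be nontrivial even when $H$ has been made trivial. The induction then yields a lower bound on $|\Pi^{k}(\mathbf{A}')|$ phrased in terms of $H'$-cosets, while the target quantity is phrased in terms of individual elements, and reconciling the two requires partitioning the analysis according to the $H'$-coset structure. The key technical point (and the heart of the DeVos--Goddyn--Mohar argument) is to choose $j_0$ so that either (i) the inductive bound already suffices on its own, or (ii) the elements of $A_{j_0}$ distributed across distinct $H'$-cosets generate exactly enough new $H'$-cosets in $\Pi^{k}(\mathbf{A})$ to offset the increment
$$\min\bigl\{k, |\{i \in [1,\ell] : g \in A_i\}|\bigr\} \;-\; \min\bigl\{k, |\{i \in [1,\ell]\setminus\{j_0\} : g \in A_i\}|\bigr\}$$
in the capped sum. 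Making this per-element bookkeeping tight, and invoking the triviality of $H$ to force the appropriate new cosets to materialize when $H'$ is nontrivial, is the most delicate part of the argument.
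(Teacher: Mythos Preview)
The paper does not prove this lemma at all: it is quoted as \cite[Theorem 1.3]{DGM2009} and \cite[Theorem 13.1]{Gr} and used as a black box, so there is no ``paper's own proof'' to compare against. Your sketch is essentially the outline of the DeVos--Goddyn--Mohar argument itself (pass to $G/H$ to make the stabilizer trivial, then induct on $\ell-k$ with Kneser's theorem as the base case), and you have correctly identified where the real difficulty lies: controlling the possibly nontrivial stabilizer $H'$ that appears after deleting a set $A_{j_0}$, and choosing $j_0$ so that the capped sum increases by at most the number of new $H'$-cosets produced.

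That said, what you have written is a plan rather than a proof. The passage ``a short argument \ldots shows that $\mathrm{stab}(\Pi^{k}(\bar{\mathbf{A}}))$ is trivial'' and especially the final paragraph (``choose $j_0$ so that either (i) \ldots or (ii) \ldots'') defer precisely the steps that carry all the content. In the actual DeVos--Goddyn--Mohar proof the choice of which set to delete and the coset-by-coset accounting are handled via a careful extremal argument, and the inequality is not recovered by a single clever choice of $j_0$ alone. If your goal is a self-contained proof, you would need to fill in this bookkeeping explicitly; if your goal is merely to cite the result, as the paper does, then no proof is required here.
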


\begin{lemma}\cite[Theorem 2.1(a)]{GGS2007}\label{n-1}
Let $G=\langle g\rangle$ be cyclic of order $n\geq 3$, and let $S$ be a product-one free sequence of length $|S|=n-1$. Then $S=(g^k)^{[n-1]}$ for some $k$ with $\mbox{gcd}(k, n)=1$.
\end{lemma}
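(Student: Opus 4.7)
I will work in additive notation, writing $G = \mathbb{Z}/n\mathbb{Z}$ and $S = a_1 \bm\cdot \ldots \bm\cdot a_{n-1}$ so that product-one freeness means $0 \notin \Pi(S)$. The goal is to show that $S = h^{[n-1]}$ for some $h$ generating $G$, which in the original multiplicative notation translates to $S = (g^k)^{[n-1]}$ with $\gcd(k,n)=1$. I break this into three steps: (i) $\Pi(S) = G \setminus \{0\}$, (ii) $S$ is constant, and (iii) the common value generates $G$.

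For (i), I apply Lemma~\ref{Kneser} to the two-element sets $A_i = \{0, a_i\}$ for $i \in [1,n-1]$, whose sumset equals $\Pi(S) \cup \{0\}$. Writing $H$ for the stabilizer and $s = |\{i : a_i \in H\}|$, Kneser's bound gives $|\Pi(S) \cup \{0\}| \geq |H|(n-s)$. If $H$ were a proper nontrivial subgroup of order $d$, the $s$ terms of $S$ lying in $H$ would form a zero-sum free sequence in $H \cong C_d$, forcing $s \leq d-1$; combined with $|H|(n-s) \leq |G| = n$, this yields $d \geq n$, contradicting $d < n$. Hence $H \in \{\{0\}, G\}$, and in both cases $|\Pi(S) \cup \{0\}| = n$, so $\Pi(S) = G \setminus \{0\}$.

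For (ii), fix $a_i \in \supp(S)$. An analogous application of Lemma~\ref{Kneser} (or Olson's classical bound $|\Pi(T)| \geq |T|$ for zero-sum free $T$) to $S^{(i)} := S \bm\cdot a_i^{[-1]}$ yields $|\Pi(S^{(i)})| \geq n-2$; since $0, -a_i \notin \Pi(S^{(i)})$ (any subsequence summing to $-a_i$ would, together with $a_i$, give a zero-sum in $S$), this forces $\Pi(S^{(i)}) = G \setminus \{0, -a_i\}$. Now assume for contradiction that $a_i \neq a_j$ both lie in $\supp(S)$, and repeat with $S^{(ij)} := S \bm\cdot a_i^{[-1]} \bm\cdot a_j^{[-1]}$: it is zero-sum free of length $n-3$, so $|\Pi(S^{(ij)})| \geq n-3$, yet $\Pi(S^{(ij)})$ must exclude each of $0, -a_i, -a_j, -a_i - a_j$ (adjoining a suitable subset of $\{a_i, a_j\}$ would otherwise create a zero-sum in $S$). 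When $a_j \neq -a_i$ these four elements are pairwise distinct, forcing $|\Pi(S^{(ij)})| \leq n-4$, a contradiction. Therefore $a_j = -a_i$ for every pair of distinct support elements, so $\supp(S) \subseteq \{h, -h\}$ for some $h$; having both $h$ and $-h$ in $S$ would yield an immediate zero-sum of length $2$, so $S = h^{[n-1]}$ is constant.

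For (iii), if $h$ had order $d < n$, then $h^{[d]}$ (a subsequence of $S$, since $d \leq n-1$) would sum to $0$, contradicting zero-sum freeness. Hence $\ord(h) = n$, i.e., $h$ generates $G$; in the multiplicative notation of the statement, $h$ corresponds to $g^k$ for some $k$ with $\gcd(k,n) = 1$. The main difficulty is the double-removal step in (ii): it requires combining the Olson-type lower bound $|\Pi(S^{(ij)})| \geq n-3$ (obtained via a further application of Lemma~\ref{Kneser}) with the four structural exclusions $\{0, -a_i, -a_j, -a_i - a_j\}$ imposed by zero-sum freeness of the enlarged sequences.
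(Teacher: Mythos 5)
Your proof is correct. Note first that the paper itself offers no proof of this lemma: it is quoted verbatim from Gao--Geroldinger--Schmid \cite[Theorem 2.1(a)]{GGS2007}, so there is no in-paper argument to compare against. Your self-contained argument is sound. The heart of it is step (ii): for distinct support elements $a_i\neq a_j$ (both nonzero, since $S$ is zero-sum free), the doubly shortened sequence $S^{(ij)}$ is zero-sum free of length $n-3$, so $|\Pi(S^{(ij)})|\geq n-3$ by the Olson-type bound (Corollary~\ref{zerofree} of the paper), while zero-sum freeness of $S$ excludes $0$, $-a_i$, $-a_j$, $-a_i-a_j$ from $\Pi(S^{(ij)})$; these four are pairwise distinct exactly when $a_j\neq -a_i$, giving $|\Pi(S^{(ij)})|\leq n-4$ and the desired contradiction. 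This also works at the boundary case $n=3$ (where the hypotheses $a_i\neq a_j$, both nonzero, $a_j\neq -a_i$ are already unsatisfiable in $C_3$), and the passage from $\supp(S)\subseteq\{h,-h\}$ to $|\supp(S)|=1$ and then to $\ord(h)=n$ is immediate. Two minor remarks: step (i) (that $\Pi(S)=G\setminus\{0\}$ via Kneser) and the single-removal computation $\Pi(S^{(i)})=G\setminus\{0,-a_i\}$ are never used in the rest of the argument and could be deleted; and for step (ii) you do not need Kneser at all, since Corollary~\ref{zerofree} already gives the lower bound you need. Compared with the inductive or multiplicity-based proofs common in the literature for this classical inverse result, your double-removal counting argument is a clean and elementary alternative.
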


\begin{lemma}\label{normalsequence}
Let $G$ be a finite group, $S$ be a sequence over $G$ and $T\mid S$ be a product-one subsequence with maximal length. Let $W=S\bm\cdot T^{[-1]}$. If $|T|=|S|-\mathsf d(G)$, then $\mathsf v_g(W)\geq 1$ for any $g\in G\setminus \{1\}$ with $\mathsf v_g(S)\geq 1$.
\end{lemma}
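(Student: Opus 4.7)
The plan is to argue by contradiction: assume that some $g\in G\setminus\{1\}$ occurs in $S$ but not in $W$, and construct a product-one subsequence of $S$ strictly longer than $T$. First observe that $W$ must itself be product-one free: any nonempty product-one $W''\mid W$ would let us form $T\bm\cdot W''$, which is product-one (since $\pi(T\bm\cdot W'')\supseteq \pi(T)\cdot\pi(W'')\ni 1\cdot 1$), contradicting the maximality of $|T|$. Hence $W$ is an extremal product-one free sequence of length exactly $\mathsf d(G)$.

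The crucial intermediate step is to establish that $\Pi(W)=G\setminus\{1\}$. The inclusion $1\notin\Pi(W)$ is just product-one freeness. For the reverse, given $h\in G\setminus\{1\}$, consider $W\bm\cdot h^{-1}$: its length is $\mathsf d(G)+1$, so by definition of $\mathsf d(G)$ it admits a product-one subsequence $U$. Since $W$ is product-one free, $U$ must contain the appended element $h^{-1}$, so we can write $U=h^{-1}\bm\cdot V$ with $\emptyset\neq V\mid W$. Cyclically shifting a product-one ordering of $U$ to place $h^{-1}$ first yields an ordering $\gamma$ of $V$ with $h^{-1}\cdot\gamma=1$, hence $\gamma=h$ and thus $h\in\pi(V)\subseteq\Pi(W)$.

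Now suppose $g\in G\setminus\{1\}$ satisfies $\mathsf v_g(S)\geq 1$ and $\mathsf v_g(W)=0$. Then $\mathsf v_g(T)=\mathsf v_g(S)\geq 1$, and $|T|\geq 2$ because $T$ is product-one and contains the non-identity element $g$. By the intermediate step, $g\in\Pi(W)$, so there is a nonempty $W'\mid W$ with $g\in\pi(W')$. If $|W'|=1$ then $W'=g$, forcing $g\in W$, a contradiction; hence $|W'|\geq 2$. Setting
\[
T^{*}\;=\;T\bm\cdot g^{[-1]}\bm\cdot W',
\]
we obtain a subsequence of $S$ of length $|T|-1+|W'|\geq |T|+1$. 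Cyclically shifting a product-one ordering of $T$ so that $g$ is first gives an ordering $\alpha$ of $T\bm\cdot g^{[-1]}$ with product $g^{-1}$; concatenating $\alpha$ with an ordering $\beta$ of $W'$ of product $g$ produces an ordering of $T^{*}$ with product $g^{-1}\cdot g=1$. Thus $T^{*}$ is a product-one subsequence of $S$ longer than $T$, contradicting the maximality of $|T|$.

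The main obstacle I anticipate is the intermediate claim $\Pi(W)=G\setminus\{1\}$, which is the non-abelian analog of the classical fact $\Sigma(W)=G\setminus\{0\}$ for extremal zero-sum free sequences in abelian groups. The proof above extends to the non-abelian setting essentially because the product-one property of a sequence is preserved under cyclic rotation of any witnessing ordering, which provides exactly the flexibility needed to move a specified element to the front and read off the remaining product.
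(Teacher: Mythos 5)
Your proof is correct and is essentially the argument the paper has in mind: the paper only cites the abelian case (Gao–Zhuang) and remarks that the same proof carries over, and your write-up supplies exactly that proof, with the cyclic-rotation trick correctly replacing commutativity in the two places it is needed (showing $\Pi(W)\supseteq G\setminus\{1\}$ and splicing $W'$ in place of $g$ in $T$).
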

The proof of this result for abelian groups can be found in \cite[Theorem 1.2]{GZ2006}, and the same proof works for all finite groups.

\begin{lemma}\cite[Lemma 2.4]{HZ2019}\label{knzerosumfree}
Let $G=\langle g\rangle$ be a cyclic group of order $n$ and let $S$ be a sequence over $G$.
\begin{enumerate}
\item[(i)] If $|S|=kn+n-1$, then $S$ contains a product-one subsequence $T$ of length $kn$;
\item[(ii)] If $|S|=kn+n-2$ and $S$ contains no product-one subsequence of length $kn$, then $S$ must be of the type $S=(g^{a})^{[k_1n-1]}\bm\cdot (g^b)^{[k_2n-1]}$, where $k_1+k_2=k+1$ and $\mbox{gcd}(a-b, n)=1$. Moreover, $\Pi_{kn-2}(S)=G$.
\end{enumerate}
\end{lemma}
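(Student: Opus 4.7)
The plan is to identify $G=\langle g\rangle$ with $\mathbb{Z}/n\mathbb{Z}$ additively, so ``product-one'' becomes ``zero-sum,'' and then handle Part~(i) by straightforward iteration of the Erd\H{o}s--Ginzburg--Ziv theorem and Part~(ii) by induction on $k$, using the classical inverse form of EGZ (Bovey--Erd\H{o}s--Niven) as the base case.

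For Part~(i) I would induct on $k$. The base case $k=1$ is exactly EGZ. For the inductive step, given $|S|=kn+n-1\ge 2n-1$, apply EGZ to extract a zero-sum subsequence $T$ of length $n$. Then $|S\bm\cdot T^{[-1]}|=(k-1)n+n-1$, so by induction it contains a zero-sum subsequence $T'$ of length $(k-1)n$; the concatenation $T\bm\cdot T'$ is the required zero-sum subsequence of length $kn$.

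For Part~(ii), the structural statement is proved by induction on $k$. The base $k=1$ asserts that any sequence of length $2n-2$ in $\mathbb{Z}_n$ with no $n$-term zero-sum has the form $(g^a)^{[n-1]}\bm\cdot (g^b)^{[n-1]}$ with $\gcd(a-b,n)=1$, which is the Bovey--Erd\H{o}s--Niven theorem. For the inductive step, suppose $|S|=(k+1)n-2$ has no $kn$-zero-sum. By EGZ there is an $n$-zero-sum $T_1\mid S$. The complement $S':=S\bm\cdot T_1^{[-1]}$ has length $kn-2=((k-1)+1)n-2$ and admits no $(k-1)n$-zero-sum, for otherwise it would combine with $T_1$ to produce a $kn$-zero-sum in $S$. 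Hence by induction
\[
S'=(g^{a})^{[k_1'n-1]}\bm\cdot (g^{b})^{[k_2'n-1]}, \qquad k_1'+k_2'=k,\ \gcd(a-b,n)=1.
\]
The key remaining step, which I expect to be the main obstacle, is showing that $T_1$ is itself supported in $\{g^a,g^b\}$, so $S$ inherits the two-element structure. I would attack this by a swap argument: if $T_1$ contained an element $g^c$ with $c\notin\{a,b\}$, then exchanging a term of $T_1$ for a compatible term of $S'$ (chosen to preserve the zero-sum condition using the fact that $\gcd(a-b,n)=1$) would produce a different $n$-zero-sum $T_1^{*}$ whose complement $S\bm\cdot T_1^{*[-1]}$ has support strictly larger than any two-element set, contradicting the inductive hypothesis. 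This forces $\mathrm{supp}(T_1)\subseteq\{g^a,g^b\}$, from which $S=(g^a)^{[k_1 n-1]}\bm\cdot (g^b)^{[k_2 n-1]}$ with $k_1+k_2=k+1$.

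For the ``moreover'' part, once the explicit form of $S$ is known, a length-$(kn-2)$ subsequence is $(g^a)^{[i]}\bm\cdot (g^b)^{[kn-2-i]}$ with product $(a-b)i+(kn-2)b\pmod{n}$, and a direct check of the admissible range of $i$ (which spans $n$ consecutive integers) combined with $\gcd(a-b,n)=1$ yields $\Pi_{kn-2}(S)=G$. The principal difficulty of the whole proof lies in the rigidity argument forcing the extracted EGZ subsequence to lie in the two-element support predicted by the induction hypothesis; everything else reduces to EGZ iteration or elementary arithmetic in $\mathbb{Z}_n$.
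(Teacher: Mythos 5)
The paper itself offers no proof of this lemma --- it is imported verbatim from \cite[Lemma 2.4]{HZ2019} --- so there is no in-paper argument to compare against; your proposal has to stand on its own. Part (i) by iterated EGZ is fine, and your architecture for part (ii) (induction on $k$ with the classical length-$(2n-2)$ inverse theorem as base case, extracting an $n$-term product-one subsequence $T_1$ and applying the inductive hypothesis to the complement $S'$) is sound and completable.

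The genuine gap is exactly the step you flagged, and the mechanism you propose for it does not work as described. Exchanging a single term $g^c$ of $T_1$ (with $c\notin\{a,b\}$) for a single term of $S'$ changes the product of $T_1$ by $g^{a-c}$ or $g^{b-c}$, neither of which is $1$, so no one-term swap can ``preserve the zero-sum condition''; the hypothesis $\gcd(a-b,n)=1$ cannot rescue this. Moreover, even given some new $n$-term product-one $T_1^{*}$, you would still owe an argument that its complement actually violates the inductive structure. The clean repair is a multi-term exchange that exhibits the forbidden $kn$-term product-one subsequence directly: delete $g^c$ from $T_1$, leaving $n-1$ terms with product $g^{-c}$, and complete with a $\big((k-1)n+1\big)$-term subsequence of $S'=(g^{a})^{[k_1'n-1]}\bm\cdot (g^{b})^{[k_2'n-1]}$. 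Writing such a completion with $i$ copies of $g^a$, the admissible $i$ form the interval $[(k_1'-1)n+2,\ k_1'n-1]$ of $n-2$ consecutive integers, and since $\gcd(a-b,n)=1$ the resulting products $g^{(a-b)i+b}$ are exactly $G\setminus\{g^a,g^b\}\ni g^c$; concatenation yields a $kn$-term product-one subsequence of $S$, a contradiction. This forces $\supp(T_1)\subseteq\{g^a,g^b\}$, and then the relation $j(a-b)\equiv 0\pmod{n}$ for $T_1=(g^a)^{[j]}\bm\cdot (g^b)^{[n-j]}$ forces $j\in\{0,n\}$, i.e.\ $T_1=(g^a)^{[n]}$ or $(g^b)^{[n]}$ --- a step you should not omit, since it is what makes the multiplicities in $S$ come out congruent to $-1$ modulo $n$. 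Finally, your verification of the ``moreover'' clause (the range of $i$ spanning $n$ consecutive integers) is only valid for $k\geq 2$: for $k=1$ one has $|\Pi_{n-2}((g^a)^{[n-1]}\bm\cdot (g^b)^{[n-1]})|=n-1$, so that clause of the lemma implicitly requires $k\geq 2$, which is how the paper uses it.
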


\begin{lemma}\cite[Lemma 2.2]{Na}\label{A+B>G}
Let $G$ be a finite abelian group, and let $A$ and $B$ be subsets of $G$ such that $|A|+|B|>|G|$. Then $AB=G$.
\end{lemma}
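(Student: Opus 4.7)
The plan is to give the standard pigeonhole argument that underlies all of additive/sumset theory. I would fix an arbitrary element $g\in G$ and aim to produce a representation $g=ab$ with $a\in A$ and $b\in B$. The trick that converts the product-set statement ``$g\in AB$'' into a statement about intersecting subsets of $G$ is to introduce the translate
$$gB^{-1}=\{gb^{-1}:b\in B\},$$
where $B^{-1}=\{b^{-1}:b\in B\}$. Since inversion and left multiplication by $g$ are both bijections of $G$, this translate has cardinality $|gB^{-1}|=|B|$.

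From here the argument is a two-line count. Both $A$ and $gB^{-1}$ sit inside $G$, and by hypothesis
$$|A|+|gB^{-1}|=|A|+|B|>|G|,$$
so the pigeonhole principle forces $A\cap gB^{-1}\neq\emptyset$. Picking any element $a$ in this intersection and writing $a=gb^{-1}$ with $b\in B$ gives $g=ab\in AB$. Because $g\in G$ was arbitrary, $AB=G$ follows.

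There is essentially no obstacle here: the result is the minimal pigeonhole statement underlying the theory of sumsets, and it is the degenerate $r=2$ case of Kneser's Addition Theorem (Lemma \ref{Kneser}), where the lower bound $|AB|\geq|A|+|B|-|H|$ combined with $|AB|\leq|G|$ already forces $|AB|=|G|$ regardless of the stabilizer $H$. The only modest design choice is selecting which translate of $B$ to intersect with $A$; writing the hypothesis additively as ``$g=a+b$'' makes it transparent that $A$ and $g-B$ is the natural pair, and the multiplicative version $A$ and $gB^{-1}$ above is the direct translation. Note also that commutativity is used only to guarantee that $ab$ (equivalently $gb^{-1}\cdot b$) simplifies to $g$ on the nose; in a non-abelian group the analogous conclusion would instead read $g\in A\cdot B\cup B\cdot A$, which is why the hypothesis ``$G$ abelian'' is needed in the statement.
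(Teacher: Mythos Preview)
Your argument is correct and is exactly the standard pigeonhole proof found in the cited reference \cite[Lemma 2.2]{Na}; the paper does not supply its own proof but simply quotes Nathanson. One small remark: your closing comment about commutativity is not quite right, since from $a=gb^{-1}$ one gets $ab=(gb^{-1})b=g$ using only associativity, so the same argument already yields $AB=G$ in an arbitrary finite group.
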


\begin{lemma}\cite[Lemma 4]{OW}\label{Kemperman}
Suppose $A$ and $B$ are finite subsets of an arbitrary group and $1\in A\cap B$. If $ab=1$ (for $a\in A, b\in B$) has no solution except $a=b=1$, then $|AB|\geq |A|+|B|-1$.
\end{lemma}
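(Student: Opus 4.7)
The plan is to prove each direction separately, with $(a)\Rightarrow(b)$ being the substantive part.

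\textbf{For $(b)\Rightarrow(a)$:} I would project $S$ to the abelianization $G/G'\cong C_p$, noting that $G'=\langle y\rangle$ since $\gcd(r-1,m)=1$ makes $y^{r-1}$ a generator of $\langle y\rangle$. In form (b)(i) the $p-1$ elements $x^ay^{b_i}$ all project to the same nonzero class $a\in C_p$, so a product-one subsequence of length $|G|=mp$ must involve a multiple of $p$ of them (since $p$ is prime and $a\neq 0$), hence none. Such a subsequence then lies entirely in the $y$-part, a sequence over $G'\cong C_m$ of length $(p+1)m-2$; by Lemma~\ref{knzerosumfree}(ii) this has no product-one subsequence of length $pm$. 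Case (b)(ii) is verified by the same projection argument together with a small direct case analysis.

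\textbf{For $(a)\Rightarrow(b)$:} write $S=S_A\bm\cdot S_B$, where $S_B$ consists of the terms lying in $\langle y\rangle$. The first task is to establish $|S_A|=p-1$ and $|S_B|=mp+m-2$. The upper bound $|S_B|\le mp+m-2$ is immediate: if $|S_B|\ge mp+m-1$, Lemma~\ref{knzerosumfree}(i) applied to $S_B$ over $C_m$ yields a product-one subsequence of length $mp$, contradicting the hypothesis. The matching bound $|S_A|\le p-1$ is derived by showing that a larger $S_A$ enables one to extract a subsequence $T_A\subseteq S_A$ whose projection to $C_p$ has zero sum (using EGZ or the Davenport constant of $C_p$), and then to complete it by a suitable $T_B\subseteq S_B$ of matching length so that $T_A\bm\cdot T_B$ admits a product-one arrangement in $G$; the flexibility to realize the identity comes from the conjugation action $y\mapsto y^{r^k}$ of $x^k$ on $G'$ together with the size of $S_B$.

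With $|S_B|=mp+m-2$ and no product-one subsequence of $S_B$ of length $pm$, Lemma~\ref{knzerosumfree}(ii) forces $S_B=(y^{c_1})^{[k_1m-1]}\bm\cdot(y^{c_2})^{[k_2m-1]}$ with $k_1+k_2=p+1$ and $\gcd(c_1-c_2,m)=1$. To pin down $S_A$, I would run the extraction argument in reverse: if any nonempty $T_A\subseteq S_A$ had zero $x$-sum modulo $p$, completing it by a length-$(mp-|T_A|)$ subsequence of the now explicit $S_B$ would produce a product-one subsequence of $S$ of length $mp$, a contradiction. Hence $\bar S_A$ is a product-one free sequence over $C_p$ of length $p-1=\mathsf d(C_p)$, and Lemma~\ref{n-1} (with the trivial check when $p=2$) forces $\bar S_A=a^{[p-1]}$ for some $a\in[1,p-1]$. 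Therefore $S_A=(x^ay^{b_1})\bm\cdot\cdots\bm\cdot(x^ay^{b_{p-1}})$, completing form (b)(i). The special case $G\cong C_2\ltimes C_3$ permits the alternative Davenport-extremal shape from Theorem~\ref{inversed(G)}(b)(ii) and is analyzed separately, yielding the additional form $x\bm\cdot xy\bm\cdot xy^2\bm\cdot 1^{[5]}$.

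\textbf{Main obstacle.} The delicate step is the completion in the non-abelian setting: given $T_A\subseteq S_A$ with zero projection to $C_p$ and a choice of $T_B\subseteq S_B$ of matching length, one must produce an arrangement of $T_A\bm\cdot T_B$ in $G$ with product equal to $1$. After moving every $G'$-term to the right via conjugation, the product becomes a fixed product of $T_A$ times a twisted $G'$-sum of the form $\sum\beta_i r^{k_i}\pmod m$; realizing the identity reduces to a weighted product-one condition over $C_m$, to be handled by exploiting the specific structure of $S_B$ and the freedom to choose the permutation of $T_A$ and the insertion positions of the $T_B$-terms, likely via a combination of Lemma~\ref{knzerosumfree} and Kneser-type sumset bounds (Lemma~\ref{Kneser} or Lemma~\ref{A+B>G}) in $C_m$.
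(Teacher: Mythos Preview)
Your proposal does not address the stated lemma at all. The statement in question is Lemma~\ref{Kemperman}, a classical inequality of Kemperman--Olson--White type: for finite subsets $A,B$ of an arbitrary group with $1\in A\cap B$ and no nontrivial solution to $ab=1$, one has $|AB|\ge |A|+|B|-1$. The paper does not prove this; it is quoted directly from \cite[Lemma~4]{OW} as a preliminary tool.

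What you have written is instead a proof sketch for Theorem~\ref{inverseE(G)}, the inverse problem for $\mathsf E(G)$. Nothing in your text---the decomposition $S=S_A\bm\cdot S_B$, the projection to $G/G'\cong C_p$, the appeal to Lemma~\ref{knzerosumfree}, the discussion of the exceptional case $G\cong C_2\ltimes C_3$---bears on the cardinality inequality $|AB|\ge |A|+|B|-1$. There are no sets $A,B$ with $1\in A\cap B$ in your argument, and there is no ``$(a)\Leftrightarrow(b)$'' structure in Lemma~\ref{Kemperman}. You have simply proved (or sketched a proof of) the wrong statement.

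If the intent was indeed to supply a proof of Lemma~\ref{Kemperman}, the standard argument is short: observe that $A\cup AB'\subseteq AB$ where $B'=B\setminus\{1\}$, and that $A\cap AB'=\emptyset$ since $a_1=a_2b$ with $b\ne 1$ would give $a_2^{-1}a_1=b$, hence $(a_1^{-1}a_2)b=1$ with $a_1^{-1}a_2\in A^{-1}A$; a small refinement (or the original transformation trick in \cite{OW}) yields the bound. None of this appears in your proposal.
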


\begin{lemma}\label{productonefree}
Let $\mathbf{A}=(A_1,A_2,\ldots,A_{\ell})$ be a sequence of subsets of a group $G$. If $1\notin \Pi(\mathbf{A})$, then $|\Pi(\mathbf{A})|\geq \sum_{i=1}^{\ell} |A_i|$.
\end{lemma}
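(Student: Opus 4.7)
The plan is to prove this by induction on $\ell$, the number of subsets in $\mathbf{A}$. The base case $\ell=1$ is immediate since $\Pi(\mathbf{A})=A_1$. In the inductive step, I will separate off the last set $A_\ell$ and reduce to the shorter sequence $\mathbf{A}'=(A_1,\ldots,A_{\ell-1})$ by invoking Kemperman's Lemma~\ref{Kemperman}, which is applicable because it is stated for arbitrary (not necessarily abelian) groups.

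Before starting the induction, I would note that without loss of generality every $A_i$ is nonempty: deleting an empty $A_i$ changes neither $\Pi(\mathbf{A})$ nor $\sum_{i=1}^{\ell}|A_i|$. For the inductive step, set $B=\Pi(\mathbf{A}')$ and $A=A_\ell$. By unpacking the definition of $\Pi^{k}(\mathbf{A})$ according to whether index $\ell$ is or is not among $i_1<\cdots<i_k$, one obtains the decomposition
\[
\Pi(\mathbf{A})=B\,\cup\, A\,\cup\, BA.
\]
Because $B\subseteq \Pi(\mathbf{A})$, $A\subseteq \Pi(\mathbf{A})$, and $BA\subseteq \Pi(\mathbf{A})$, the hypothesis $1\notin\Pi(\mathbf{A})$ delivers three separate facts: $1\notin B$, $1\notin A$, and $1\notin BA$. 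The inductive hypothesis applied to $\mathbf{A}'$ yields $|B|\geq \sum_{i=1}^{\ell-1}|A_i|$.

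The heart of the proof is the application of Kemperman's Lemma to the enlarged sets $B^{*}=B\cup\{1\}$ and $A^{*}=A\cup\{1\}$. Clearly $1\in B^{*}\cap A^{*}$; I need to check that $b^{*}a^{*}=1$ with $b^{*}\in B^{*}$, $a^{*}\in A^{*}$ forces $b^{*}=a^{*}=1$. If $b^{*}=1$ then $a^{*}=1$, and symmetrically; otherwise $b^{*}\in B$ and $a^{*}\in A$, so $b^{*}a^{*}\in BA$, contradicting $1\notin BA$. Hence Lemma~\ref{Kemperman} gives $|B^{*}A^{*}|\geq |B^{*}|+|A^{*}|-1=|B|+|A|+1$.

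To finish, I would observe that $B^{*}A^{*}=BA\cup B\cup A\cup\{1\}=\Pi(\mathbf{A})\cup\{1\}$, and since $1\notin\Pi(\mathbf{A})$ this union is disjoint, yielding $|\Pi(\mathbf{A})|+1=|B^{*}A^{*}|\geq |B|+|A|+1$. Combined with the inductive bound on $|B|$, this gives $|\Pi(\mathbf{A})|\geq |B|+|A_\ell|\geq\sum_{i=1}^{\ell}|A_i|$, completing the induction. The only nontrivial point is the verification of the hypothesis of Kemperman's Lemma, which requires simultaneously using all three consequences $1\notin B$, $1\notin A$, and $1\notin BA$ of the assumption $1\notin\Pi(\mathbf{A})$; the rest is bookkeeping.
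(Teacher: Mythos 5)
Your proof is correct and follows essentially the same route as the paper: induction on $\ell$, adjoining the identity to both $\Pi(A_1,\ldots,A_{\ell-1})$ and $A_\ell$, and applying Lemma~\ref{Kemperman} to these enlarged sets. The only difference is that you spell out the decomposition $\Pi(\mathbf{A})=B\cup A\cup BA$ and the nonemptiness reduction explicitly, which the paper leaves implicit.
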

\begin{proof}
We proceed by induction on $\ell$. If $\ell=1$, then the result holds trivially. Assume that the result is true for $\ell-1 (\geq 1)$. Next, we prove that the result is also true for $\ell$. Since $1\notin \Pi(\mathbf{A})$, we have $1\notin\Pi(A_1,A_2,\ldots,A_{\ell-1})$ and $1\notin A_{\ell}$. Let $$A'=\Pi(A_1,A_2,\ldots,A_{\ell-1})\cup \{1\}\mbox{ \ \ \ and \ \ \ }A_{\ell}'=A_{\ell}\cup\{1\}.$$
By the induction hypothesis, $|\Pi(A_1,A_2,\ldots,A_{\ell-1})|\geq \sum_{i=1}^{\ell-1} |A_i|$. Thus
$$|A'|\geq \sum_{i=1}^{\ell-1} |A_i|+1.$$
Since $1\notin \Pi(\mathbf{A})$, we conclude that $ab=1$ where $a\in A'$ and $b\in A_{\ell}'$, if and only if $a=b=1$. Clearly, $\Pi(\mathbf{A})=(A'A_{\ell}')\setminus \{1\}$. Therefore, by Lemma~\ref{Kemperman} we have
$$|\Pi(\mathbf{A})|=|A'A_{\ell}'|-1\geq |A'|+|A_{\ell}'|-2\geq \sum_{i=1}^{\ell} |A_i|.$$
This completes the proof.
\end{proof}

As a consequence of Lemma~\ref{productonefree}, we obtain the following result.

\begin{cor}\cite[Lemma 2.4]{GLP2014}\label{zerofree}
Let $S$ be a product-one free sequence over $G$. Then $|\Pi(S)|\geq |S|$.
\end{cor}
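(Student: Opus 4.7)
The plan is to obtain this corollary as an immediate specialization of Lemma~\ref{productonefree} to singleton subsets. Write $S = g_1 \bm\cdot g_2 \bm\cdot \ldots \bm\cdot g_\ell$ and form the sequence of one-element subsets $\mathbf{A} = (A_1, A_2, \ldots, A_\ell)$, where $A_i = \{g_i\}$ for each $i \in [1,\ell]$. The key observation is that every element of $\Pi(\mathbf{A})$, being a product of the form $g_{i_1} g_{i_2} \cdots g_{i_k}$ with $1 \le i_1 < i_2 < \cdots < i_k \le \ell$, arises as the product of some permutation of a subsequence of $S$, so
$$
\Pi(\mathbf{A}) \subseteq \Pi(S).
$$
Since $S$ is product-one free we have $1 \notin \Pi(S)$, and therefore $1 \notin \Pi(\mathbf{A})$. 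Lemma~\ref{productonefree} then yields
$$
|\Pi(S)| \;\geq\; |\Pi(\mathbf{A})| \;\geq\; \sum_{i=1}^{\ell} |A_i| \;=\; \ell \;=\; |S|,
$$
which is precisely the claimed bound.

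There is essentially no obstacle: the only point worth verifying carefully is the inclusion $\Pi(\mathbf{A}) \subseteq \Pi(S)$, which requires attention because the definition of $\Pi^{k}(\mathbf{A})$ uses products taken in the fixed increasing order of indices, whereas $\Pi_k(S)$ ranges over all permutations. The inclusion holds because taking a single ordering is a special case of allowing all orderings, so any element produced on the left is produced on the right. Once this is noted, the corollary reduces to a one-line application of Lemma~\ref{productonefree}.
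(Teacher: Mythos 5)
Your proof is correct and follows exactly the route the paper intends: the corollary is stated as an immediate consequence of Lemma~\ref{productonefree}, obtained by specializing to the singleton sets $A_i=\{g_i\}$ and using $\Pi(\mathbf{A})\subseteq\Pi(S)$. Your careful remark about the ordering convention in $\Pi^k(\mathbf{A})$ versus $\Pi_k(S)$ is a valid and welcome detail, but the argument is the same.
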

\section{Some Properties of the Group $G=\langle x, y| x^p=y^m=1, x^{-1}yx=y^r\rangle$}
From now on, let $G=\langle x, y| x^p=y^m=1, x^{-1}yx=y^r\rangle\cong C_p\ltimes C_m$, where $p$ is the smallest prime divisor of $|G|$, and $\mbox{gcd}(p(r-1), m)=1$. Let $K=\langle x\rangle$ and $N=\langle y\rangle$. Then $C_{m}\cong N\lhd G$ and $K\cong G/N\cong C_p$. Let $N_i=x^iN$ be the $i$th coset of $N$ in $G$ for $0\leq i\leq p-1$. Let $\varphi$ be the canonical homomorphism from $G$ onto $G/N$. Then for each sequence $T$ over $G$, $\varphi(T)$ is a sequence over $G/N$. Note that if $\varphi(T)$ is a product-one sequence over $G/N$, then $\pi(T)\subseteq N$ (because $\varphi(\pi(T))=\pi(\varphi(T))=1$). We first recall several useful lemmas whose proofs can be found in \cite{QL}.

\begin{lemma}\cite[Lemma 3.1]{QL}\label{basic} Let $G$ be as above. Let $M$ be any subgroup of $N=\langle y\rangle$, $u$ be an element of $N$ and $0\leq s<s'\leq p-1$. Then
\begin{itemize}
\item[(i)] If $u^{r^s}\in M$, then $\{u, u^{r},\ldots, u^{r^{p-1}}\}\subseteq M$.
\item[(ii)] If both $u^{r^s}$ and $u^{r^{s'}}$ are in the same coset of $M$, then $\{u, u^{r},\ldots, u^{r^{p-1}}\}\subseteq M$.
\item[(iii)] If $u\neq 1$, then $u^{r^s}\neq u^{r^{s'}}$.
\end{itemize}
\end{lemma}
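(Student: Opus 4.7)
The plan is to reduce all three parts to a single number-theoretic fact. Writing $M=\langle y^d\rangle$ with $d\mid m$ and $u=y^t$, statements (i)--(iii) translate into divisibility conditions on $t$ involving powers of $r$ modulo $d$ (or modulo $m$), since $u^{r^s}=y^{tr^s}$ as elements of the cyclic group $N$.

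First I would prove the following auxiliary claim, which is where the hypothesis $\mbox{gcd}(p(r-1),m)=1$ actually gets used: $\mbox{gcd}(r^e-1,m)=1$ for every $e\in[1,p-1]$. The iterated conjugation identity $y^{r^p}=x^{-p}yx^p=y$ shows $m\mid r^p-1$; in particular $\mbox{gcd}(r,m)=1$. Any prime $q$ dividing both $r^e-1$ and $m$ would have the order of $r$ modulo $q$ dividing both $e$ and $p$, hence dividing $\mbox{gcd}(e,p)=1$, so that $r\equiv 1\pmod q$. But then $q$ would divide $\mbox{gcd}(r-1,m)$, contradicting the hypothesis.

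Given the claim, the three statements should follow directly. For (i): $u^{r^s}\in M$ says $d\mid tr^s$, and since $\mbox{gcd}(r^s,d)=1$ (as $d\mid m$) we obtain $d\mid t$, so $u\in M$ and hence all the powers $u^{r^i}$ lie in $M$. For (ii): $u^{r^s}$ and $u^{r^{s'}}$ lying in the same coset of $M$ gives $d\mid tr^s(r^{s'-s}-1)$; applying the claim with $e=s'-s\in[1,p-1]$ yields $\mbox{gcd}(r^{s'-s}-1,d)=1$, and combined with $\mbox{gcd}(r^s,d)=1$ this forces $d\mid t$, reducing to case (i). For (iii): if $u\neq 1$ but $u^{r^s}=u^{r^{s'}}$, then $m\mid tr^s(r^{s'-s}-1)$, and the same coprimalities applied with $d=m$ force $m\mid t$, i.e., $u=1$, a contradiction.

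The main obstacle---and the only step that actually invokes the standing hypothesis---is the auxiliary claim above. Once it is in hand, everything else is routine divisibility arithmetic in the cyclic group $N$.
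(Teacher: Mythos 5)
Your proof is correct: the reduction of all three parts to the coprimality claim $\gcd(r^e-1,m)=1$ for $e\in[1,p-1]$ is exactly the intended route, and your proof of that claim (via the order of $r$ modulo a common prime divisor $q$ dividing $\gcd(e,p)=1$) is the same argument the paper itself uses to establish Lemma~\ref{mequiv1}(i). The remaining divisibility manipulations in the cyclic group $N$, using $\gcd(r,m)=1$ from $m\mid r^p-1$, are all sound.
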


\begin{lemma}\cite[Lemma 3.2]{QL}\label{conjugationone} Let $T=g_1\bm\cdot \ldots \bm\cdot g_t$ be a sequence over $G$ such that $\varphi (T)$ is a minimal product-one sequence over $G/N$. Then for any $u\in\pi(T)$, we have $\pi(T)\supseteq \{u^{r^{s_{1}}}, u^{r^{s_{2}}}, \ldots, u^{r^{s_{t}}}\}$ for some subset $\{s_1,\ldots, s_t\}\subseteq [0,p-1]$. Moreover, if $u\neq 1$, then $u^{r^{s_i}}\neq u^{r^{s_j}}$ for all $1\leq i<j\leq t\leq p$.
\end{lemma}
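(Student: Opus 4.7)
The plan is to reduce the claim to the cyclic symmetry of products in $G$, combined with the minimality of $\varphi(T)$ in $G/N \cong C_p$. Write $g_i = x^{a_i} y^{b_i}$ with $a_i \in [0, p-1]$. Since $\varphi(T)$ is product-one, every $u \in \pi(T)$ lies in $N = \langle y \rangle$. Iterating the defining relation $x^{-1} y x = y^r$ yields the commutation rule $x^{-a'} y^e x^{a'} = y^{e r^{a'}}$, so for any $g = x^{a'} y^{b'} \in G$ and $u = y^e \in N$,
$$g^{-1} u g = y^{-b'} x^{-a'} y^{e} x^{a'} y^{b'} = y^{e r^{a'}} = u^{r^{a'}},$$
using that $N$ is abelian.

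Now, given $u = g_{\sigma(1)} g_{\sigma(2)} \cdots g_{\sigma(t)} \in \pi(T)$ for some ordering $\sigma$, a single cyclic shift yields
$$g_{\sigma(2)} g_{\sigma(3)} \cdots g_{\sigma(t)} g_{\sigma(1)} = g_{\sigma(1)}^{-1} u\, g_{\sigma(1)} = u^{r^{a_{\sigma(1)}}} \in \pi(T),$$
and iterating $k$ times gives $u^{r^{s_k}} \in \pi(T)$, where $s_k \equiv a_{\sigma(1)} + \cdots + a_{\sigma(k)} \pmod{p}$ and $s_0 = 0$. The crux is that the $t$ residues $s_0, s_1, \ldots, s_{t-1}$ are pairwise distinct modulo $p$: if $s_i \equiv s_j \pmod{p}$ for some $0 \leq i < j \leq t-1$, then $\varphi(g_{\sigma(i+1)}) \bm\cdot \ldots \bm\cdot \varphi(g_{\sigma(j)})$ would be a proper nonempty subsequence of $\varphi(T)$ whose product is $\varphi(x)^{s_j - s_i} = 1$, contradicting the minimality of $\varphi(T)$. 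Hence, after reindexing, $\pi(T) \supseteq \{u^{r^{s_1}}, \ldots, u^{r^{s_t}}\}$ for some $t$-element subset $\{s_1, \ldots, s_t\} \subseteq [0, p-1]$, which is the first assertion.

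For the moreover clause, when $u \neq 1$, Lemma~\ref{basic}(iii) asserts that $u^{r^s} \neq u^{r^{s'}}$ for any two distinct $s, s' \in [0, p-1]$, so the $t$ elements $u^{r^{s_i}}$ are automatically pairwise distinct (and in particular $t \leq p$). I expect the main technical point to be the conjugation identity $g^{-1} u g = u^{r^{a'}}$, which relies crucially on $N$ being both normal and abelian; once this is in hand, both the inclusion (via cyclic shifts together with the minimality of $\varphi(T)$) and the moreover clause (via Lemma~\ref{basic}(iii)) follow without further difficulty.
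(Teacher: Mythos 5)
Your proof is correct, and it follows the same route as the source the paper cites for this lemma (the paper itself only quotes \cite[Lemma 3.2]{QL} without reproducing the argument): cyclic shifts of an ordering realizing $u$ give conjugates $u^{r^{s_k}}$ with $s_k$ the partial sums of the $\varphi$-images, minimality of $\varphi(T)$ forces these $t$ partial sums to be distinct modulo $p$, and Lemma~\ref{basic}~(iii) gives the distinctness of the resulting elements when $u\neq 1$. Nothing to add.
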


\begin{lemma}\cite[Lemma 3.3]{QL}\label{conjugationtwo}
Let $T_0$ be a sequence of $p$ elements in $N_i$ for some $i\in [1,p-1]$. For every $j\in [1,\ell]$, let $T_j$ be a sequence over $G$ such that $\pi(T_j)\cap N\neq \emptyset$, and let $u_j\in \pi(T_j)\cap N$. Then, for every $t\in[1,\ell]$, $\pi(T_0\bm\cdot T_1\bm\cdot\ldots\bm\cdot T_{t})$ contains the product set $\pi(T_0)\{u_1,u_1^{r},\ldots,u_1^{r^{p-1}}\}\ldots \{u_t,u_t^{r},\ldots,u_t^{r^{p-1}}\}$.
\end{lemma}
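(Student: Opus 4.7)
The plan is to show directly that every element of the stated product set is realized as a product of some permutation of $T_0\bm\cdot T_1\bm\cdot\ldots\bm\cdot T_t$. Fix $v\in\pi(T_0)$ and target exponents $a_1,\ldots,a_t\in[0,p-1]$; the aim is to realize $v\cdot u_1^{r^{a_1}}\cdots u_t^{r^{a_t}}$ as such a product. This target is well-defined because $\pi(T_0)\subseteq N$ (the $p$ factors of $T_0$ lie in $N_i$ and so multiply to $x^{ip}N=N$) and all factors $u_j^{r^{a_j}}$ lie in the abelian subgroup $N$.

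The main ingredient is the commutation rule derived from $x^{-1}yx=y^r$,
\[
y^b\cdot(x^iy^a)\;=\;(x^iy^a)\cdot y^{br^i},
\]
which says that transporting a single element of $N$ rightward past one element of $N_i$ conjugates its exponent by $r^i$, and hence past $k$ consecutive elements of $N_i$ by $r^{ik}$. Since $\gcd(i,p)=1$, the map $s\mapsto i(p-s)\pmod p$ is a bijection $[0,p-1]\to[0,p-1]$, so every conjugation exponent $r^a$ with $a\in[0,p-1]$ can be achieved by a suitable insertion slot.

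Now choose an ordering $g_{\tau(1)},\ldots,g_{\tau(p)}$ of the terms of $T_0$ with $v=g_{\tau(1)}\cdots g_{\tau(p)}$, and fix an ordering of each $T_j$ realizing the product $u_j$. For each $j\in[1,t]$, let $s_j\in[0,p-1]$ be the unique residue with $i(p-s_j)\equiv a_j\pmod p$, and insert the ordered copy of $T_j$ into the slot between $g_{\tau(s_j)}$ and $g_{\tau(s_j+1)}$; if several $T_j$'s collide in the same slot, list them consecutively there in any order. Transporting each inserted block $u_j$ rightward past the trailing $p-s_j$ elements of $N_i$, its contribution becomes $u_j^{r^{i(p-s_j)}}=u_j^{r^{a_j}}$, and these conjugated images all lie in $N$ and so commute with one another. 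The total product of the constructed permutation is therefore $v\cdot u_1^{r^{a_1}}\cdots u_t^{r^{a_t}}$, establishing the desired inclusion.

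The only point that warrants a moment's care is the case when several blocks share a slot: writing the combined $N$-valued block as $u_{j_1}u_{j_2}\cdots$ and moving it past the trailing $N_i$-factors yields $(u_{j_1}u_{j_2}\cdots)^{r^{ik}}=u_{j_1}^{r^{ik}}u_{j_2}^{r^{ik}}\cdots$, because $y^b\mapsto y^{br^{ik}}$ is a homomorphism on $N$. With this bookkeeping in place, no further tools are required; the argument rests entirely on the single commutation identity above together with $\gcd(i,p)=1$.
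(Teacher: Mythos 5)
Your proof is correct: the commutation identity $y^b(x^iy^a)=(x^iy^a)y^{br^i}$, the observation that $\gcd(i,p)=1$ makes every conjugation exponent $r^{a}$ reachable by choosing the insertion slot, and the careful handling of colliding blocks together give exactly the claimed inclusion. This is the natural insertion argument and matches the approach one expects for this statement; the paper itself does not reprove the lemma but defers to \cite[Lemma 3.3]{QL}, so your write-up supplies essentially the intended proof.
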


The next lemma provides some additional properties of $G$.
\begin{lemma}\label{mequiv1}
Let $G$ be as above. Then
\begin{itemize}
\item[(i)] $m\equiv1 \pmod p$ and $\mbox{gcd}(r^a-1,m)=1$ for every $p\nmid a$.
\item[(ii)] For every $1\neq h\in N$, the centralizer $C_G(h)\subseteq N$.
\item[(iii)] Every element $g\in G\setminus N$ has order $p$, i.e., $\mbox{ord}(g)=p$.
\end{itemize}
\end{lemma}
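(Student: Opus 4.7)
The three parts are linked: (i) is the main technical content, and (ii), (iii) follow by direct semidirect-product computations that use (i). Throughout one may assume $m>1$, since otherwise $G=\langle x\rangle$ is cyclic of order $p$ and the claims are trivial or vacuous.

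My first step for (i) is to observe that $r^p\equiv 1\pmod m$, which follows from $x^p=1$ via $y=x^{-p}yx^p=y^{r^p}$. For any prime $q\mid m$, the residue $r\bmod q$ then has order dividing $p$ in $(\mathbb{Z}/q)^\ast$; the order cannot be $1$, for otherwise $q\mid r-1$ would contradict $\gcd(r-1,m)=1$. Hence $r$ has order exactly $p$ modulo $q$, so $r^a\not\equiv 1\pmod q$ for every $a\in[1,p-1]$. Running this across all primes $q\mid m$ gives $\gcd(r^a-1,m)=1$ for such $a$, and hence for every $a$ with $p\nmid a$ since $r^a\bmod m$ only depends on $a\bmod p$. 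To derive $m\equiv 1\pmod p$, I will consider the automorphism $\sigma\colon y\mapsto y^r$ of $N$: it has order dividing $p$, it is non-trivial (as $\gcd(r-1,m)=1$ and $m>1$), and, by the case $a=1$ just established, it fixes only $1\in N$. Thus $\sigma$ has order exactly $p$ and every $\sigma$-orbit on $N\setminus\{1\}$ has size $p$, forcing $p\mid m-1$.

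For (ii), write an arbitrary element of $C_G(h)$ (with $h=y^c\neq 1$) as $g=x^iy^j$ with $i\in[0,p-1]$. The defining relation $x^{-1}yx=y^r$ yields, by a routine induction, the commutation rule $y^c x^i=x^i y^{c r^i}$. Equating $gh$ and $hg$ forces $y^{c(r^i-1)}=1$, i.e., $m\mid c(r^i-1)$. If $i\in[1,p-1]$, part (i) gives $\gcd(r^i-1,m)=1$, hence $m\mid c$, contradicting $c\in[1,m-1]$; therefore $i=0$ and $g\in N$. For (iii), I will write $g=x^iy^j$ with $i\in[1,p-1]$ and prove by induction on $k$ (using the same commutation rule) that
$$g^k=x^{ki}\,y^{j(1+r^i+r^{2i}+\cdots+r^{(k-1)i})}.$$
Setting $k=p$ and using $x^p=1$, it suffices to show $S:=1+r^i+\cdots+r^{(p-1)i}\equiv 0\pmod m$, which follows from $(r^i-1)S=r^{pi}-1\equiv 0\pmod m$ together with $\gcd(r^i-1,m)=1$ from (i). Then $g^p=1$, and since $p$ is prime and $g\neq 1$, $\ord(g)=p$.

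The only real obstacle lies inside (i): turning the two hypotheses $\gcd(r-1,m)=1$ and $r^p\equiv 1\pmod m$ into the stronger statement that $r$ has order exactly $p$ modulo every prime divisor of $m$. Once that is in hand, the rest is a direct calculation in the semidirect product, and parts (ii) and (iii) slot in as immediate applications of the coprimality conclusion of (i).
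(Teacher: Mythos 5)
Your proof is correct and, for the coprimality statement in (i) and for parts (ii) and (iii), follows essentially the same route as the paper: the order of $r$ modulo each prime divisor of $m$ is forced to be exactly $p$, the centralizer computation reduces to $m\mid c(r^i-1)$, and the order computation reduces to $m\mid 1+r^i+\cdots+r^{(p-1)i}$ via $\gcd(r^i-1,m)=1$. The one genuine difference is that for $m\equiv 1\pmod p$ the paper simply cites a remark in an earlier reference, whereas you supply a self-contained argument by observing that $y\mapsto y^r$ is an order-$p$ automorphism of $N$ fixing only the identity, so its orbits partition $N\setminus\{1\}$ into blocks of size $p$; this is a correct and pleasantly elementary way to close that gap.
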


\begin{proof} (i) Proof of $m\equiv1 \pmod p$ can be found in \cite[Remark 3.4]{QL}. Now, we prove that $\mbox{gcd}(r^a-1,m)=1$ for every $p\nmid a$. Assume to the contrary that $\mbox{gcd}(r^{a}-1,m)\neq 1$, then there exists a prime divisor $q$ of $m$ such that $r^{a}-1\equiv 0 \pmod q$. Since $r^{p}\equiv1 \pmod m$, we have $r^{p}\equiv1 \pmod q$. Since $\mbox{gcd}(r-1,m)=1$, we have $r\not\equiv1 \pmod q$. Therefore, $r$ has order $p$ modulo $q$. Thus $p|a$, yielding a contradiction. Hence, we must have $\mbox{gcd}(r^{a}-1,m)=1$.

(ii) Let $h=y^{b_1}$ and $g=x^{a}y^{b_2}$ where $a\in [0,p-1]$, $b_1\in [1,m-1]$ and $b_2\in [0,m-1]$. Let $[h,g](\triangleq h^{-1}g^{-1}hg)$ be the commutator of $h$ and $g$. Then $[h,g]=[y^{b_1},x^{a}y^{b_2}]=[y^{b_1},x^{a}]=y^{b_1(r^{a}-1)}=h^{r^{a}-1}$. If $g\in C_G(h)$, then $[h,g]=1$. Thus, $h^{r^{a}-1}=1$. If $a\neq 0$, then by (i), $\mbox{gcd}(r^{a}-1,m)=1$ and thus $h^{r^{a}-1}\neq 1$, yielding a contradiction. Therefore, $a=0$ and thus $g\in N$.

(iii) Since $g\in G\setminus N$, without loss of generality we may assume that $g=x^{-a}h$ for some $a\in[1,p-1]$ and $h\in N$. Thus $g^{p(r-1)}=((x^{-a}hx^a)(x^{-2a}hx^{2a})\ldots (x^{-ap}hx^{ap}))^{r-1}=(h^{r^a+r^{2a}+\ldots +r^{a(p-1)}+1})^{r-1}=h^{r^{ap}-1}=1$. Since $(g^p)^m=1$ and $\mbox{gcd}(m,r-1)=1$, we get $g^p=1$, implying that $\mbox{ord}(g)=p$ as desired. This completes the proof of the lemma.
\end{proof}

The following lemma characterizes a minimal product-one sequence over $G\setminus N$.

\begin{lemma}\label{pit=1}
Let $T$ be a minimal product-one sequence over $G$ of length $p$ with elements in $G\setminus N$ such that $\pi(T)=\{1\}$. Then $T=g^{[p]}$ for some $g\in G\setminus N$.
\end{lemma}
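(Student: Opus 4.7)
The plan is to derive pairwise commutativity of the terms of $T$ from the hypothesis $\pi(T) = \{1\}$, then use this together with the order and arithmetic of $G$ to confine every term to a common cyclic subgroup of order $p$, and finally invoke Lemma~\ref{n-1}.

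Write $T = g_1 \bm\cdot \cdots \bm\cdot g_p$. For any two distinct indices $j, k \in [1,p]$, choose an ordering of the terms of $T$ in which $g_j$ and $g_k$ occupy adjacent positions. Since $\pi(T) = \{1\}$, both the product of this ordering and the product after swapping those two adjacent positions equal $1$; cancelling the common prefix and suffix yields $g_j g_k = g_k g_j$. Hence the subgroup $H = \langle g_1, \ldots, g_p \rangle$ is abelian. By Lemma~\ref{mequiv1}(iii) each $g_i$ has order $p$, so $H$ is an elementary abelian $p$-group. But $|G| = pm$ with $\mbox{gcd}(p, m) = 1$ by Lemma~\ref{mequiv1}(i) (because $m \equiv 1 \pmod p$), so $G$ has no subgroup of order $p^2$; consequently $|H| = p$ and $H$ is cyclic of order $p$.

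It remains to determine $T$ inside $H \cong C_p$, where every term of $T$ lies in $H \setminus \{1\}$ since $g_i \in G \setminus N$ implies $g_i \neq 1$. For $p \geq 3$, removing any single term of $T$ produces a product-one free sequence in $H$ of length $p - 1$ (product-one freeness being preserved since $T$ is minimal product-one); by Lemma~\ref{n-1} this truncation equals $(g^k)^{[p-1]}$ for some $k$ with $\mbox{gcd}(k, p) = 1$, and the removed term must therefore equal $(g^k)^{-(p-1)} = g^k$ in $H$, giving $T = (g^k)^{[p]}$. When $p = 2$, $T = g_1 \bm\cdot g_2$ with $g_1 g_2 = 1$ and both $g_i$ of order $2$, so $g_1 = g_2^{-1} = g_2$ and again $T = g_1^{[2]}$. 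Thus $T = g^{[p]}$ for some $g \in G \setminus N$, as required. I expect the main obstacle to be spotting the swap argument in the first step — namely the observation that the very strong condition $\pi(T) = \{1\}$ can be unpacked into pairwise commutativity; once that is in hand, the coprimality $\mbox{gcd}(p, m) = 1$ immediately reduces the problem to the cyclic group $C_p$, where Lemma~\ref{n-1} finishes it.
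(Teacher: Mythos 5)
Your proof is correct, and while the opening swap argument and the closing appeal to Lemma~\ref{n-1} coincide with the paper's proof, your middle step takes a genuinely different route. The paper, having obtained $[g_1,g_i]=1$, writes $g_i=g_1^a h$ with $h\in N$ and uses the centralizer property of Lemma~\ref{mequiv1}~(ii) (namely $C_G(h)\subseteq N$ for $1\neq h\in N$) to force $h=1$, placing every $g_i$ directly in $\langle g_1\rangle$. You instead observe that the pairwise-commuting terms generate an abelian subgroup $H$, that each term has order $p$ by Lemma~\ref{mequiv1}~(iii), so $H$ is elementary abelian, and that $p^2\nmid |G|=pm$ (since $m\equiv 1\pmod p$) forces $|H|=p$; this replaces the commutator computation with Lagrange's theorem and is arguably the more elementary argument, though both ultimately rest on the arithmetic condition $\gcd(r^a-1,m)=1$. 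A further small point in your favor: Lemma~\ref{n-1} is stated only for cyclic groups of order $n\geq 3$, and you treat $p=2$ separately by hand, whereas the paper invokes that lemma uniformly; your explicit truncate-and-reinsert step for $p\geq 3$ (remove one term, apply Lemma~\ref{n-1} to the resulting product-one free sequence of length $p-1$, then solve for the removed term) also spells out a deduction the paper leaves implicit.
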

\begin{proof}
Let $T=g_1\bm\cdot \ldots \bm\cdot g_p$ where $g_i\in G\setminus N$ for all $i\in [1,p]$. Since $g_1 \ldots  g_p\in \pi(T)$, $g_2 g_1 g_3 \ldots  g_p\in \pi(T)$ and $\pi(T)=\{1\}$, we obtain $g_1 \ldots  g_p=g_2 g_1  g_3 \ldots g_p$, whence $g_1 g_2= g_2 g_1$, so $[g_1, g_2]=1$. We next show that $g_2\in K_1=\langle g_1\rangle$. Since $g_1\notin N$, we have $G=K_1N$. Let $g_2=g_1^ah$ for some $a\in[1,p-1]$ and $h\in N$. Since $1=[g_1,g_2]=[g_1,g_1^{a}h]=[g_1,h]$, we get $g_1\in C_h(G)$. It follows from Lemma~\ref{mequiv1}~(ii) that $h=1$ (as $g_1\notin N$), so $g_2=g_1^a\in K_1$. Similarly, we obtain that $[g_1, g_i]=1$ for all $i\in [1,p]$, and thus $g_i\in K_1$ for all $i\in[1,p]$. Thus $T$ is a minimal product-one sequence of length $p$ over $K_1$. Since $K_1\cong C_p$, it follows from Lemma~\ref{n-1} that $T=g^{[p]}$ for some $g\in K_1\setminus \{1\}\subseteq G\setminus N$. This completes the proof.
\end{proof}
\section{Extremal Product-One Free Sequences}
In this section, we will characterize the structure of an extremal product-one free sequence of length $\mathsf d(G)$ and provide a proof for Theorem~\ref{inversed(G)}. We first prove the following crucial lemma.

\begin{lemma}\label{2p-1}
Let $T=g_1\bm\cdot\ldots \bm\cdot g_{\ell}$ be a sequence over $N_a=x^aN$ for some $a\in [1,p-1]$ with length $\ell=2p-1$. If $|\Pi_p(T)|= p$, then $p=2$ and there exists a subgroup $H$ of $N$ with $|H|=3$ such that $\Pi_p(T)=H\setminus \{1\}$.
\end{lemma}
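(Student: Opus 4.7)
The plan is to work in exponent coordinates: write $g_i=x^ay^{b_i}$, set $s=r^a$, and compute
\[
g_{i_1}\cdots g_{i_p}=y^{\,b_{i_1}s^{p-1}+b_{i_2}s^{p-2}+\cdots+b_{i_p}};
\]
cyclic rotation of the ordering multiplies this exponent by $s$, so $\Pi_p(T)\subseteq N$ is a union of $\langle s\rangle$-orbits. By Lemma~\ref{mequiv1}(i), $s$ has order $p$ modulo $m$ and $s^j-1$ is a unit for each $1\le j\le p-1$; as a consequence $p\nmid m$ and every nontrivial $\langle s\rangle$-orbit in $N$ has size exactly $p$.

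First I would rule out $1\in\Pi_p(T)$. If $1\in\pi(T_I)$ for some $p$-subsequence $T_I$, Lemma~\ref{pit=1} forces $T_I=g^{[p]}$ and $\pi(T_I)=\{1\}$; combining with any other nontrivial $p$-subproduct then gives $|\Pi_p(T)|\ge 1+p>p$, while the alternative $\Pi_p(T)=\{1\}$ has size $1$. Both contradict $|\Pi_p(T)|=p$. Hence $\Pi_p(T)$ is a single nontrivial orbit $O$ of size $p$, no element of $T$ has multiplicity $\ge p$, and $T$ has at least three distinct elements.

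The main body of the proof is a case analysis on the multiplicity pattern of $T$, supported by two computational ingredients. The first is a \emph{swap} argument: for any $(p-1)$-subset $J$ with a fixed ordering, each remaining index $i\notin J$ placed at position $0$ yields $b_is^{p-1}+C_J\in O$, so $b_i\in O-sC_J$; transposing two elements of $J$ shifts $C_J$ by a nonzero multiple of $b_{j_1}-b_{j_2}$, and since $\mbox{stab}(O)=\{0\}$ (there is no additive subgroup of order $p$ in $\mathbb Z/m\mathbb Z$), the two resulting translates of $O$ share at most $p-1$ elements, forcing the $p$ remaining $b$-values to fail being all distinct. The second is that, when some $g_a$ has multiplicity $\ge p-1$, the identity $\sum_{\ell=0}^{p-1}s^\ell=0$ gives
\[
\pi\bigl(g_a^{[p-1]}\cdot g_b\bigr)=\bigl\{(b_b-b_a)s^\ell:\ell\in[0,p-1]\bigr\},
\]
so $b_b-b_a\in O$; if two distinct elements both have multiplicity $\ge p-1$, then symmetrically $b_a-b_b\in O$, which forces $-1\in\langle s\rangle$ and contradicts the order-$p$ action of $s$ whenever $p\ge3$ and $m>2$. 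Intermediate multiplicity patterns are handled by analogous computations on subsequences of the form $g_a^{[k]}\cdot g_b^{[k']}\cdot g_{b'}$, producing exponents $e(s^u+s^v)$ which must lie in $e\langle s\rangle$, i.e.\ forcing $s^u+s^v\in\langle s\rangle$; this fails systematically for $p\ge3$ and supplies the required contradiction.

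For $p=2$ the same constraints cohere: the three necessarily distinct $b$-values must satisfy $b_2-b_1=e$, $b_3-b_1=-e$, and $3e\equiv 0\pmod m$, so $3\mid m$ and $H=\langle y^{m/3}\rangle$ is a subgroup of $N$ of order $3$ with $\Pi_2(T)=H\setminus\{1\}$, which is exactly the lemma's conclusion. The main obstacle I expect is giving a uniform treatment of the intermediate multiplicity patterns for $p\ge3$, particularly when no element of $T$ achieves multiplicity $p-1$ and the sequence is not all-distinct; producing an auxiliary subsequence whose product lies outside $O$ in those cases requires a careful choice of positions combined with the identities $\sum s^\ell=0$ and $-1\notin\langle s\rangle$.
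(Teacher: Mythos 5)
Your setup is sound and matches the paper's: passing to exponent coordinates, ruling out $1\in\Pi_p(T)$, concluding that $\Pi_p(T)$ is a single nontrivial $\langle s\rangle$-orbit $O$ of size $p$ and that $\mathsf h(T)\le p-1$, and your $p=2$ endgame (the three distinct differences forced into $\{e,-e\}$ with $3e\equiv 0\pmod m$) is correct. The problem is that the heart of the lemma --- showing $p\ge 3$ is impossible --- is not actually proved. Your swap argument only concludes that the $p$ values $b_i$, $i\notin J$, are not all distinct; that is not a contradiction, it merely feeds you into the multiplicity cases, and you concede in your last paragraph that the intermediate multiplicity patterns are unresolved. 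Worse, the step you lean on there, that exponents of the form $e(s^u+s^v)$ ``must lie in $e\langle s\rangle$'' and that ``this fails systematically for $p\ge 3$,'' is an unsubstantiated unit-equation claim: nothing available in this setting (only $\gcd(s^j-1,m)=1$ for $p\nmid j$) rules out $1+s^i\in\langle s\rangle$, and the mixed products $g_a^{[k]}\bm\cdot g_b^{[k']}\bm\cdot g_{b'}$ do not in general produce exponents that are multiples of a single $e$ unless you have already pinned down the differences $b_j-b_i$, which is exactly what is at issue. So the argument has a genuine gap precisely where the lemma is hard.

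The idea you are missing is the paper's use of Kneser's theorem on a partition of $T$. Since $\mathsf h(T)\le p-1$ and $|T|=2p-1$, one can write $T=A_1\bm\cdot\ldots\bm\cdot A_p$ with each $A_i$ a genuine subset, $|A_i|=2$ for $i\le p-1$ and $|A_p|=1$; twisting by powers of $r$ turns $A_1\cdots A_p$ into a product of subsets $B_i$ of the abelian group $N$ contained in $\Pi_p(T)$. Equality throughout Kneser's inequality (forced by $|\Pi_p(T)|=p$ and triviality of the stabilizer) pins down $|B_1B_2|=3$, i.e.\ exactly one coincidence among the four products, which yields $h_3h_4^{-1}\in\{(h_1h_2^{-1})^{r},(h_1h_2^{-1})^{-r}\}$; reordering the blocks gives the symmetric relation, and combining them yields $(h_1h_2^{-1})^{r^4-1}=1$, contradicting $\gcd(r^4-1,m)=1$ for $p\ge 3$ (Lemma~\ref{mequiv1}(i), $p\nmid 4$). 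This bypasses the multiplicity case analysis entirely and uses only the identity $\gcd(r^j-1,m)=1$ for $p\nmid j$, which is the one arithmetic fact actually at your disposal. I would recommend restructuring your $p\ge 3$ argument around this partition-plus-Kneser rigidity rather than trying to enumerate multiplicity patterns.
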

\begin{proof}
Since $a\in [1,p-1]$ and $\mbox{ord}(x)=p$, we have $\langle x^a\rangle=\langle x\rangle$. By replacing $x^a$ with $x$ and $r^a$ with $r$ if needed, we may always assume that $a=1$. We first show that
\begin{align}\label{hT}
\mathsf h(T)\leq p-1.
\end{align}

Since $|\Pi_p(T)|= p$, there exists a subsequence $T'=g_{i_1}\bm\cdot\ldots \bm\cdot g_{i_p}$ of $T$ such that $1\neq u=g_{i_1}\ldots g_{i_p}\in \Pi_p(T)$. By Lemma~\ref{conjugationone}, $\{u,u^r,\ldots, u^{r^{p-1}}\}\subseteq\pi(T')\subseteq \Pi_p(T)$. Since $|\{u,u^r,\ldots, u^{r^{p-1}}\}|=p=|\Pi_p(T)|$, we have $\{u,u^r,\ldots, u^{r^{p-1}}\}=\Pi_p(T)$ and thus $1\notin \Pi_p(T)$. Note that if $\mathsf h(T)\geq p$, then $g^{[p]}\mid T$ for some $g\mid T$. Since $a\in [1,p-1]$, we have $g\notin N$. By Lemma~\ref{mequiv1}~(iii), $g^p=1$. Hence $1=g^p\in \Pi_p(T)$, yielding a contradiction. Therefore, we must have $\mathsf h(T)\leq p-1$.

Let $g_i=xh_i$ where $h_i\in N$ for all $i\in [1,\ell]$. Then
$$T=(xh_1)\bm\cdot\ldots \bm\cdot (xh_{\ell}).$$
Since $\mathsf h(T)\leq p-1$, we can obtain the following factorization of $T$:
$$T=A_1\bm\cdot \ldots \bm\cdot A_p$$
such that each subsequence $A_i$ is actually a subset for every $i\in [1,p]$, $|A_i|=2$ for every $i\in[1,p-1]$, and $|A_p|=1$. Let $\mathbf{A}=(A_1, A_2, \ldots, A_p)$. Without loss of generality, we may assume that $A_i=\{g_{2i-1}, g_{2i}\}$ for every $i\in [1,p-1]$ and $A_p=\{g_{2p-1}\}$.

Now, we construct a sequence of subsets $\mathbf{B}$ of $N$ such that
$$\Pi^p(\mathbf{B})\subseteq \Pi_p(T)\mbox{ with }\mbox{stab}(\Pi^p(\mathbf{B}))=\{1\}\mbox{ and }|\Pi^p(\mathbf{B})|=p.$$ Let $B_i=\{h_{2i-1}^{r^{p-i}}, h_{2i}^{r^{p-i}}\}$ for $i\in [1,p-1]$ and $B_p=\{h_{2p-1}\}$. Since $A_i$ is a subset, we have $g_{2i-1}=xh_{2i-1}\neq xh_{2i}=g_{2i}$ for all $i\in [1,p-1]$. So $h_{2i-1}\neq h_{2i}$ and thus $|B_i|=2$ for all $i\in [1,p-1]$. Let $\mathbf{B}=(B_1,\ldots, B_p)$ and $\mbox{stab}(\Pi^p(\mathbf{B}))=M$. A straight forward computation shows that
$$\Pi^p(\mathbf{B})=A_1\ldots A_{p-1}A_p \subseteq\Pi_p(T).$$
If $|M|\neq 1$, then $|M|> p$. By Lemma~\ref{Kneser}, $|\Pi_p(T)|\geq |\Pi^p(\mathbf{B})|\geq |M|> p$, yielding a contradiction.

Therefore, we must have that $|M|=1$ and thus $M=\{1\}$. By Lemma~\ref{Kneser}, $|\Pi^p(\mathbf{B})|\geq \sum_{i=1}^p|B_i|-p+1=p$. Since $|\Pi_p(T)|= p$ and $\Pi^p(\mathbf{B})\subseteq \Pi_p(T)$, we have $p=|\Pi_p(T)|\geq |\Pi^p(\mathbf{B})|\geq p$. Therefore, $|\Pi^p(\mathbf{B})|=p$.

Next, we prove the first part of the conclusion, i.e., $p=2$. Assume to the contrary that $p\geq 3$. Let $\mathbf{B_{1,2}}=(B_1,B_2)$ and $\mathbf{\overline{B_{1,2}}}=(B_3,\ldots, B_p)$. Since $\Pi^p(\mathbf{B})=\Pi^2(\mathbf{B_{1,2}})\Pi^{p-2}(\mathbf{\overline{B_{1,2}}})$ and $M=\{1\}$, we have $\mbox{stab}(\Pi^2(\mathbf{B_{1,2}}))=\mbox{stab}(\Pi^{p-2}(\mathbf{\overline{B_{1,2}}}))=\{1\}$. By Lemma~\ref{Kneser}, we have
$$|\Pi^2(\mathbf{B_{1,2}})|\geq |B_1|+|B_2|-1=3$$
and
$$|\Pi^{p-2}(\mathbf{\overline{B_{1,2}}})|\geq \sum_{k=3}^p|B_k|-((p-2)-1)=2(p-3)+1-(p-3)=p-2.$$

Note that $\Pi^p(\mathbf{B})=\Pi^2(\mathbf{B_{1,2}})\Pi^{p-2}(\mathbf{\overline{B_{1,2}}})$ and $\mbox{stab}(\Pi^p(\mathbf{B}))=M=\{1\}$. Since $|\Pi^p(\mathbf{B})|=p$, by Lemma~\ref{Kneser}, $$p=|\Pi^p(\mathbf{B})|\geq |\Pi^2(\mathbf{B_{1,2}})|+|\Pi^{p-2}(\mathbf{\overline{B_{1,2}}})|-1\geq 3+(p-2)-1=p.$$
Therefore, all the above equal signs hold and thus $|\Pi^2(\mathbf{B_{1,2}})|=3$. Note that
\begin{align*}
\Pi^2(\mathbf{B_{1,2}})=&\{h_1^{r^{p-1}}, h_{2}^{r^{p-1}}\}\{h_{3}^{r^{p-2}}, h_{4}^{r^{p-2}}\}\\
=&\{h_1^{r^{p-1}}h_{3}^{r^{p-2}}, h_1^{r^{p-1}}h_{4}^{r^{p-2}}, h_2^{r^{p-1}}h_{3}^{r^{p-2}}, h_2^{r^{p-1}}h_{4}^{r^{p-2}}\}.
\end{align*}
Since $h_1\neq h_2$, we have
$$h_1^{r^{p-1}}h_{3}^{r^{p-2}}\neq h_2^{r^{p-1}}h_{3}^{r^{p-2}},\ \ \ \  h_1^{r^{p-1}}h_{4}^{r^{p-2}}\neq h_2^{r^{p-1}}h_{4}^{r^{p-2}};$$
similarly, since $h_{3}\neq h_{4}$, we have
$$h_1^{r^{p-1}}h_{3}^{r^{p-2}}\neq h_1^{r^{p-1}}h_{4}^{r^{p-2}},\ \ \ \  h_2^{r^{p-1}}h_{3}^{r^{p-2}}\neq h_2^{r^{p-1}}h_{4}^{r^{p-2}}.$$
Since $|\Pi^2(\mathbf{B_{1,2}})|=3$, we have either $h_1^{r^{p-1}}h_{3}^{r^{p-2}}= h_2^{r^{p-1}}h_{4}^{r^{a(p-2)}}$ or $h_1^{r^{p-1}}h_{4}^{r^{p-2}}= h_2^{r^{p-1}}h_{3}^{r^{p-2}}$. Thus $(h_{4}h_{3}^{-1})^{r^{p-2}}=(h_1h_2^{-1})^{r^{p-1}}$ or $(h_{3}h_{4}^{-1})^{r^{p-2}}=(h_1h_2^{-1})^{r^{p-1}}$, whence
$$h_{3}h_{4}^{-1}\in \{(h_1h_2^{-1})^{r}, (h_1h_2^{-1})^{-r}\}.$$

Now, let $\mathbf{A'}=(A_2, A_1, A_3,\ldots, A_p)$. As above, we have
$$h_{1}h_{2}^{-1}\in \{(h_3h_4^{-1})^{r}, (h_3h_4^{-1})^{-r}\}.$$
Thus, $h_{1}h_{2}^{-1}\in \{(h_3h_4^{-1})^{r}, (h_3h_4^{-1})^{-r}\}\subseteq \{(h_1h_2^{-1})^{r^{2}}, (h_1h_2^{-1})^{-r^{2}}\}$. If $h_{1}h_{2}^{-1}=(h_1h_2^{-1})^{r^{2}}$, then $(h_1h_2^{-1})^{r^{2}-1}=1$; if $h_{1}h_{2}^{-1}=(h_1h_2^{-1})^{-r^{2}}$, then $(h_1h_2^{-1})^{r^{2}+1}=1$, and thus $(h_1h_2^{-1})^{r^{4}-1}=1$. Since $p\geq 3$, we have $p\nmid 4$. By Lemma~\ref{mequiv1}~(i), $\mbox{gcd}(r^{4}-1,m)=1$. So $h_1h_2^{-1}=1$ and thus $h_1=h_2$, yielding a contradiction. Hence, we must have $p=2$.

We now verify the second part of the conclusion. By \eqref{hT}, $\mathsf h(T)\leq p-1=1$, so $T$ is a subset of $G$ and $\supp(T)=\{g_1, g_2, g_3\}$, implying that $\{h_1,h_2,h_3\}$ is a subset of $N$. Note that
\begin{align*}
\Pi_2(T)=&\{g_1g_2, g_2g_1, g_1g_3, g_3g_1, g_2g_3, g_3g_2\}\\
=&\{h_1^{-1}h_2, h_2^{-1}h_1,  h_1^{-1}h_3, h_3^{-1}h_1, h_2^{-1}h_3, h_3^{-1}h_2\}\\
=&(\{h_1,h_2,h_3\}\{h_1^{-1},h_2^{-1},h_3^{-1}\})\setminus\{1\}.
\end{align*}
Let $H=\mbox{stab}(\{h_1,h_2,h_3\}\{h_1^{-1},h_2^{-1},h_3^{-1}\})$. Since $|H||m$ and $\mbox{gcd}(p,m)=\mbox{gcd}(2,m)=1$, we have $|H|$ is odd. If $H=\{1\}$, then by Lemma~\ref{Kneser},
$$|\{h_1,h_2,h_3\}\{h_1^{-1},h_2^{-1},h_3^{-1}\}|\geq |\{h_1,h_2,h_3\}|+|\{h_1^{-1},h_2^{-1},h_3^{-1}\}|-1=5.$$
Thus $|\Pi_2(T)|\geq |\{h_1,h_2,h_3\}\{h_1^{-1},h_2^{-1},h_3^{-1}\}|-1=4$, yielding a contradiction to the given condition that $|\Pi_2(T)|=2$. If $|H|> 3$, by Lemma~\ref{Kneser}, $|\{h_1,h_2,h_3\}\{h_1^{-1},h_2^{-1},h_3^{-1}\}|\geq |H|>3$. As above, $|\Pi_2(T)|>2$, yielding a contradiction. Therefore, $|H|=3$. If $\{h_1,h_2,h_3\}$ is not contained in a single coset of $H$, then $|\{h_1,h_2,h_3\}H|\geq 2|H|$ and $|\{h_1^{-1},h_2^{-1},h_3^{-1}\}H|\geq 2|H|$. By Lemma ~\ref{Kneser}, $|\{h_1,h_2,h_3\}\{h_1^{-1},h_2^{-1},h_3^{-1}\}|\geq 3|H|=9$, yielding a contradiction. Thus $\{h_1,h_2,h_3\}$ is contained in a single coset of $H$. Therefore, $\Pi_2(T)=(\{h_1,h_2,h_3\}\{h_1^{-1},h_2^{-1},h_3^{-1}\})\setminus\{1\}=H\setminus \{1\}$ as desired. This completes the proof of the lemma.
\end{proof}

We are now ready to prove our first main result.\\

\noindent {\bf Proof of Theorem \ref{inversed(G)}}
\\

$(a)\Rightarrow (b)$. Let $S$ be a product-one free sequence over $G$ of length $\ell=m+p-2$ and let $T=S\bm\cdot S_N^{[-1]}$. We first prove the following claim.
\\

\noindent {\bf Claim 1.} If $G\not\cong C_2\ltimes C_3$, then $\varphi(T)$ is product-one free over $G/N\cong C_p$  and thus  $|T|\leq p-1$.
\\

Assume to the contrary that $\varphi(T)$ is not product-one free. Then there exists a subsequence $T'\mid T$ such that $\varphi(T')$ is a product-one sequence over $G/N$ with maximal length. Let $W=T\bm\cdot T'^{[-1]}$, then $W$ is product-one free over $G/N$. Let $T'=T_1\bm\cdot \ldots \bm\cdot T_t$ where $\varphi(T_i)$ is a minimal product-one subsequence over $G/N$ for all $i\in[1,t]$. Then
$$T=T_1\bm\cdot \ldots \bm\cdot T_t \bm\cdot W.$$
Since $\mathsf d(G/N)=p-1$, we have $|T_i|=|\varphi(T_i)|\leq p$ for all $i\in[1,t]$ and $|W|=|\varphi(W)|\leq p-1$.

We first prove that
$$|S_N|=0\mbox{\ \ \ and \ \ \ }S=T=(x^ah_1)\bm\cdot\ldots \bm\cdot (x^ah_{\ell}),$$
where $a\in [1,p-1]$ and $h_i\in N$ for every $i\in[1,\ell]$. Moreover, we have
\begin{align}\label{PipT0}
|\Pi_p(T_0)|=p
\end{align}
for every subsequence $T_0\mid S$ with length $2p-1$.

Since $1\notin \Pi(S)$, we have $1\notin\pi(T_i)$, so if $u_i\in\pi(T_i)$, then $u_i\neq 1$ where $i\in[1,t]$. By Lemma~\ref{conjugationone},
$$\pi(T_i)\supseteq \{u_i^{r^{s_{i1}}},u_i^{r^{s_{i2}}},\ldots,u_i^{r^{s_{it_i}}}\}$$
for all $i\in [1,t]$, $t_i=|T_i|$, and moreover, $u_i^{s_{ij}}\neq u_i^{s_{ik}}$ for $1\leq j<k\leq t_i$. Let $$A_i=\{u_i^{r^{s_{i1}}},u_i^{r^{s_{i2}}},\ldots,u_i^{r^{s_{it_i}}}\}\mbox{ \ \ \ and \ \ \ } B=\Pi(S_N)$$
where $i\in [1,t]$. Then $|A_i|=|T_i|$. Let
$$\mathbf{A}=(\pi(T_1),\pi(T_2),\ldots,\pi(T_t), B),$$
Clearly, $\Pi(\mathbf{A})\subseteq \Pi(S)\cap N$. Since $1\notin \Pi(S)$, we have $1\notin B$ and $1\notin \Pi(\mathbf{A})$. By Corollary~\ref{zerofree}, $|B|\geq |S_N|$. Moreover, by Lemma~\ref{productonefree},
$$m-1\geq |\Pi(\mathbf{A})|\geq \sum_{i=1}^t|\pi(T_i)|+|B|\geq \sum_{i=1}^t|A_i|+|S_N|= \sum_{i=1}^t|T_i|+|S_N|=|S|-|W|\geq m-1.$$
Thus, we have $|S|-|W|=m-1$ and $|\pi(T_i)|=|A_i|$. Therefore,
$$|W|=p-1\mbox{ \ \ and \ \ }\pi(T_i)=A_i.$$
Since $\varphi (W)$ is a product-one free sequence of length $p-1$ over $G/N\cong C_p$, by Lemma~\ref{n-1} we have $|\supp(\varphi (W))|=1$. Note that $\varphi(T')$ is a product-one subsequence of $\varphi(T)$ over $G/N$ with maximal length $|\varphi(T')|=|\varphi(T\bm\cdot W^{[-1]})|=|\varphi(T)|-\mathsf d(G/N)$. By Lemma~\ref{normalsequence}, we conclude that if $\varphi(g)\in \supp(\varphi (T))$ for some $g\in G\setminus N$, then $\varphi(g)\in \supp(\varphi (W))$. Since $|\supp(\varphi (W))|=1$, we have
\begin{align}\label{suppvarphiT}
|\supp(\varphi (T))|=1.
\end{align}
Hence, $T$ is a subsequence over $N_a=x^aN$ for some $a\in [1,p-1]$. Since $\varphi (T_i)$ is a minimal product-one sequence over $G/N\cong C_p$, we have $|T_i|=|\varphi (T_i)|=p$ for every $i\in [1,t]$. If $|S_N|\geq 1$, then let $u\mid S_N$. By Lemma~\ref{conjugationtwo}, we have
$$\Pi(T_1\bm\cdot u)\cap N\supseteq \pi(T_1)\{u\}\supseteq \pi(T_1)\{u, u^{r},\ldots, u^{r^{p-1}}\}.$$
Since $1\notin \Pi(T_1\bm\cdot u)$, we have $1\notin\pi(T_1)\{u, u^{r},\ldots, u^{r^{p-1}}\}$. By Lemma~\ref{productonefree}, we have
$$|\Pi(T_1\bm\cdot u)\cap N|\geq |\pi(T_1)\{u, u^{r},\ldots, u^{r^{p-1}}\}|\geq |\pi(T_1)|+|\{u, u^{r},\ldots, u^{r^{p-1}}\}|\geq 2p.$$
Note that
\begin{align*}
\Pi(S)\cap N&\supseteq\Pi(T_1\bm\cdot \ldots \bm\cdot T_t\bm\cdot S_N)\cap N\\
&\supseteq(\Pi(T_1\bm\cdot u)\cap N)(\Pi(T_2\bm\cdot \ldots \bm\cdot T_t\bm\cdot S_N\bm\cdot u^{[-1]})\cap N)\\
&\supseteq(\Pi(T_1\bm\cdot u)\cap N)(\Pi(T_2)\cap N) \ldots (\Pi(T_t)\cap N)(\Pi(S_N\bm\cdot u^{[-1]})\cap N)\\
&\supseteq(\Pi(T_1\bm\cdot u)\cap N)\pi(T_2)\ldots \pi(T_t)\Pi(S_N\bm\cdot u^{[-1]}).
\end{align*}
By Lemma~\ref{productonefree} and Corollary~\ref{zerofree},
\begin{align*}
|\Pi(S)\cap N|&\geq |\Pi(T_1\bm\cdot u)\cap N|+|\pi(T_2)|+\ldots +|\pi(T_t)|+|\Pi(S_N\bm\cdot u^{[-1]})|\\
&\geq 2p+(\sum_{i=2}^{t}|T_i|+(|S_N|-1))\\
&\geq |T'|+\sum_{i=1}^{t}|T_i|+|S_N|\\
&=m+p-2\\
&\geq m,
\end{align*}
so $\Pi(S)\supseteq N\ni 1$, yielding a contradiction.

Hence $|S_N|=0$ and $S=T$. By \eqref{suppvarphiT}, we have $|\supp(\varphi (S))|=|\supp(\varphi (T))|=1$, whence
$$S=(x^ah_1)\bm\cdot\ldots \bm\cdot (x^ah_{\ell}),$$
where $a\in [1,p-1]$ and $h_i\in N$ for every $i\in[1,\ell]$. Let $T_0\mid S$ be any subsequence of $S$ with length $2p-1$. We obtain a factorization of $S$ such that
$$S=T_0\bm\cdot T_1\bm\cdot \ldots \bm\cdot T_{(m-1)/p-1}$$
with $|T_i|=p$ for all $i\in [1, (m-1)/p-1]$. Note that $\Pi_p(T_0)\subseteq N$ and
$$\Pi(S)\cap N\supseteq\Pi_p(T_0)\pi(T_1)\ldots \pi(T_{(m-1)/p-1}).$$
Since $1\notin \Pi(S)$, we have $1\notin \Pi_p(T_0)$ and $1\notin \pi(T_i)$ for all $i\in [1,(m-1)/p-1]$. Let $T_0'$ be a subsequence of $T_0$ with length $p$. Then $\Pi_p(T_0)\supseteq \pi(T_0')$. By Lemma~\ref{conjugationone}, we have $|\Pi_p(T_0)|\geq |\pi(T_0')|\geq p$ and $|\pi(T_i)|\geq p$ for all $i\in [1,(m-1)/p-1]$. Thus by Lemma~\ref{productonefree}, we have
\begin{align*}
m-1&\geq |\Pi(S)\cap N|\\
&\geq |\Pi_p(T_0)\pi(T_1)\ldots \pi(T_{(m-1)/p-1})|\\
&\geq |\Pi_p(T_0)|+|\pi(T_1)|+\ldots +|\pi(T_{(m-1)/p-1})|\\
&\geq p+p((m-1)/p-1)\\
&\geq m-1.
\end{align*}
Therefore, we have $|\Pi_p(T_0)|=p$.

By Lemma~\ref{2p-1}, we have $p=2$ and $N$ has a subgroup $H$ of order $3$, so $3||N|$. Since $G\not\cong C_2\ltimes C_3$, we have $m\geq 9$. Thus, there exist two disjoint subsequences $T_0$ and $T_0'$ of $S$ with $|T_0|=|T_0'|=3$. By \eqref{PipT0}, $|\Pi_2(T_0)|=|\Pi_2(T_0')|=2$, and thus by Lemma~\ref{2p-1}, we have $\Pi_2(T_0)=\Pi_2(T_0')=H\setminus \{1\}$. Clearly, $1\in \Pi_2(T_0)\Pi_2(T_0')\subseteq \Pi(S)$, yielding a contradiction. This completes the proof of our claim.

(i) Since $G\not\cong C_2\ltimes C_3$, by Claim 1, we have $|T|\leq p-1$ and thus $|S_N|\geq m-1$. Since $1\notin\Pi(S_N)$ and $N\cong C_m$, we have $|S_N|\leq m-1$. Hence $|S_N|=m-1$ and thus $|T|=|S|-|S_N|=p-1$. By Claim 1, $\varphi(T)$ is product-one free over $G/N$. Therefore, by Lemma~\ref{n-1}, $|\supp(\varphi(T))|=1$ and thus $T$ is contained in a single coset of $N$, whence $T=(x^ay^{b_1})\bm\cdot\ldots\bm\cdot (x^ay^{b_{p-1}})$ where $a\in [1,p-1]$ and $b_i\in [0,p-1]$ for every $i\in[1,p-1]$. Since $1\notin\Pi(S_N)$ and $|S_N|=m-1$, by Lemma~\ref{n-1}, $S_N=(y^c)^{[m-1]}$ for some $c\in [0,m-1]$ with $\mbox{gcd}(c,m)=1$. Putting this together, we obtain
$$S=T\bm\cdot S_N=(x^ay^{b_1})\bm\cdot\ldots\bm\cdot (x^ay^{b_{p-1}})\bm\cdot (y^c)^{[m-1]}$$
as desired.

(ii) If $\varphi(T)$ is not product-one free over $G/N$, then as in the proof of the claim, we have $S=(xh_1)\bm\cdot (xh_{2}) \bm\cdot (xh_{3})$. Since $1\notin \Pi(S)$ and $\mbox{ord}(xh_{i})=2$ for all $i\in [1,3]$, we have $\mathsf h(S)=1$ and thus
$$S=x\bm\cdot xy\bm\cdot xy^2.$$
If $\varphi(T)$ is product-one free over $G/N$, then $|T'|\leq 1$. As in (i), we have
$$S= xy^{b}\bm\cdot (y^{c})^{[2]}$$
for $b\in [0,2]$ and $c\in[1,2]$. \\

$(b)\Rightarrow (a)$. (i) Let $S'$ be any subsequence of $S$. To show that $S$ is product-one free, i.e., $1\notin \Pi(S)$, it suffices to show that $1\notin \pi(S')$. Write $S_1'\mid (x^ay^{b_1})\bm\cdot\ldots\bm\cdot (x^ay^{b_{p-1}})$ and $S_2'\mid (y^c)^{[m-1]}$. If $S_1'\neq \emptyset$, then since $a\in[1,p-1]$ and $1\neq |S_1'|\leq p-1$, we have that $\varphi(S')$ is not a product-one sequence over $G/N$, i.e. $\varphi(\pi(S'))=\pi(\varphi(S'))\neq 1$. Thus $\pi(S')\cap N=\emptyset$. Therefore, $1\notin\pi(S')$. If $S_1'=\emptyset$, then $S'\mid (y^c)^{[m-1]}$. Clearly, $1\notin\pi(S')$ as $\mbox{gcd}(c,m)=1$. Thus $S$ is product-one free.

(ii) If $G\cong C_2\ltimes C_3$ and $S=x\bm\cdot xy\bm\cdot xy^2$ or $S= xy^{b}\bm\cdot (y^{c})^{[2]}$ for $b\in [0,2]$ and $c\in[1,2]$, then it is easy to check that $1\notin\Pi(S)$, so $S$ is product-one free.
\qed

\section{$|G|$-Product-One Free Sequences}
In this section, we characterize the structure of $|G|$-product-one free sequences of length $\mathsf E(G)-1$ and provide a proof for Theorem~\ref{inverseE(G)}. We first prove a few lemmas and we fix some notation which will be used throughout this section. Let $\mathbf{A}=(A_1, A_2, \ldots, A_{\ell})$ be a sequence of subsets of $N$ with $A_i=\{u_i^{r^{k_1}},u_i^{r^{k_2}},\ldots,u_i^{r^{k_t}}\}$ where $0=k_1<k_2\ldots<k_t\leq p-1$, $t\in [1, p]$, $u_i\in N$ and $i\in [1,\ell]$. Let $M=\mbox{stab}(\Pi^{v-1}(\mathbf{A}))$ where $v\in[2,\ell+1]$. Then $M$ is a subgroup of $N$. Let
\begin{center}
$I_M$ be the subset of $[1,\ell]$ such that $i\in I_M$ if and only if $A_i\subseteq M$ for $i\in [1,\ell]$.
\end{center}
Let $Q\in N/M$ be a coset of $M$,
$$V_Q=\{i\in[1,\ell]: A_i\cap Q\neq \emptyset\}$$
and $$\mu=|\{Q\in N/M: |V_Q|\geq v\}|.$$

\begin{lemma}\label{IM}
\begin{itemize}
\item[(i)] $V_M=I_M$. Moreover, if $Q\neq M$, then $V_Q\cap V_M=\emptyset$.
\item[(ii)] If $Q\neq M$ and $i\in V_Q$ for some $i$, then all $t$ elements of $A_i$ are in $t$ different cosets of $M$.
\item[(iii)] If $t=p$, $\mu =1$, and $R$ is the unique coset of $M$ such that $|V_R|\geq v$, then $R=M$.
\end{itemize}
\end{lemma}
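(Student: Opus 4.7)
The plan is to prove the three parts in sequence, with parts (i) and (ii) being essentially direct applications of Lemma~\ref{basic}, and part (iii) requiring a short additional orbit argument. The guiding observation is that the $r$-orbit $\{u, u^r, \ldots, u^{r^{p-1}}\}$ of any element of $N$ has an all-or-nothing behavior with respect to a subgroup $M \le N$: either it lies inside $M$, or its $p$ elements occupy $p$ distinct cosets of $M$.

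For part (i), the inclusion $I_M \subseteq V_M$ is immediate from the definitions. For the reverse containment, I would take $i \in V_M$, pick some $u_i^{r^{k_j}} \in A_i \cap M$, and invoke Lemma~\ref{basic}(i) to conclude that the entire orbit $\{u_i, u_i^r, \ldots, u_i^{r^{p-1}}\}$ lies in $M$, whence $A_i \subseteq M$ and $i \in I_M$. The disjointness of $V_Q$ and $V_M$ for $Q \neq M$ then follows at once, since $V_M = I_M$ forces $A_i \subseteq M$ for every $i \in V_M$, and such an $A_i$ cannot meet a different coset.

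Part (ii) uses Lemma~\ref{basic}(ii) in the same spirit. Given $Q \neq M$ and $i \in V_Q$, part (i) immediately yields $A_i \cap M = \emptyset$: otherwise $A_i \subseteq M$, which cannot meet $Q$. If two elements $u_i^{r^{k_a}}$ and $u_i^{r^{k_b}}$ of $A_i$ with $a \neq b$ lay in a common coset of $M$, Lemma~\ref{basic}(ii) would place the whole orbit inside $M$, contradicting $A_i \cap M = \emptyset$. Hence the $t$ elements of $A_i$ occupy $t$ distinct cosets.

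Part (iii) is the step I expect to be the main obstacle, and it is where the hypothesis $t = p$ is essential. I would assume $R \neq M$ and derive a contradiction by producing a second coset with $|V_Q| \geq v$. Because $t = p$, each $A_i$ is already a full $r$-orbit $\{u_i, u_i^r, \ldots, u_i^{r^{p-1}}\}$. Picking any $i \in V_R$, which is possible since $|V_R| \geq v \geq 2$, part (ii) says the image $\pi(A_i) \subseteq N/M$ consists of $p$ distinct cosets; writing $R = u_i^{r^{j_0}} M$ and using the $r$-action on $N/M$, I obtain $\pi(A_i) = \{R, rR, \ldots, r^{p-1}R\} =: O_R$, a set depending only on $R$ and not on $i$. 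Therefore every $Q \in O_R$ meets $A_i$ for each $i \in V_R$, so $V_R \subseteq V_Q$ and $|V_Q| \geq v$. Since $|O_R| = |\pi(A_i)| = p \geq 2$, there is some $Q \in O_R$ with $Q \neq R$; this contradicts the uniqueness of $R$, and hence $R = M$.
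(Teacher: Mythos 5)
Your proposal is correct and takes essentially the same approach as the paper: parts (i) and (ii) are the same direct applications of Lemma~\ref{basic}, and in part (iii) your orbit argument is the paper's argument (which exhibits the single coset $\alpha^{r}M\neq \alpha M=R$ with $|V_{\alpha^{r}M}|\geq |V_R|\geq v$) carried through all $p$ powers of $r$ at once.
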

\begin{proof}
(i) By definitions of $I_M$ and $V_M$, we have $V_M\supseteq I_M$. It follows from Lemma~\ref{basic}~(i) that $V_M\subseteq I_M$, so $V_M=I_M$. Moreover, if $Q\neq M$ and $V_Q\cap V_M\neq \emptyset$, then $V_Q\cap I_M\neq \emptyset$. Thus, there exists an integer $i\in [1,\ell]$ such that $A_i\cap Q\neq \emptyset$ and $A_i\subseteq M$, yielding a contradiction. Therefore, if $Q\neq M$ then $V_Q\cap V_M=\emptyset$.

(ii) Given that $Q\neq M$, by (i) $V_Q\cap V_M=\emptyset$. Since $i\in V_Q$, we have $i\notin V_M$ and thus $1\notin A_i$. By Lemma~\ref{basic}~(iii), $|A_i|=t$. Let $\alpha\neq \beta$ be any two elements of $A_i$. If $\alpha$ and $\beta$ are in the same coset of $M$, then $\alpha\beta^{-1}\in M$. By Lemma~\ref{basic}~(ii), $\alpha\in M$. Thus $i\in V_M$, yielding a contradiction. Therefore, all $t$ elements of $A_i$ are in $t$ different cosets of $M$.

(iii) Assume to the contrary that $R\neq M$. Then $R=\alpha M$ for some $\alpha\in N\setminus M$. Let $\alpha_i\in A_i \cap R=A_i \cap \alpha M$ for all $i\in V_R$. Since $\alpha_i\in A_i=\{u_i, u_i^{r}\ldots, u_i^{r^{p-1}}\}$, $\alpha_i^{r}\in A_i$. Thus $\alpha_i^{r}\in A_i\cap \alpha^{r}M$ for all $i\in V_R$, implying that for all $i\in V_R$, $i\in V_{\alpha^{r}M}$ and thus $|V_{\alpha_i^rM}|\geq |V_R|$. Since $\alpha\notin M$, by Lemma~\ref{basic}~(ii), $\alpha M\neq \alpha^r M$. So we have found another coset $\alpha^r M(\neq R)$ such that $|V_{\alpha_i^rM}|\geq |V_R|\geq v$, yielding a contradiction to $\mu=1$. Hence we must have $R=M$.
\end{proof}

\begin{lemma}\label{mu}
\begin{itemize}
\item[(i)] If $\mu=0$, then $|\Pi^{v-1}(\mathbf{A})|\geq |M|(2-v+\ell t-(t-1)|I_M|)$.
\item[(ii)] If $\mu\geq 2$, then $|\Pi^{v-1}(\mathbf{A})|\geq v|M|$.
\item[(iii)] If $\mu=1$ and $R=M$, then $|\Pi^{v-1}(\mathbf{A})|\geq |M|(1+t(\ell-|I_M|))$.
\item[(iv)] If $\mu=1$ and $R\neq M$, then $|\Pi^{v-1}(\mathbf{A})|\geq |M|(1+(t-1)\ell-(t-2)|I_M|)$.
\end{itemize}
\end{lemma}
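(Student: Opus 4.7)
My plan is to apply the generalized Kneser Theorem (Lemma~\ref{genKneser}) with $k = v-1$, which yields the inequality
$$|\Pi^{v-1}(\mathbf{A})| \geq |M|\Bigl(2 - v + \sum_{Q \in N/M} \min\{v-1,\, |V_Q|\}\Bigr),$$
and then to estimate the sum on the right in each of the four cases separately.

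The key preliminary computation I would establish first is the identity $\sum_{Q \in N/M} |V_Q| = \ell t - (t-1)|I_M|$. Each index $i \in I_M$ contributes $1$ to this sum because its $A_i$ lies entirely in the single coset $M$. For $i \notin I_M$ we have $A_i \not\subseteq M$, so $A_i$ meets some coset $Q \neq M$, and then Lemma~\ref{IM}(ii) forces all $t$ elements of $A_i$ to lie in $t$ distinct cosets of $M$; hence such an $i$ contributes exactly $t$. Summing gives $|I_M| + t(\ell - |I_M|) = \ell t - (t-1)|I_M|$.

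With this identity the four cases become routine. For (i), $\mu = 0$ means $|V_Q| < v$ for every coset, so $\min\{v-1,|V_Q|\} = |V_Q|$ and the sum equals $\ell t - (t-1)|I_M|$; substituting into Lemma~\ref{genKneser} gives the claim directly. For (ii), $\mu \geq 2$ supplies two distinct cosets each contributing at least $v-1$ to the Kneser sum, so the sum is $\geq 2(v-1)$, producing $|M|(2-v+2(v-1)) = v|M|$. For (iii), the unique coset $R = M$ contributes $v-1$; by Lemma~\ref{IM}(i) we have $V_M = I_M$, so the remaining cosets contribute the complementary total $\ell t - (t-1)|I_M| - |I_M| = t(\ell - |I_M|)$, and the bound $|M|(1 + t(\ell - |I_M|))$ drops out. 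For (iv), $R \neq M$ again contributes $v-1$, while the remaining cosets contribute $\ell t - (t-1)|I_M| - |V_R|$; I then apply the inequality $|V_R| \leq \ell - |I_M|$, which comes from Lemma~\ref{IM}(i) (namely $V_R \cap V_M = \emptyset$ together with $V_M = I_M$). A line of arithmetic converts the resulting expression into the claimed $|M|(1+(t-1)\ell-(t-2)|I_M|)$.

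There is no serious obstacle here once the $\sum_Q |V_Q|$ identity is established; the only two places needing care are that (a) the contribution $t$ for $i \notin I_M$ relies genuinely on Lemma~\ref{IM}(ii), which in turn uses $|A_i| = t$ and $1 \notin A_i$, and (b) in case (iv) the bound $|V_R| \leq \ell - |I_M|$ is exactly what converts the residual sum into the stated form, so the disjointness furnished by Lemma~\ref{IM}(i) must be invoked explicitly.
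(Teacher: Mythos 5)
Your proposal is correct and follows essentially the same route as the paper: apply Lemma~\ref{genKneser} with $k=v-1$, establish $\sum_{Q\in N/M}|V_Q|=\ell t-(t-1)|I_M|$ via Lemma~\ref{IM}(i)--(ii), and then split into the four cases exactly as the paper does, including the use of $|V_R|\leq \ell-|I_M|$ in case (iv). No gaps.
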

\begin{proof}
(i) If $\mu=0$, then by Lemma~\ref{genKneser},
\begin{align*}
|\Pi^{v-1}(\mathbf{A})|&\geq |M|\Big(2-v+\sum_{Q\in N/M}\min\big\{v-1,|\{i\in[1,\ell]: A_i\cap Q\neq \emptyset\}|\big\}\Big)\\
&=|M|\Big(2-v+\sum_{Q\in N/M}|\{i\in[1,\ell]: A_i\cap Q\neq \emptyset\}|\Big)\ \ \   (\mbox{as } \mu=0)\\
&=|M|\Big(2-v+|V_M|+\sum_{Q\in N/M, Q\neq M}|\{i\in[1,\ell]: A_i\cap Q\neq \emptyset\}|\Big)\\
&=|M|\Big(2-v+|I_M|+\sum_{Q\in N/M, Q\neq M}\sum_{i\in[1,\ell], A_i\cap Q\neq \emptyset}1\Big)\ \ (\mbox{by Lemma~\ref{IM}~(i)})\\
&=|M|\Big(2-v+|I_M|+\sum_{i\in[1,\ell]\setminus I_M}\sum_{Q\in N/M, A_i\cap Q\neq \emptyset}1\Big)\\
&=|M|(2-v+t(\ell-|I_M|)+|I_M|)\ \ (\mbox{by Lemma~\ref{IM}~(ii)})\\
&=|M|(2-v+\ell t-(t-1)|I_M|).
\end{align*}
(ii) If $\mu\geq 2$, then by Lemma~\ref{genKneser},
\begin{align*}
|\Pi^{v-1}(\mathbf{A})|\geq &|M|\Big(2-v+\sum_{Q\in N/M}\min\big\{v-1,|\{i\in[1,\ell]: A_i\cap Q\neq \emptyset\}|\big\}\Big)\\
=&|M|(2-v+2(v-1))=v|M|.
\end{align*}
(iii) If $\mu=1$ and $R=M$, then by Lemma~\ref{genKneser},
\begin{align*}
|\Pi^{v-1}(\mathbf{A})|\geq &|M|\Big(2-v+\sum_{Q\in N/M}\min\big\{v-1,|\{i\in[1,\ell]: A_i\cap Q\neq \emptyset\}|\big\}\Big)\\
=&|M|\Big(2-v+v-1+\sum_{Q\in N/M, Q\neq M}|\{i\in[1,\ell]: A_i\cap Q\neq \emptyset\}|\Big)\\
=&|M|\Big(1+\sum_{Q\in N/M, Q\neq M}\sum_{i\in[1,\ell], A_i\cap Q\neq \emptyset}1\Big)\\
=&|M|\Big(1+\sum_{i\in[1,\ell]\setminus I_M}\sum_{Q\in N/M, A_i\cap Q\neq \emptyset}1\Big)\\
=& |M|(1+t(\ell-|I_M|))\ \ \ \ \ \  \ \ (\mbox{by Lemma~\ref{IM}~(ii)}).
\end{align*}
(iv) If $\mu=1$ and $R\neq M$, then by Lemma~\ref{IM}~(i), $V_R\cap I_M=\emptyset$. Therefore, $|I_M|+|V_R|\leq \ell$. By Lemma~\ref{genKneser},
\begin{align*}
|\Pi^{v-1}(\mathbf{A})|\geq &|M|\Big(2-v+\sum_{Q\in N/M}\min\big\{v-1,|\{i\in[1,\ell]: A_i\cap Q\neq \emptyset\}|\big\}\Big)\\
=&|M|\Big(2-v+v-1+\sum_{Q\in N/M, Q\neq R}|\{i\in[1,\ell]: A_i\cap Q\neq \emptyset\}|\Big) \\
=&|M|\Big(1-|V_R|+|V_M|+\sum_{Q\in N/M, Q\neq M}|\{i\in[1,\ell]: A_i\cap Q\neq \emptyset\}|\Big)\\
=&|M|\Big(1-|V_R|+|I_M|+\sum_{Q\in N/M, Q\neq M}\sum_{i\in[1,\ell], A_i\cap Q\neq \emptyset}1\Big)\\
=&|M|\Big(1-\ell+2|I_M|+\sum_{i\in[1,\ell]\setminus I_M}\sum_{Q\in N/M, A_i\cap Q\neq \emptyset}1\Big)\\
=& |M|(1-\ell+2|I_M|+t(\ell-|I_M|))\ \ \ \ \ \  \ \ (\mbox{by Lemma~\ref{IM}~(ii)})\\
=& |M|(1+(t-1)\ell-(t-2)|I_M|).
\end{align*}
This completes the proof of the lemma.
\end{proof}

\begin{lemma}\label{1inpiU}
Let $S$ be a sequence over $N$ with length $m+p-2$ and $T$ be a sequence with length $p$ over $N_i$ for some $i\in [1,p-1]$. There exists a subsequence $U\mid S$ with $p||U|$ such that either $1\in \pi(U)$ or $1\in \pi(T\bm\cdot U)$.
\end{lemma}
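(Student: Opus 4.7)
The strategy is to reduce the statement, via Lemmas~\ref{conjugationone} and~\ref{conjugationtwo}, to a productset question in the abelian normal subgroup $N$, and then to invoke the orbit-sumset estimates of Lemmas~\ref{IM} and~\ref{mu}.

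Fix $u \in \pi(T) \subseteq N$. Since $\varphi(T) = (x^iN)^{[p]}$ is a minimal product-one sequence in $G/N \cong C_p$, Lemma~\ref{conjugationone} forces the full $r$-orbit $O(u) := \{u, u^r, \ldots, u^{r^{p-1}}\}$ to lie inside $\pi(T)$. If $1 \in \pi(T)$, one can use that $|S| = m+p-2 \geq m = \mathsf{D}(C_m)$ to extract a product-one subsequence $V_0 \mid S$ and then adjust its length to a multiple of $p$ by absorbing extra elements whose orbits cover the required slack (via Lemma~\ref{conjugationtwo}), producing a $U$ for which $1 \in \pi(T \bm\cdot U)$. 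The remaining, main case is $1 \notin \pi(T)$; then $\pi(T)$ is a union of nontrivial $r$-orbits, each of size $p$ by Lemma~\ref{basic}(iii), so $|\pi(T)| \geq p$.

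For each term $s_j$ of $S$ set $A_j := \{s_j, s_j^r, \ldots, s_j^{r^{p-1}}\}$ (an $r$-orbit of size $1$ or $p$), and put $\mathbf{A} := (A_1, \ldots, A_{m+p-2})$. For any subsequence $U = s_{j_1} \bm\cdot \ldots \bm\cdot s_{j_k} \mid S$, one has $\pi(U) = s_{j_1} \cdots s_{j_k} \in A_{j_1} \cdots A_{j_k}$ and, by Lemma~\ref{conjugationtwo}, $\pi(T \bm\cdot U) \supseteq \pi(T) \cdot A_{j_1} \cdots A_{j_k}$. Since the product $A_{j_1} \cdots A_{j_k}$ is itself $r$-invariant, the desired conclusion follows once we exhibit $k \geq 1$ with $p \mid k$ whose associated productset meets the $r$-invariant ``bad set'' $B := \{1\} \cup \pi(T)^{-1}$, which has size at least $p+1$.

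Assume for contradiction no such $k$ exists. Then for every admissible $k \in \{p, 2p, \ldots, m-1\}$, $\Pi^k(\mathbf{A}) \cap B = \emptyset$, so $|\Pi^k(\mathbf{A})| \leq m - p - 1$. Apply Lemma~\ref{mu} with $t = p$, $\ell = m+p-2$, and $v = k+1$, and set $M := \mbox{stab}(\Pi^k(\mathbf{A}))$. By Lemma~\ref{basic}(i) the $r$-invariance of $\Pi^k(\mathbf{A})$ is inherited by $M$, which must therefore be a disjoint union of $\{1\}$ and full $r$-orbits; hence $|M| \equiv 1 \pmod p$ and $|M| \mid m$. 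Running through the four cases of Lemma~\ref{mu} ($\mu = 0$; $\mu \geq 2$; $\mu = 1$ with $R = M$; $\mu = 1$ with $R \neq M$) for a well-chosen $k$ (most plausibly $k = m-1$, the largest admissible value, corresponding to $v = m$), the congruences $m \equiv 1 \pmod p$ and $|M| \mid m$ combined with the hypothesis $|S| = m+p-2$ should force $|\Pi^k(\mathbf{A})| \geq m - p$, contradicting the upper bound $m-p-1$.

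The main obstacle will be the case analysis in the final step, in particular the subcase $\mu = 1$ with $R \neq M$ (where the Lemma~\ref{mu} bound is $|M|(1 + (p-1)\ell - (p-2)|I_M|)$), together with the possibility that many terms $s_j$ equal $1$ (inflating $|I_M|$ and degrading the naive lower bound). Controlling this requires exploiting both the restriction $|M| \equiv 1 \pmod p$ and the fact that every $A_j$ with $s_j \neq 1$ is either entirely contained in $M$ or splits evenly across $p$ distinct cosets of $M$ (Lemma~\ref{IM}(ii)).
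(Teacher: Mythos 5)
Your general framework (replacing each term of $S$ by its $r$-orbit $A_j$, invoking Lemma~\ref{conjugationtwo} to get $\pi(T\bm\cdot U)\supseteq\pi(T)A_{j_1}\cdots A_{j_k}$, choosing $k=m-1$ so that $p\mid k$ by Lemma~\ref{mequiv1}(i), and estimating $|\Pi^{m-1}(\mathbf{A})|$ via Lemmas~\ref{IM} and~\ref{mu}) is exactly the paper's. But there is a genuine gap at the point you yourself flag as ``the main obstacle'': your plan is to force $|\Pi^{m-1}(\mathbf{A})|\geq m-p$ and contradict the assumed upper bound $m-p-1$, and this is simply false without an extra hypothesis. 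Take $S=1^{[m+p-2]}$, or more generally $S$ supported on a proper subgroup $M$ of $N$ with high multiplicity: then $\Pi^{m-1}(\mathbf{A})\subseteq M$ can have size far below $m-p$, no Kneser-type bound will rescue you, and the congruence $|M|\equiv 1\pmod p$ does not help. In these situations the lemma is true only because of its \emph{first} alternative $1\in\pi(U)$, which your main argument never invokes.

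The missing idea is a dichotomy on $|I_M|$ where $M=\mbox{stab}(\Pi^{m-1}(\mathbf{A}))$: if $|I_M|\geq p|M|+|M|-1$, then Lemma~\ref{knzerosumfree}(i) applied inside the cyclic group $M$ yields a product-one subsequence $U\mid S_M$ of length $p|M|$ (a multiple of $p$), realizing the alternative $1\in\pi(U)$ directly; only when $|I_M|\leq p|M|+|M|-2$ does one run the case analysis of Lemma~\ref{mu}, and with that bound on $|I_M|$ in hand (together with $m\geq(p+1)|M|$) the estimate actually achievable is the stronger $|\Pi^{m-1}(\mathbf{A})|\geq m$, i.e.\ $\Pi^{m-1}(\mathbf{A})=N$, which kills the case $\mu=1$ outright (Lemma~\ref{IM}(iii) forces $R=M$, but then $|V_M|=|I_M|<m$, a contradiction) and makes the bad-set bookkeeping with $B=\{1\}\cup\pi(T)^{-1}$ unnecessary. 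Your separate preliminary discussion of the case $1\in\pi(T)$ is also not needed once this dichotomy is in place, and as written (``adjust its length to a multiple of $p$ by absorbing extra elements'') it is not a proof.
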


\begin{proof}
Let $S=u_1\bm\cdot \ldots \bm\cdot u_{\ell}$, where $\ell=m+p-2$ and let $n=\mathsf v_1(S)$, i.e., the number of times $1$ occurs in $S$. Let $A_j=\{u_j, u_j^{r}, \ldots,  u_j^{r^{p-1}}\}$ for $j\in[1,\ell-n]$ and $A_j=\{1\}$ for $j\in[\ell-n+1,\ell]$. Clearly, $\ell\geq m-1$. By Lemma~\ref{conjugationtwo},
\begin{align}\label{pU}
\pi(T\bm\cdot u_{i_1}\bm\cdot\ldots\bm\cdot u_{i_{m-1}})\supseteq \pi(T)A_{i_1}\ldots A_{i_{m-1}}
\end{align}
for every $(m-1)$-subset $\{i_1,\ldots,i_{m-1}\}\subseteq [1,\ell]$. Let $\mathbf{A}=(A_1,\ldots, A_{\ell})$ and $M=\mbox{stab}(\Pi^{m-1}(\mathbf{A}))$. Recall that $I_M$ is the subset of $[1,\ell]$ such that $j\in I_M$ if and only if $A_j\subseteq M$. If $|I_M|\geq p|M|+|M|-1$, then by Lemma~\ref{knzerosumfree}~(i), there exists a subsequence of $U\mid S_M$ with length $|U|=p|M|$ such that $1\in\pi(U)$ as desired.

Next, we assume that $|I_M|\leq p|M|+|M|-2$ and will prove $|\Pi^{m-1}(\mathbf{A})|\geq m$ (i.e., $\Pi^{m-1}(\mathbf{A})=N$). Clearly, if $M=N$, then $\Pi^{m-1}(\mathbf{A})=N$ and thus $|\Pi^{m-1}(\mathbf{A})|\geq m$ as desired. We may always assume that $M\lneq N$. Let
$$\mu=|\{Q\in N/M: |V_Q|\geq m\}|.$$
If $\mu=0$, then by Lemma~\ref{mu}~(i),
\begin{align*}
|\Pi^{m-1}(\mathbf{A})|&\geq  |M|(2-m+p\ell-(p-1)|I_M|)\\
&\geq |M|(p\ell-m-(p-1)(p|M|+|M|-2)+2)\\
&\ \ \ \ \ \ \ \ \ \ \ \ (\mbox{as } |I_M|\leq p|M|+|M|-2)\\
&= |M|(p-1)(m-(p+1)|M|)+p^2|M|     \ \ \         (\mbox{as } \ell=m+p-2)\\
&= (|M|(p-1)-1)(m-(p+1)|M|)+m+(p^2-p-1)|M|     \\
&\geq m+(p^2-p-1)|M|     \ \ \ \ \         (\mbox{as } m/|M|\geq p+1)\\
&\geq m.
\end{align*}
If $\mu\geq 2$, then by Lemma~\ref{mu}~(ii),
\begin{align*}
|\Pi^{m-1}(\mathbf{A})|\geq m|M|\geq m.
\end{align*}
If $\mu=1$, then let $R\in N/M$ be the unique coset of $M$ such that $|V_R|\geq m$. By Lemma~\ref{IM}~(iii), $R=M$, so
$$|V_M|=|V_R|\geq m.\ \ \ \ \ \ \ \ \ \ (*)$$
However, by Lemma~\ref{IM}~(i), we have $|V_M|=|I_M|\leq p|M|+|M|-2<m$, yielding a contradiction to $(*)$. Thus $\mu\neq 1$.

In all possible cases, we have shown $|\Pi^{m-1}(\mathbf{A})|\geq m$ (i.e., $\Pi^{m-1}(\mathbf{A})=N$). Thus $1\in\pi(T)\Pi^{m-1}(\mathbf{A})=N$, whence there exists a $(m-1)$-subset $\{i_1,\ldots,i_{m-1}\}\subseteq [1,\ell]$ such that $1\in \pi(T)A_{i_1}\ldots A_{i_{m-1}}$. By \eqref{pU}, $1\in\pi(T\bm\cdot u_{i_1}\bm\cdot\ldots\bm\cdot u_{i_{m-1}})$. By Lemma~\ref{mequiv1}~(i), $p|m-1$. Let $U=u_{i_1}\bm\cdot\ldots\bm\cdot u_{i_{m-1}}$. Then $U\mid S$ with $p||U|$ such that $1\in \pi(T\bm\cdot U)$. This completes the proof of the lemma.
\end{proof}

\begin{lemma}\label{SNigeqp}
Let $S$ be a sequence over $G$ with length $mp+m+p-3$ such that $1\notin \Pi_{mp}(S)$. If $|S_{N_i}|\geq p$ for some $i\in [1,p-1]$ where $N_i=x^iN$, then
$$G\cong C_2\ltimes C_3 \mbox{ and } S=x\bm\cdot xy \bm\cdot xy^{2}\bm\cdot  1^{[5]}.$$
\end{lemma}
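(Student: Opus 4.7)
The plan is to derive a contradiction to $1 \notin \Pi_{mp}(S)$ unless we land in the exceptional configuration $G \cong C_2 \ltimes C_3$ with $S = x \bm\cdot xy \bm\cdot xy^2 \bm\cdot 1^{[5]}$. First I would fix $i \in [1, p-1]$ with $|S_{N_i}| \geq p$ and pick a subsequence $T \mid S_{N_i}$ of length $p$; then $\varphi(T)$ is trivially product-one in $G/N \cong C_p$, so $\pi(T) \subseteq N$. The engine of the argument is Lemma \ref{1inpiU}: applied to any length-$(m+p-2)$ window inside $S_N$ together with $T$, it produces a subsequence $U$ of that window with $p \mid |U|$ and with either $1 \in \pi(U)$ or $1 \in \pi(T \bm\cdot U)$, i.e., a product-one piece of length divisible by $p$. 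Since the concatenation of two product-one sequences is again product-one, iterating this extraction--at each stage feeding in a fresh $T$ drawn from some non-identity coset and a fresh length-$(m+p-2)$ window of $S_N$--builds a product-one subsequence of length a multiple of $p$, with the goal of landing on exactly $mp$.

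I would divide the argument according to the size of $|S_N|$. If $|S_N| \geq mp - 1$ (equivalently $|S \setminus N| \leq m+p-2$), then Lemma \ref{knzerosumfree}(i) already extracts from $S_N$ a product-one subsequence of length $(p-1)m$; applying Lemma \ref{1inpiU} once to what remains of $S_N$ together with $T$ produces a product-one piece of length $m$ that uses $T$, and gluing the two pieces yields a product-one subsequence of length $mp$, contradicting the hypothesis. When $|S \setminus N|$ is larger, one draws multiple disjoint $T$'s from the non-normal cosets and iterates Lemma \ref{1inpiU}; the structural restrictions imposed by Lemma \ref{pit=1} on product-one sequences concentrated in $G \setminus N$, together with the congruence $m \equiv 1 \pmod p$ from Lemma \ref{mequiv1}(i), let us splice pieces together and reach total length $mp$ in all but a very narrow parameter range.

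The main obstacle is the transitional regime where $|S_N|$ and each $|S_{N_j}|$ are just barely too small for the iteration to complete. Careful accounting via Lemma \ref{mu}, applied to the stabilizer of the extracted product-set, shows that this regime forces $p = 2$ and $m = 3$, i.e., $G \cong C_2 \ltimes C_3$. In this tiny group a finite case analysis--using that every element of $G \setminus N$ has order $p = 2$ by Lemma \ref{mequiv1}(iii), that $\mathsf h$ must equal $1$ on the non-normal part, and that the relevant product sets are easily enumerated--pins $S$ down uniquely to $x \bm\cdot xy \bm\cdot xy^2 \bm\cdot 1^{[5]}$. A direct check confirms that $1 \notin \Pi_6(S)$ for this sequence, so the exceptional case genuinely occurs and the proof is complete.
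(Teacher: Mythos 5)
Your outline has two genuine gaps, one arithmetic and one structural. The arithmetic one: in your first case you want to glue a product-one piece of length $(p-1)m$ extracted from $S_N$ by Lemma~\ref{knzerosumfree}(i) to ``a product-one piece of length $m$ that uses $T$'' coming from Lemma~\ref{1inpiU}. But Lemma~\ref{1inpiU} only produces $T\bm\cdot U$ with $|T|=p$ and $p\mid |U|$, so every piece it yields has length $\equiv 0\pmod p$, whereas the complementary length you need, $mp-(p-1)m=m$, satisfies $m\equiv 1\pmod p$ by Lemma~\ref{mequiv1}(i). The two can never sum to exactly $mp$; the same congruence obstruction blocks every variant in which one large piece comes purely from $S_N$ and one piece uses a single $T$. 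The paper avoids this by never gluing an $N$-piece of length $km$ with $0<k<p$ to a $T$-piece: it factors essentially all of $S$ into $\ell+1$ blocks of length $p$ whose $\varphi$-images are product-one, pads with blocks $T_j$ having $\pi(T_j)=\{1\}$ to reach exactly $m$ blocks (length $mp$), and uses Lemma~\ref{genKneser} on the conjugate sets $\{u_j,u_j^r,\ldots,u_j^{r^{p-1}}\}$ to show the needed product lands on $1$. Relatedly, your plan to ``feed in a fresh $T$ drawn from some non-identity coset'' at each stage presumes a supply of disjoint length-$p$ subsequences each inside a single coset $N_j$, which the hypothesis ($|S_{N_i}|\ge p$ for \emph{one} $i$) does not provide; the paper's blocks $T_j$ are instead arbitrary preimages of minimal product-one sequences over $G/N$ and may mix cosets.

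The structural gap is more serious: you attribute the conclusion $p=2$, $m=3$ to ``careful accounting via Lemma~\ref{mu},'' but the Kneser-type counting in Lemma~\ref{mu} only ever produces the contradiction $1\in\Pi_{mp}(S)$; it cannot by itself single out $C_2\ltimes C_3$ or the sequence $x\bm\cdot xy\bm\cdot xy^2\bm\cdot 1^{[5]}$. The actual mechanism in the paper is a reduction, in the borderline subcases where the counting falls short by one or two, to the statement that $S\bm\cdot S_N^{[-1]}$ is a product-one free sequence of length exactly $m+p-2=\mathsf d(G)$ supported in a single nontrivial coset of $N$ (with $S_N=1^{[mp-1]}$), at which point Theorem~\ref{inversed(G)} is invoked: form (i) of that theorem requires $m-1$ terms inside $N$, which is impossible here, so form (ii) forces $G\cong C_2\ltimes C_3$ and $S\bm\cdot S_N^{[-1]}=x\bm\cdot xy\bm\cdot xy^2$. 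Your proposal never invokes Theorem~\ref{inversed(G)}, and without it there is no route to the exceptional case; a ``finite case analysis in this tiny group'' cannot start until you have already proved the group is tiny.
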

\begin{proof}
Let $T_0$ be any subsequence of $S_{N_i}$ with $|T_0|=p$. Then $\varphi(T_0)=(\bar{x}^i)^{[p]}$ is a minimal product-one subsequence over $G/N$, where $\bar{x}=\varphi(x)$. Let $\ell$ be the maximal integer such that
\begin{align*}
S=T_0\bm\cdot T_1\bm\cdot\ldots\bm\cdot T_{\ell}\bm\cdot S',
\end{align*}
where $\varphi(T_j)$ is a product-one subsequence over $G/N$ with $|T_j|=p$ for every $j\in[1,\ell]$. By rearranging the order of $T_1, T_2, \ldots, T_{\ell}$ if necessary, we may assume that
$\pi(T_j)\neq \{1\}$ for all $j\in[1,v]$ and $\pi(T_j)=\{1\}$ for all $j\in [v+1,\ell]$. Since $G/N\cong C_p$, by the maximality of $\ell$, we conclude that $\varphi(S')$ has no product-one subsequence of length $p$. Thus
$$|S\bm\cdot (T_0\bm\cdot T_1\bm\cdot\ldots\bm\cdot T_{\ell})^{[-1]}|=|S'|=|\varphi(S')|\leq \mathsf E(G/N)-1=2p-2.$$
Therefore, $p\ell=|S|-|S'|-|T_0|\geq |S|-2p+2-p=mp+m-2p-1$. By Lemma~\ref{mequiv1}~(i), $m\equiv1 \pmod p$. Thus $\ell\geq m+(m-1)/p-1$ or $\ell=m+(m-1)/p-2$.
\\

\noindent {\bf Case 1.} $\ell\geq m+(m-1)/p-1$. We will find a contradiction and thus this case is impossible.

As $p\ell\leq |S|-|T_0|=mp+m-3$, we have $\ell\leq m+\frac{m-1}{p}-\frac{2}{p}<m+\frac{m-1}{p}$. Thus $\ell=m+(m-1)/p-1$. Let $1\neq u_j\in\pi(T_j)$ for every $j\in [1,v]$ and $1=u_j\in\pi(T_j)$ for every $j\in [v+1,\ell]$. Let $A_0=\pi(T_0)$, $A_j=\{u_{j},u_{j}^{r},\ldots,u_{j}^{r^{p-1}}\}$ for all $j\in [1,v]$, and $A_j=\{1\}$ for all $j\in [v+1,\ell]$. By Lemma~\ref{conjugationtwo},
$$\pi(T_0\bm\cdot T_{j_1}\bm\cdot\ldots\bm\cdot T_{j_{m-1}})\supseteq A_0A_{j_1}\ldots A_{j_{m-1}}$$
for every $(m-1)$-subset $\{j_1,\ldots,j_{m-1}\}\subseteq [1,\ell]$. Let $\mathbf{A}=(A_1,\ldots, A_{\ell})$. Then
$$\Pi_{mp}(S)\supseteq A_0\Pi^{m-1}(\mathbf{A}).$$
Let $M=\mbox{stab}(\Pi^{m-1}(\mathbf{A}))$. Recall that $I_M$ is the subset of $[1,\ell]$ such that $j\in I_M$ if and only if $A_j\subseteq M$. Since $M$ is a cyclic subgroup of $N$, if $|I_M|\geq (m/|M|) |M|+|M|-1=m+|M|-1$, then by Lemma~\ref{knzerosumfree}~(i), there exists a subset $\{j_1,\ldots, j_m\}\subseteq I_M$ such that $1\in \pi(T_{j_1}\bm\cdot \ldots\bm\cdot T_{j_m})$. Since $|T_{j_1}\bm\cdot \ldots\bm\cdot T_{j_m}|=mp$, $1\in\Pi_{mp}(S)$, yielding a contradiction. Thus $|I_M|\leq m+|M|-2$.

We now show $|\Pi^{m-1}(\mathbf{A})|\geq m$. As in the proof of Lemma~\ref{1inpiU}, we may always assume that $M\lneq N$. Let $$\mu=|\{Q\in N/M: |V_Q|\geq m\}|.$$
If $\mu=0$, then by Lemma~\ref{mu}~(i), we have
\begin{align*}
|\Pi^{m-1}(\mathbf{A})|\geq& |M|(2-m+p\ell-(p-1)|I_M|)\\
\geq& 2-m+(mp+m-p-1)-(p-1)(m-1)\\
&(\mbox{as } p\ell=mp+m-p-1 \mbox{ and } |I_M|=|V_M|\leq m-1)\\
=& m.
\end{align*}
If $\mu\geq 2$, then by Lemma~\ref{mu}~(ii),
\begin{align*}
|\Pi^{m-1}(\mathbf{A})|\geq m|M|\geq m.
\end{align*}
If $\mu=1$, then let $R\in N/M$ be the unique coset of $M$ such that $|V_R|\geq m$. By Lemma~\ref{IM}~(iii), $R=M$. Note that $M\neq \{1\}$, as $|I_{\{1\}}|\leq m-1$. Thus by Lemma~\ref{mu}~(iii),
\begin{align*}
|\Pi^{m-1}(\mathbf{A})|\geq& |M|(1+p(\ell-|I_M|))\\
\geq& |M|(1+mp+m-p-1-p(m+|M|-2))\\
&(\mbox{as } p\ell=mp+m-p-1\mbox{ and } |I_M|\leq m+|M|-2)\\
=& |M|(m-p(|M|-1))\\
=& (m-p|M|)(|M|-1)+m \\
\geq& m\ \ \ \ \ (\mbox{as } m\geq (p+1)|M|).
\end{align*}
In all the cases, we have shown $|\Pi^{m-1}(\mathbf{A})|\geq m$. Thus $1\in N\subseteq \Pi^{m-1}(\mathbf{A})\subseteq A_0\Pi^{m-1}(\mathbf{A})\subseteq\Pi_{mp}(S)$, yielding a contradiction, and we are done.
\\

\noindent {\bf Case 2.} $\ell=m+(m-1)/p-2$.

Then $|S\bm\cdot T_0^{[-1]}|=mp+m-3=\ell p+2p-2$. By the maximality of $\ell$, $\varphi(S\bm\cdot T_0^{[-1]})$ has no product-one subsequence with length $(\ell+1) p$ over $G/N\cong C_p$. By Lemma~\ref{knzerosumfree}~(ii), $$\varphi(S\bm\cdot T_{0}^{[-1]})=(\bar{x}^{a_1})^{[k_1p-1]}\bm\cdot (\bar{x}^{a_2})^{[k_2p-1]},$$
where $\bar{x}=\varphi(x)$, $k_1+k_2=m+(m-1)/p$ and $0\leq a_1<a_2\leq p-1$. Let
$$S\bm\cdot T_0^{[-1]}=S_1\bm\cdot S_2$$
such that
$$\varphi(S_1)=(\bar{x}^{a_1})^{[k_1p-1]}\mbox{\ \ \  and \ \ \ }\varphi(S_2)=(\bar{x}^{a_2})^{[k_2p-1]}.$$
Then we may factor $S_1$ as follows
$$S_1=S_{11}\bm\cdot\ldots\bm\cdot S_{1(v_1+w_1)}\bm\cdot W_1,$$
where $\pi(S_{1i})\neq \{1\}$ for $i\in[1,v_1]$, $\pi(S_{1i})=\{1\}$ for $i\in[v_1+1,v_1+w_1]$, $|S_{1i}|=p$ for $i\in [1,v_1+w_1]$ and $|W_{1}|=p-1$. Similarly, let
$$S_2=S_{21}\bm\cdot\ldots\bm\cdot S_{2(v_2+w_2)}\bm\cdot W_2,$$
where $\pi(S_{2i})\neq \{1\}$ for $i\in[1,v_2]$, $\pi(S_{2i})=\{1\}$ for $i\in[v_2+1,v_2+w_2]$, $|S_{2i}|=p$ for $i\in [1,v_2+w_2]$ and $|W_{2}|=p-1$. Then, we get a new factorization of $S$ as follows
\begin{align}\label{factorization}
S=T_0&\bm\cdot S_{11}\bm\cdot\ldots\bm\cdot S_{1v_1}\bm\cdot S_{21}\bm\cdot\ldots\bm\cdot S_{2v_2}\\
& \bm\cdot S_{1(v_1+1)}\bm\cdot\ldots\bm\cdot S_{1(v_1+w_1)}\bm\cdot S_{2(v_2+1)}\bm\cdot\ldots\bm\cdot S_{2(v_2+w_2)}\bm\cdot (W_1\bm\cdot W_2).  \notag
\end{align}
By using this new factorization of $S$, as in Case $1$ we can choose $A_1,\ldots, A_{\ell}$ from \\ $\pi(S_{11}),\ldots, \pi(S_{2(v_2+w_2)})$ with the same properties such that $$w_1+w_2=|I_{\{1\}}|,$$
where $\ell=v_1+v_2+w_1+w_2=m+(m-1)/p-2$.

Let $\mathbf{A}=(A_1,\ldots,A_{\ell})$ and $M=\mbox{stab}(\Pi^{m-1}(\mathbf{A}))$. A similar computation with $p\ell=mp+m-2p-1$ shows that $|\Pi^{m-1}(\mathbf{A})|\geq m$, except for the case when $\mu=0$ and $M=\{1\}$, and this yields a contradiction. Next, we deal with the case when $\mu=0$ and $M=\{1\}$. Note that we always have $|I_{\{1\}}|\leq m-1$. According to $|I_{\{1\}}|$, we divide the remaining proof of Case 2 into the following three subcases. We will show that the first two subcases are impossible.
\\

\noindent {\bf Subcase 2.1.} $|I_{\{1\}}|\leq m-3$.

Since $M=\{1\}$, by Lemma~\ref{mu}~(i), we have
\begin{align*}
|\Pi^{m-1}(\mathbf{A})|\geq& 2-m+p\ell-(p-1)|I_{\{1\}}|\\
\geq& 2-m+(mp+m-2p-1)-(p-1)(m-3))\\
&(\mbox{as } p\ell=mp+m-2p-1 \mbox{ and } |I_{\{1\}}|\leq m-3)\\
=& m+p-2\geq m.
\end{align*}
As in Case 1, this yields a contradiction and so this subcase is impossible.
\\

\noindent {\bf Subcase 2.2.} $|I_{\{1\}}|=m-2$.

As in Subcase 2.1, we have
$$|\Pi^{m-1}(\mathbf{A})|\geq 2-m+p\ell-(p-1)|I_{\{1\}}|= m-1.$$
If $\pi(T_0)\neq \{1\}$, then by Lemma~\ref{conjugationone}, $|\pi(T_0)|\geq p$. Therefore, $|\pi(T_0)|+|\Pi^{m-1}(\mathbf{A})|>m$. By Lemma~\ref{A+B>G}, $\pi(T_0)\Pi^{m-1}(\mathbf{A})=N$ and thus $1\in\pi(T_0)\Pi^{m-1}(\mathbf{A})$, yielding a contradiction. Therefore,
$$\pi(T_0)= \{1\}.$$

We remark that for every factorization of $S$ of the form \eqref{factorization} with $T_0$ fixed, we always have
\begin{align}\label{m-2}
w_1+w_2=|I_{\{1\}}|=m-2.
\end{align}
For otherwise, if $|I_{\{1\}}|\leq m-3$, as in Subcase 2.1, we obtain a contradiction. If $|I_{\{1\}}|= m-1$, then $1\in \pi(T_0\bm\cdot S_{1(v_1+1)}\bm\cdot\ldots\bm\cdot S_{1(v_1+w_1)}\bm\cdot S_{2(v_2+1)}\bm\cdot\ldots\bm\cdot S_{2(v_2+w_2)})$. Since $|T_0\bm\cdot S_{1(v_1+1)}\bm\cdot\ldots\bm\cdot S_{1(v_1+w_1)}\bm\cdot S_{2(v_2+1)}\bm\cdot\ldots\bm\cdot S_{2(v_2+w_2)}|=mp$, we have $1\in\Pi_{mp}(S)$, yielding a contradiction as well.

Next, we prove that
\begin{align}
\mbox{ if } w_i\geq 1 \mbox{ for some } i\in [1,2], \mbox{ then } |\supp(S_i)|=1 \mbox{ and thus } v_i=0. \label{w_i>0}
\end{align}
We first show that $|\supp(W_i)|=1$, i.e., $W_i=g_i^{[p-1]}$ for some $g_i\in G$. Assume to the contrary that there exists $g_i'\bm\cdot g_i\mid W_i$ such that $g_i'\neq g_i$. Since $w_1\geq 1$, let $g\mid S_{i(v_i+1)}$. Since $g_i'\neq g_i$, without loss of generality, suppose $g\neq g_i$. Let $S_{i(v_i+1)}'=S_{i(v_i+1)}\bm\cdot g^{[-1]}\bm\cdot g_i$ and $W_{i}'=W_{i}\bm\cdot g_i^{[-1]}\bm\cdot g$. If $g_i\in N$, then clearly $\pi(S_{i(v_i+1)}')\neq \{1\}$. If $g_i\in G\setminus N$, then by Lemma~\ref{pit=1}, $\pi(S_{i(v_i+1)}')\neq \{1\}$. Thus there exists a new factorization of the form \eqref{factorization} with fixed $T_0$ such that $|I_{\{1\}}|=m-3$, yielding a contradiction to \eqref{m-2}. Therefore, $W_i=g_i^{[p-1]}$.

Next, we show $S_{i(v_i+1)}\bm\cdot \ldots\bm\cdot S_{i(v_i+w_i)}\bm\cdot W_i=g_i^{[w_ip+p-1]}$. Assume to the contrary that there exists $g_i'\mid S_{ij}$ such that $g_i'\neq g_i$ for some $j\in [v_i+1,v_i+w_i]$. Let $S_{ij}'=S_{ij}\bm\cdot g_i'^{[-1]}\bm\cdot g_i$ and $W_{i}'=W_{i}\bm\cdot g_i^{[-1]}\bm\cdot g_i'$. As above, $\pi(S_{ij}')\neq \{1\}$, yielding a contradiction. Therefore, $S_{i(v_i+1)}\bm\cdot \ldots\bm\cdot S_{i(v_i+w_i)}\bm\cdot W_i=g_i^{[pw_i+p-1]}$.

We now show that $S_i=g_i^{[(v_i+w_i)p+p-1]}$. Assume to the contrary that there exists $g_i'\mid S_{ij}$ for some $j\in [1,v_i]$ such that $g_i'\neq g_i$. Let $S_{ij}'=S_{ij}\bm\cdot g_i'^{[-1]}\bm\cdot g_i$ and $W_{i}'=W_{i}\bm\cdot g_i^{[-1]}\bm\cdot g_i'$. Then there exists a new factorization of the form \eqref{factorization} with fixed $T_0$ such that $w_i\geq 1$ and $|\supp(W_i')|=2$, hence yielding a contradiction to $|\supp(W_i)|=1$. Thus, $S_i=g_i^{[|S_i|]}$ and clearly $v_i=0$. This completes the proof of \eqref{w_i>0}.

Next, we prove that
$$a_1=0\mbox{ and }w_1=0.$$
If $a_1\neq 0$, then $S_N$ is empty and so $p\geq 3$. If both
$w_1\geq 1$ and $w_2\geq 1$, then by \eqref{w_i>0}, we have $v_i=0$ and $|S_i|=w_ip+p-1$ for $i\in [1,2]$. Therefore, $mp+m+p-3=|S|=|T_0|+|S_1|+|S_2|=p+(w_1p+p-1)+(w_2p+p-1)=3p-2+(w_1+w_2)p=mp+p-2$, yielding a contradiction. Therefore, without loss of generality, we may assume $w_2=0$, and then $w_1=m-2$. As above, $|S_1|=w_1p+p-1=mp-p-1$. Therefore, $|S_2|=|S|-|T_0|-|S_1|=mp+m+p-3-p-(mp-p-1)=m+p-2$. Note that $S_2$ is contained in exactly one coset of $N$ and $p\geq 3$. By Theorem~\ref{inversed(G)}, $S_2$ is not product-one free. Thus, there exists a product-one subsequence $T'\mid S_2$. Consequencly, $\varphi(T')$ is product-one over $G/N\cong C_p$. Since $\supp(\varphi(T'))=\{\bar{x}^{a_2}\}$ and $a_2\in [1,p-1]$, we have $p$ divides $|T'|$. Since $\pi(T_0)=\{1\}$ and $w_1=m-2$, we have that $T'\bm\cdot T_0\bm\cdot g_1^{[mp-|T_0|-|T'|]}$ is a product-one subsequence with length $mp$, yielding a contradiction and thus $a_1=0$.

Next, we show that $w_1=0$. Since $a_1=0$, we have $S_1=S_N$. If $w_1\geq 1$, then by \eqref{w_i>0} we have $S_1=g_1^{[|S_1|]}$ and $v_1=0$. Moreover, $g_1^{p}=1$ and so $g_1=1$. If $w_2\geq 1$, then as above, we obtain a contradiction. Thus, $w_2=0$, $w_1=m-2$, and $|S_2|=|S|-|T_0|-|S_1|=m+p-2$. If $S_2$ is not product-one free then, as above, we obtain a contradiction. Thus, $S_2$ must be product-one free. Since $S_2$ is contained in exactly one coset of $N$, by Theorem~\ref{inversed(G)}, $G\cong C_2\ltimes C_3$ and $S_2=x\bm\cdot xy \bm\cdot xy^{2}$. Thus, $S=x\bm\cdot xy \bm\cdot xy^{2}\bm\cdot T_0\bm\cdot 1^{[3]}$. Since $\pi(T_0)= \{1\}$, we conclude that either $T_0=x^{[2]}$, or $T_0=(xy)^{[2]}$, or $T_0=(xy^2)^{[2]}$. It is easy to check that $1\in\Pi_6(S)$, yielding a contradiction which proves that $w_1=0$.

We now have $w_2=m-2$, $|S_2|=w_2p+p-1=mp-p-1$, and so $|S_1|=|S|-|T_0|-|S_2|=mp+m+p-3-p-p(m-2)-(p-1)=m+p-2$. By Lemma~\ref{1inpiU}, there exists a subsequence $U\mid S_1$ with $p||U|$ such that either $1\in \pi(U)$ or $1\in \pi(T_0\bm\cdot U)$. Since $1\in\pi(T_0)$, in both cases we have that $1\in \pi(T_0\bm\cdot U)$. Let $t=|U|/p$, then $t\geq 1$ and $1\in \pi(T_0\bm\cdot U \bm\cdot S_{2(v_2+1)}\bm\cdot\ldots\bm\cdot S_{2(v_2+m-1-t)})$. Clearly, $1\in \Pi_{mp}(S)$, yielding a contradiction, and so this subcase is also impossible.
\\

\noindent {\bf Subcase 2.3.} $|I_{\{1\}}|=m-1$.

We have
$$w_1+w_2=|I_{\{1\}}|=m-1.$$
If $\pi(T_0)=\{1\}$, then $1\in \pi(T_0\bm\cdot S_{1(v_1+1)}\bm\cdot\ldots\bm\cdot S_{1(v_1+w_1)}\bm\cdot S_{2(v_2+1)}\bm\cdot\ldots\bm\cdot S_{2(v_2+w_2)})\subseteq \Pi_{mp}(S)$, yielding a contradiction. Thus $$\pi(T_0)\neq \{1\}.$$

We remark that for every factorization of $S$ of the form \eqref{factorization} with $T_0$ fixed, we always have
$$w_1+w_2=|I_{\{1\}}|=m-1.$$
For otherwise, if $|I_{\{1\}}|\leq m-3$, as in Subcase 2.1, we obtain a contradiction. If $|I_{\{1\}}|= m-2$, as in Subcase 2.2, we have $\pi(T_0)= \{1\}$, again yielding a contradiction.

Next, we prove that
$$a_1=0 \mbox{ and } w_2=0.$$
If $w_2\geq 1$, then by replacing $T_0$ with $S_{2(v_2+1)}$, we get a new factorization
$$S=T_0'\bm\cdot S_{11}\bm\cdot\ldots\bm\cdot S_{1v_1}\bm\cdot S_{21}\bm\cdot\ldots\bm\cdot S_{2v_2}\bm\cdot S_{1(v_1+1)}\bm\cdot\ldots\bm\cdot S_{1(v_1+w_1)}\bm\cdot S_{2(v_2+1)}'\bm\cdot\ldots\bm\cdot S_{2(v_2+w_2)}\bm\cdot (W_1\bm\cdot W_2),$$
with $\pi(T_0')=\{1\}$ and $|I_{\{1\}}'|=w_1+w_2-1=m-2$. As in Subcase 2.2, we deduce a contradiction.

If $w_2=0$ and $a_1\neq 0$, then $w_1\geq 1$. By replacing $T_0$ with $S_{1(v_1+1)}$, we get a new factorization
$$S=T_0'\bm\cdot S_{11}\bm\cdot\ldots\bm\cdot S_{1v_1}\bm\cdot S_{21}\bm\cdot\ldots\bm\cdot S_{2v_2}\bm\cdot S_{1(v_1+1)}'\bm\cdot\ldots\bm\cdot S_{1(v_1+w_1)}\bm\cdot S_{2(v_2+1)}\bm\cdot\ldots\bm\cdot S_{2(v_2+w_2)}\bm\cdot (W_1\bm\cdot W_2),$$
with $\pi(T_0')=\{1\}$ and $|I_{\{1\}}'|=w_1+w_2-1=m-2$. As above, we obtain a contradiction. So $a_1=0$ and $w_2=0$.

Then $S_1=S_N$ and $w_1=m-1$. As in the proof of \eqref{w_i>0}, we have
$$S_1=g_1^{[mp-1]}$$
for some $g_1\in S_N$. Since $w_1=m-1>0$, we have $g_1^{p}=1$ and thus $g_1=1$. If $S\bm\cdot S_N^{[-1]}=S\bm\cdot S_1^{[-1]}$ is not product-one free, then let $U\mid S\bm\cdot S_N^{[-1]}$ with maximal length such that $1\in \pi(U)$. Thus, $1\in \pi(U\bm\cdot 1^{[mp-|U|]})$ and so $1\in \Pi_{mp}(S)$, yielding a contradiction. Hence, $S\bm\cdot S_N^{[-1]}$ is product-one free. Note that $|S\bm\cdot S_N^{[-1]}|=mp+m+p-3-(mp-1)=m+p-2$. By Theorem~\ref{inversed(G)}, we have $G\cong C_2\ltimes C_3$ and so $S\bm\cdot S_1^{[-1]}=x\bm\cdot xy \bm\cdot xy^{2}$, thus $S=x\bm\cdot xy \bm\cdot xy^{2}\bm\cdot  1^{[5]}$ as desired.
\end{proof}

\begin{lemma}\label{SNi<p}
Let $S$ be a sequence over $G$ with length $mp+m+p-3$ such that $1\notin \Pi_{mp}(S)$. If $|S_{N_i}|< p$ for all $i\in [1,p-1]$, then
$$\varphi(S\bm\cdot S_N^{[-1]}) \mbox{ is product-one free over } G/N.$$
\end{lemma}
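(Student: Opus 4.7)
The plan is a proof by contradiction. Assume that $\varphi(T)$ contains a product-one subsequence, where $T = S \bm\cdot S_N^{[-1]}$, and aim to produce $1 \in \Pi_{mp}(S)$, contradicting the hypothesis.

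First I would extract a short piece. Choose $T_0 \mid T$ so that $\varphi(T_0)$ is a minimal (in the sense that it has no proper nontrivial product-one subsequence) product-one subsequence of $\varphi(T)$, and write $t_0 = |T_0|$. Since $T$ contains no element of $N$, the sequence $\varphi(T_0)$ consists entirely of non-identity elements of $G/N \cong C_p$, so $t_0 \geq 2$. If $t_0 = p$, then $\varphi(T_0)$ is a minimal product-one sequence of length $p$ in the cyclic group $C_p$ consisting of non-identity elements, which forces $\varphi(T_0) = (\bar x^i)^{[p]}$ for some $i \in [1, p-1]$; but then $T_0 \subseteq N_i$, giving $|S_{N_i}| \geq p$ and contradicting the hypothesis. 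Hence $2 \leq t_0 \leq p-1$, and $\varphi(T_0)$ must visit at least two distinct nontrivial cosets of $N$.

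Next I would mirror the setup used in Case~1 of Lemma~\ref{SNigeqp}. Greedily factor
$$S = T_0 \bm\cdot T_1 \bm\cdot \ldots \bm\cdot T_\ell \bm\cdot W \bm\cdot S_N,$$
where each $\varphi(T_j)$ (for $j \geq 1$) is a minimal product-one subsequence over $G/N$ (so $|T_j| \leq p-1$ by the same argument as for $T_0$) and $\varphi(W)$ is product-one free over $G/N \cong C_p$ (so $|W| \leq p-1$ by Lemma~\ref{n-1}). For $j \geq 1$ pick $u_j \in \pi(T_j) \cap N$ and set $A_j = \{u_j, u_j^r, \ldots, u_j^{r^{p-1}}\}$. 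Using Lemmas~\ref{conjugationone} and~\ref{conjugationtwo} (or a suitable variant), I would derive an inclusion of the form
$$\pi\bigl(T_0 \bm\cdot T_{j_1} \bm\cdot \ldots \bm\cdot T_{j_{m-1}} \bm\cdot U\bigr) \;\supseteq\; \pi(T_0) \cdot A_{j_1} \cdots A_{j_{m-1}} \cdot \Pi(U)$$
for suitable indices and $U \mid S_N$, then invoke Kneser's theorem and its generalization (Lemmas~\ref{Kneser} and~\ref{genKneser}) together with the cyclic Davenport result (Lemma~\ref{knzerosumfree}) applied in $N \cong C_m$ to show that this set equals $N$ and contains $1$, realized by a subsequence of $S$ of length exactly $mp$. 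That is the desired contradiction.

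The main obstacle is that Lemma~\ref{conjugationtwo} is stated only for a $T_0$ consisting of $p$ elements in a single nontrivial coset $N_i$, whereas our $T_0$ has $t_0 < p$ elements and must straddle at least two distinct cosets $N_i$ and $N_j$ with $i \neq j$. One must either prove a refined version of Lemma~\ref{conjugationtwo} that covers this mixed-coset, short-length situation, or replace it with an ad hoc orbit-generation step: sub-products of $T_0$ realize at least the two conjugating exponents $r^i$ and $r^j$ on any $u \in N$, and since $\gcd(r^{i-j}-1, m) = 1$ by Lemma~\ref{mequiv1}(i), the two conjugates $u^{r^i}, u^{r^j}$ force the Kneser stabilizer of the resulting product set in $N$ to be large enough to cover $N$ entirely. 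Carrying this orbit-generation argument through while matching the length exactly $mp$ is the principal technical work.
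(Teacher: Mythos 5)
Your contradiction strategy matches the spirit of only part of the paper's argument, and the two places where you defer the work are exactly where the paper has to do something you have not accounted for. First, you never separate out the case where $S$ contains many copies of the identity. Since $|S\bm\cdot S_N^{[-1]}|\le (p-1)^2$, the sequence $S_N$ has length at least $mp+m+p-3-(p-1)^2$, and nothing prevents most or all of it from being $1^{[\,\cdot\,]}$. In that situation every auxiliary set you attach to a term of $S_N$ is $\{1\}$, so no Kneser-type estimate on a product of these sets can ever reach $|N|=m$, and your plan of showing ``this set equals $N$'' collapses. The paper splits on $n=\mathsf v_1(S)$: when $n\ge m+p-2$ it abandons the product-set machinery entirely, strips $m+p-2$ ones to get $S'$ of length $mp-1$, takes a maximal product-one subsequence $T\mid S'$, and either pads $T$ with ones to length exactly $mp$ (contradiction) or identifies $S'\bm\cdot T^{[-1]}$ via Theorem~\ref{inversed(G)} and Lemma~\ref{normalsequence} to conclude directly that $S\bm\cdot S_N^{[-1]}=(x^ay^{b_1})\bm\cdot\ldots\bm\cdot(x^ay^{b_{p-1}})$. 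Your sketch has no analogue of this case.

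Second, the step you label ``the principal technical work'' is a genuine gap, and your proposed repair does not hold up. Having two conjugates $u^{r^i},u^{r^j}$ in a factor, together with $\gcd(r^{i-j}-1,m)=1$, tells you the difference set of that factor contains a generator of $\langle u\rangle$; it does not force the stabilizer of the resulting product set to be large, let alone all of $N$ (stabilizers of sumsets can be trivial no matter what the summands contain). The paper avoids the mixed-coset problem with a different device: it keeps the whole decomposition $W_1\bm\cdot\ldots\bm\cdot W_k\bm\cdot W'$ of $S\bm\cdot S_N^{[-1]}$, fixes a single term $w_e=x^iy^j$ of $W_k$, and inserts each $u_t\mid S_N$ either before or after $w_e$, producing two-element sets $A_t=\{u_t,u_t^{r^i}\}$ on which Lemmas~\ref{genKneser} and~\ref{mu} are run with the exact length parameter $s=mp-|W_1\bm\cdot\ldots\bm\cdot W_k|$ (which satisfies $mp-(p-1)^2\le s\le |S_N|-m+2$, so the length-$mp$ bookkeeping closes). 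No strengthening of Lemma~\ref{conjugationtwo} is needed or attempted. As written, your proposal establishes the easy observations ($t_0\le p-1$, the blocks straddle two cosets) but leaves both the many-identities case and the covering argument unproved.
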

\begin{proof}
Since $|S_{N_i}|< p$ for all $i\in [1,p-1]$, we have $|S\bm\cdot S_N^{[-1]}|=|S_{N_1}\bm\cdot\ldots\bm\cdot S_{N_{p-1}}|\leq (p-1)^2$. Let $n=\mathsf v_1(S)$. We divide the proof into the following two cases. \\

\noindent {\bf Case 1.} $n\geq p+m-2$.\\

Let $S'=S\bm\cdot 1^{[-(m+p-2)]}$ be the subsequence of $S$ obtained by removing $m+p-2$ terms of $1$ from $S$. Since $|S'|=|S|-(m+p-2)=mp-1\geq \mathsf d(G)+1$, $S'$ has a product-one subsequence. Let $T\mid S'$ be a product-one subsequence with maximal length. Then $S'\bm\cdot T^{[-1]}$ is product-one free and thus $|S'\bm\cdot T^{[-1]}|\leq \mathsf d(G)=m+p-2$.

If $|S'\bm\cdot T^{[-1]}|\leq m+p-3$, then $(mp-1)-(m+p-3)\leq |T|\leq mp-1$, and thus $1\leq mp-|T|\leq m+p-2$. Hence, $1\in \pi(T \bm\cdot 1^{[mp-|T|]})$. Since $|T\bm\cdot 1^{[mp-|T|]}|=mp$, we have $1\in \Pi_{mp}(S)$, yielding a contradiction.

Therefore, $|S'\bm\cdot T^{[-1]}|=m+p-2=\mathsf d(G)$. Since $|S_{N_i}|\leq p-1$ for all $i\in [1,p-1]$ and $S'\bm\cdot T^{[-1]}$ is product-one free, by Theorem~\ref{inversed(G)} we have $S'\bm\cdot T^{[-1]}=(x^ay^{b_1})\bm\cdot\ldots\bm\cdot (x^ay^{b_{p-1}})\bm\cdot (y^c)^{[m-1]}$, where $a\in[1,p-1]$, $b_i\in[0,m-1]$ for $i\in[1,p-1]$, $c\in[1,m-1]$, and $\mbox{gcd}(c,m)=1$. Note that $|T|=|S'|-\mathsf d(G)$. By Lemma~\ref{normalsequence}, we have that if $\mathsf v_g(S')\geq 1$, then $\mathsf v_g(S'\bm\cdot T^{[-1]}))\geq 1$ for all $g\in G\setminus \{1\}$. We conclude that $T=T_N$. Otherwise, if there exists a term $g\mid T$ with $g\notin N$, then $g=x^ay^{b_j}$ for some $j\in [1, p-1]$. Thus, $|S_{N_a}|\geq p$, yielding a contradiction.  Therefore, $S\bm\cdot S_N^{[-1]}=(x^ay^{b_1})\bm\cdot\ldots\bm\cdot (x^ay^{b_{p-1}})$, and so clearly, $\varphi(S\bm\cdot S_N^{[-1]})$ is product-one free over $G/N$.\\

\noindent {\bf Case 2.} $n\leq p+m-3$.\\

Assume to the contrary that $\varphi(S\bm\cdot S_N^{[-1]})$ is not product-one free over $G/N$, then we can find a factorization
$$S\bm\cdot S_N^{[-1]}=W_1\bm\cdot \ldots\bm\cdot W_k \bm\cdot W'$$
with $k\geq 1$, where $\varphi(W_i)$ is a minimal product-one subsequence over $G/N$ for $i\in [1,k]$ and $\varphi(W')$ is product-one free over $G/N$. Since $\mathsf d(G/N)=\mathsf d(C_p)=p-1$, we have $|W_k|\leq p$ and $|W'|\leq p-1$. Let $W_k=w_1\bm\cdot \ldots \bm\cdot w_e$ and $w_e=x^iy^j$, where $e=|W_k|$, $1\leq i\leq p-1$ and $0\leq j\leq m-1$. Let $S_N=u_1\bm\cdot\ldots\bm\cdot u_{\ell}$ where $\ell=|S_N|$, $u_t\neq 1$ for $t\in[1,\ell-n]$, and $u_t=1$ for $t\in[\ell-n+1, \ell]$. Then $\ell\geq mp+m+p-3-(p-1)^2$.

We insert each $u_t$ into the product $w_1\ldots w_e$, either before or after $w_e$, where $t\in[1,\ell-n]$. If we put $u_t$ after $w_e$ then it multiplies the product by $u_t$; putting $u_t$ before $w_e$ multiplies the product by $u_t^{r^i}$. Let $A_0=\{w_1\ldots w_e\}$, $A_t=\{u_t, u_t^{r^i}\}$ for $t\in[1,\ell-n]$ and $A_t=\{1\}$ for $t\in[\ell-n+1,\ell]$. Note that $ mp-|W_1\bm\cdot \ldots\bm\cdot W_k|= mp-(|S|-\ell-|W'|)\leq \ell-m+2$. Let $s=mp-|W_1\bm\cdot \ldots\bm\cdot W_k|$, then
$$mp-(p-1)^2\leq s\leq \ell-m+2.$$
Therefore, we have
$$\pi(W_k\bm\cdot u_{i_1}\bm\cdot\ldots\bm\cdot u_{i_s})\supseteq A_0A_{i_1}\ldots A_{i_s}$$
for every $s$-subset $\{i_1,\ldots,i_s\}\subseteq [1,\ell]$. Let $\mathbf{A}=(A_1,\ldots, A_{\ell})$ and $M=\mbox{stab}(\Pi^{s}(\mathbf{A}))$. Recall that $I_M$ is the subset of $[1,\ell]$ such that $j\in I_M$ if and only if $A_j\subseteq M$. As before, if $|I_M|\geq mp+|M|-1$, then there exists a product-one subsequence of $S_M$ with length $mp$, yielding a contradiction. Therefore, $|I_M|\leq \min\{\ell, mp+|M|-2\}$.

We first prove $|\Pi^{s}(\mathbf{A})|\geq m$. As in the proof of Lemma~\ref{1inpiU}, we may always assume that $M\lneq N$. Let $$\mu=|\{Q\in N/M: |V_Q|\geq s+1\}|.$$ If $\mu=0$, then $|I_M|\leq s$. By Lemma~\ref{mu}~(i),
\begin{align*}
|\Pi^{s}(\mathbf{A})|&\geq |M|(1-s+2\ell-|I_M|)\\
&\geq |M|(1+2(\ell-s))\\
&\geq |M|(1+2(m-2)) \ \ \ \         (\mbox{as } s\leq \ell-m+2)\\
&\geq m.
\end{align*}

If $\mu\geq 2$, then by Lemma~\ref{mu}~(ii),
\begin{align*}
|\Pi^{s}(\mathbf{A})|\geq (s+1)|M| \geq m \ \ \ \ \ \ \ \         (\mbox{as } s\geq mp-(p-1)^2\geq m).
\end{align*}

If $\mu=1$ and $R\neq M$, then by Lemma~\ref{mu}~(iv),
\begin{align*}
|\Pi^{s}(\mathbf{A})|\geq |M|(1+\ell)\geq m.
\end{align*}

If $\mu=1$ and $R=M$, then $V_R=I_M$. Note that $|I_{\{1\}}|\leq m+p-3\leq mp-(p-1)^2\leq s$. Thus, $M=R\neq \{1\}$ and so $|V_R|=|I_M|\leq\min\{\ell, mp+|M|-2\}=mp+|M|-2$. Therefore, by Lemma~\ref{mu}~(iii),
\begin{align*}
|\Pi^{s}(\mathbf{A})|
&\geq |M|(1+2(\ell-|I_M|))\\
&\geq |M|(2(m-|M|-(p-1)(p-2))+1) \ \ \ \ \ \ (\mbox{as } |I_M|\leq mp+|M|-2)\\
&= (m-|M|-(p-1)(p-2))(2|M|-1)-(p-1)(p-2)+m\\
&\geq m \ \ \ \ \ \ \ \ \ \ (\mbox{as } |M|\geq p+1 \mbox{ and } m\geq (p+1)|M|).
\end{align*}
In all the cases, we have shown $|\Pi^{s}(\mathbf{A})|\geq m$, whence $A_0\Pi^{s}(\mathbf{A})=N$. Thus, we can find a subsequence $U\mid u_1\bm\cdot\ldots\bm\cdot u_{\ell}$ with length $|U|=s$ such that $1\in \pi(W_1\bm\cdot \ldots\bm\cdot W_k \bm\cdot U)$. Since $|W_1\bm\cdot \ldots\bm\cdot W_k \bm\cdot U|=mp$, we have $1\in \Pi_{mp}(S)$, yielding a contradiction. This completes the proof of the lemma.
\end{proof}

We are now in position to prove our second main result.\\

\noindent {\bf Proof of Theorem \ref{inverseE(G)}.}\\

$(a)\Rightarrow (b)$. Let $S$ be a sequence over $G$ with length $mp+m+p-3$ such that $1\notin \Pi_{mp}(S)$.

(i) Since $G\not\cong C_2\ltimes C_3$, by Lemmas~\ref{SNigeqp} and \ref{SNi<p}, we have $\varphi(S\bm\cdot S_N^{[-1]})$ is product-one free over $G/N$, so $|S\bm\cdot S_N^{[-1]}|\leq p-1$. If $|S_N|\geq m(p-1)+2m-1=mp+m-1$, by Lemma~\ref{knzerosumfree}~(i), there exist $p$ disjoint product-one subsequences $T_1, \ldots, T_p$ of $S_N$ with length $|T_j|=m$ for all $j\in [1,p]$. Therefore, $T_1\bm\cdot\ldots\bm\cdot T_p$ is a product-one subsequence of $S$ with length $mp$, yielding a contradiction. Hence $|S_N|\leq mp+m-2$. Since $p-1\geq |S\bm\cdot S_N^{[-1]}|=|S|-|S_N|\geq p-1$, we have $|S_N|=mp+m-2$ and $|S\bm\cdot S_N^{[-1]}|= p-1$. Therefore, by Lemma~\ref{n-1}, $|\supp(\varphi(S\bm\cdot S_N^{[-1]}))|=1$ and thus $S\bm\cdot S_N^{[-1]}$ is contained in a single coset of $N$, so $S\bm\cdot S_N^{[-1]}=(x^ay^{b_1})\bm\cdot\ldots\bm\cdot (x^ay^{b_{p-1}})$ where $a\in [1,p-1]$ and $b_i\in [0,m-1]$ for every $i\in[1,p-1]$.  Since $1\notin\Pi_{mp}(S_N)$ and $|S_N|=mp+m-2$, by Lemma~\ref{knzerosumfree}~(ii), $S_N=(y^{c_1})^{[k_1m-1]}\bm\cdot (y^{c_2})^{[k_2m-1]}$ where $c_1,c_2\in[0,m-1]$, $\mbox{gcd}(c_1-c_2,m)=1$ and $k_1+k_2=p+1$. Putting this together, we obtain
\begin{align}\label{Gextremal}
S=(S\bm\cdot S_N^{[-1]})\bm\cdot S_N=(x^ay^{b_1})\bm\cdot\ldots\bm\cdot (x^ay^{b_{p-1}})\bm\cdot (y^{c_1})^{[k_1m-1]}\bm\cdot (y^{c_2})^{[k_2m-1]}
\end{align}
as desired.

(ii) Since $G\cong C_2\ltimes C_3$, we have $p=2$ and $m=3$. If $|S_{N_1}|\geq p=2$, then by Lemma~\ref{SNigeqp}, $S=x\bm\cdot xy \bm\cdot xy^{2}\bm\cdot  1^{[5]}$, as desired. If $|S_{N_1}|< p=2$, then by Lemma~\ref{SNi<p},
$\varphi(S\bm\cdot S_N^{[-1]})$ is product-one free over  $G/N$. As in (i), we have $S$ is of the form described in \eqref{Gextremal} and thus $S=(xy^{b})\bm\cdot (y^{c_1})^{[5]}\bm\cdot (y^{c_2})^{[2]}$, where $b, c_1,c_2\in [0,2]$ with $\mbox{gcd}(c_1-c_2,3)=1$.

$(b)\Rightarrow (a)$. (i) Let $S'$ be any subsequence of $S$ with length $|S'|=mp$. To show that $S$ is $|G|$-product-one free (i.e., $1\notin \Pi_{mp}(S)$), it suffices to show that $1\notin \pi(S')$. Write $S'=S_1'\bm\cdot S_2'$ such that $S_1'\mid (x^ay^{b_1})\bm\cdot\ldots\bm\cdot (x^ay^{b_{p-1}})$ and $S_2'\mid (y^{c_1})^{[k_1m-1]}\bm\cdot (y^{c_2})^{[k_2m-1]}$. If $S_1'\neq \emptyset$, then $\varphi(S')$ is not a product-one sequence over $G/N$ since $a\in[1,p-1]$ and $1\neq |S_1'|\leq p-1$. Thus, $\pi(S')\cap N=\emptyset$ and so $1\notin\pi(S')$. If $S_1'=\emptyset$, then $S'\mid (y^{c_1})^{[k_1m-1]}\bm\cdot (y^{c_2})^{[k_2m-1]}$. Since $|(y^{c_1})^{[k_1m-1]}\bm\cdot (y^{c_2})^{[k_2m-1]}|=mp+m-2$ and $\mbox{gcd}(c_1-c_2,m)=1$, by Lemma~\ref{knzerosumfree}~(ii), $1\notin \pi(S')$, as desired.

(ii) If $G\cong C_2\ltimes C_3$ and $S=(xy^{b})\bm\cdot (y^{c_1})^{[5]}\bm\cdot (y^{c_2})^{[2]}$, then as in (i), we have $1\notin\Pi_{6}(S)$. If $G\cong C_2\ltimes C_3$ and $S=x\bm\cdot xy \bm\cdot xy^{2}\bm\cdot 1^{[5]}$ then, by Theorem~\ref{inversed(G)}, we have $x\bm\cdot xy \bm\cdot xy^{2}$ is product-one free, implying $1\notin\Pi_{6}(S)$. \qed
\\

\noindent {\bf Acknowledgements}. This work was carried out during a visit by the first author to Brock University
as an international visiting scholar. He would like to sincerely thank the host institution for its hospitality
and for providing an excellent atmosphere for research. This work was supported in part by the National Science
Foundation of China (Grant No. 11701256, 11871258), the Youth Backbone Teacher Foundation of Henan's University (Grant No. 2019GGJS196), the China Scholarship Council (Grant No. 201908410132), and also in part by a discovery grant from the Natural Sciences and Engineering Research Council of Canada (Grant No. RGPIN 2017-03903).

\end{document}